\documentclass[12pt,a4paper,twoside,reqno]{amsart}
\usepackage[a4paper,top=3cm,bottom=3cm,inner=2.5cm,outer=2.5cm]{geometry}
\usepackage[utf8]{inputenc}
\usepackage{amssymb,amsthm,amsfonts}
\usepackage[british]{babel}
\usepackage{xparse}
\usepackage{multirow,enumitem,array,longtable}
\usepackage{pst-node,placeins,xspace,fix-cm}
\usepackage{accents}
\usepackage[nomessages]{fp}
\usepackage[colorlinks,linkcolor=blue,citecolor=violet,urlcolor=darkgray,final,hyperindex,linktoc=page,hyperfootnotes=true]{hyperref}
\usepackage{graphicx}
\usepackage{eucal}
\usepackage{amscd}
\usepackage[all,2cell]{xy}
\usepackage{tikz-cd}
\usepackage{quiver}
\usepackage{mathrsfs}

 \usepackage{tikz}
 \usetikzlibrary{positioning}
 \tikzset{mynode/.style={draw,circle,inner sep=1pt,outer sep=0pt}}

\newcolumntype{F}{>{$}c<{\hspace{-0.9ex}$}}
\newcolumntype{:}{>{$}m{0.8ex}<{$}}
\newcolumntype{R}{>{$}r<{$}}
\newcolumntype{C}{>{$}c<{$}}
\newcolumntype{L}{>{$}l<{$}}
\newcolumntype{N}{@{}>{$}l<{$}}
\newlength\horspace
\newcommand{\h}[1][1.0]{\hspace*{#1\horspace}}
\newlength\verspace
\newcommand{\vvv}[1]{\v{#1}}
\tikzset{iso/.style={draw=none,every to/.append style={edge node={node [sloped, allow upside down, auto=false]{$\cong$}}}}}
\tikzset{adjunction/.style={draw=none,every to/.append style={edge node={node [sloped, allow upside down, auto=false]{$\dashv$}}}}}
\tikzset{simeq/.style={draw=none,every to/.append style={edge node={node [sloped, allow upside down, auto=false]{$\simeq$}}}}}
\tikzset{simeqS/.style={draw=none,every to/.append style={edge node={node [sloped, allow upside down, auto=false]{$\raisebox{0.8em}{$\simeq$}$}}}}}
\tikzset{aiso/.style={simeqS,preaction={draw,->}}}
\tikzset{proarrowS/.style={draw=none,every to/.append style={edge node={node [sloped, allow upside down, auto=false]{\raisebox{1.4pt}{\small$\shortmid$}}}}}}
\tikzset{proarrow/.style={proarrowS,preaction={draw,->}}}
\tikzset{dotdot/.style={dash pattern=on 0.25ex off 0.2ex, dash phase=0ex}}
\tikzset{RightA/.style={double distance=3.5pt,>={Implies},->},%
	triple/.style={-,preaction={draw,RightA}},%
	quadruple/.style={preaction={draw,RightA,shorten >=0pt},shorten >=1pt,-,double,double distance=0.2pt}}

\newtheorem{teo}{Theorem}[section]
\newtheorem{theo}[teo]{Theorem}

\newtheorem{cor}[teo]{Corollary}

\newtheorem{lem}[teo]{Lemma}

\newtheorem{prop}[teo]{Proposition}

\theoremstyle{definition}
\newtheorem{defi}[teo]{Definition}
\usepackage{amsmath}
\newtheorem{defne}[teo]{Definition}

\newtheorem{rem}[teo]{Remark}
\newtheorem{remark}[teo]{Remark}
\newtheorem{exampl}[teo]{Example}

\newtheorem{assumption}[teo]{Assumption}
\newtheorem{cons}[teo]{Construction}
\newtheorem{notation}[teo]{Notation}

\def\nameit#1{\textrm{#1}~}
\def\thex{\nameit{Theorem}}
\def\prox{\nameit{Proposition}}
\def\corx{\nameit{Corollary}}
\def\lemx{\nameit{Lemma}}
\def\defx{\nameit{Definition}}
\def\remx{\nameit{Remark}}
\def\exax{\nameit{Example}}
\def\conx{\nameit{Construction}}
\def\dfn#1{{\itshape #1}}

\newcommand{\refs}[1]{\textup{(}\ref{#1}\textup{)}}

\def\T{\mathcal{T}}

\newdir{ |>}{{}*!/-3.5pt/@{|}*!/-8pt/:(1,-.2)@^{>}*!/-8pt/:(1,+.2)@_{>}}
\newcommand{\TT}{\mathbf{T}}
\newcommand{\EE}{\mathbb{E}}

\NewDocumentEnvironment{cd}{s O{7} O{7} b}{%
	\IfBooleanF{#1}{\begin{equation*}}\begin{tikzcd}[row sep=#2ex,column sep=#3ex,ampersand replacement=\&]
			#4
		\end{tikzcd}\IfBooleanF{#1}{\end{equation*}}\ignorespacesafterend}{}

\newenvironment{fun}{\[\begin{tabular}{F:RCL}}{\end{tabular}\]\ignorespacesafterend}
\newenvironment{eqD}[1]{\begin{equation}\label{#1}}{\end{equation}\ignorespacesafterend}
\newenvironment{eqD*}{\begin{equation*}}{\end{equation*}\ignorespacesafterend}

\def\:{\colon}

\newcommand{\p}[1]{\textup{(}{#1}\kern2pt\textup{)}}

\def\c{\circ}
\newcommand{\iso}{\cong}
\def\phi{\varphi}

\newcommand{\scaleu}[2][1.2]{{\scalebox{#1}{$#2$}}}

\newcommand{\st}{^{\ast}}

\DeclareFontFamily{OT1}{pzc}{}
\DeclareFontShape{OT1}{pzc}{m}{it}{<->s*[1.19]pzcmi7t}{}
\DeclareMathAlphabet{\mathpzc}{OT1}{pzc}{m}{it}

\DeclareFontFamily{U}{dutchcal}{\skewchar\font=45}
\DeclareFontShape{U}{dutchcal}{m}{n}{<->s*[1.05] dutchcal-r}{}
\DeclareMathAlphabet{\mathlcal}{U}{dutchcal}{m}{n}
\newcommand{\catfont}[1]{\ensuremath{\mathpzc{#1}}\xspace}

\newcommand{\A}{\catfont{A}}

\newcommand{\C}{\catfont{C}}

\newcommand{\E}{\catfont{E}}

\renewcommand{\1}{\catfont{1}}

\newcommand{\Set}{\catfont{Set}}

\newcommand{\Cat}{\catfont{Cat}}
\newcommand{\Catadj}{\catfont{Cat_{\,\operatorname{adj}}}}

\newcommand{\Mndmo}{\catfont{Mnd}}
\newcommand{\Mndco}{\catfont{Mnd}_{\operatorname{co}}}
\newcommand{\Mndcoadj}{\catfont{Mnd}_{\operatorname{co}}^{\operatorname{adj}}}
\newcommand{\GAMnd}[1]{\catfont{ActMnd}({#1})}

\newcommand{\Eq}{\operatorname{Eq}}

\NewDocumentCommand{\Fib}{t' t" t+ t? O{n} O{n} o}{
	\ensuremath{\ifx#5t{2\mbox{-}\Set\mbox{-}}\fi\catfont{\ifx#5d{D}\else{\ifx#5o{Op}\else{\ifx#5b{DOp}\else{\ifx#5t{Op}\else{\ifx#5c{Cl\h[-3]}\else{\ifx#5s{Sp}\fi}\fi}\fi}\fi}\fi}\fi{Fib}}\IfBooleanT{#3}{^{\h[3.7]\opn{s}\h[-3.2]}}\IfBooleanT{#4}{^{\h[3.7]\opn{P}\h[-3,7]}}{\IfBooleanTF{#1}{_{\h[0.4]\opn{cart}\ifx#6n{}\else{\h[-1.4],\h[0.4]{#6}}\fi}}{\IfBooleanTF{#2}{_{\h[0.4]\opn{clov}\ifx#6n{}\else{\h[-1.4],\h[0.4]{#6}}\fi}}{\ifx#6n{}\else{_{\h[0.4]{#6}}}\fi}}}\IfNoValueF{#7}{\h[-1]\left({#7}\right)}}
}

\NewDocumentCommand{\Sh}{o m}{
	\ensuremath{\catfont{Sh}\hspace{-0.15ex}\left({#2}\IfNoValueF{#1}{,{#1}}\right)}
}

\NewDocumentCommand{\St}{t+ o m}{
	\ensuremath{\catfont{St}\IfBooleanT{#1}{_{\opn{Ps}}}\hspace{-0.15ex}\left({#3}\IfNoValueF{#2}{,{#2}}\right)}
}

\newcommand{\Desc}{\catfont{Desc}}
\newcommand{\Alg}{\catfont{Alg}}

\newcommand{\Algs}[1]{\Alg({#1})}

\newcommand{\slice}[2]{{#1}/{#2}}

\newcommand{\opn}[1]{\operatorname{#1}}
\newcommand{\Hom}[3][]{\operatorname{Hom}_{\mkern1mu #1}\mkern-1.5mu\left({#2},{#3}\right)}
\newcommand{\HomC}[3]{{#1}\left({#2},\h[1]{#3}\right)}

\newcommand{\id}[1]{\operatorname{id}_{#1}}
\newcommand{\Id}[1]{\operatorname{Id}_{#1}}

\newcommand{\op}{\ensuremath{^{\operatorname{op}}}}

\newcommand{\x}[1][]{\h[-1]\times_{#1}\h[-1]}
\newcommand{\restr}[2]{{\left.\kern-\nulldelimiterspace {#1}\vphantom{\big|} \right|_{#2}}}

\newcommand{\dom}{\operatorname{dom}}
\newcommand{\cod}{\operatorname{cod}}

\newcommand{\Int}[1]{\ensuremath{\int \hspace{-0.35ex} #1}}

\newcommand{\Groth}[1]{\Int{#1}}

\newcommand{\too}{\longrightarrow}
\newcommand{\mto}{\mapsto}
\newcommand{\mtoo}{\longmapsto}
\newcommand{\ito}{\hookrightarrow}

\makeatletter
\newcommand{\aar}[2][]{\xrightarrow[#1]{#2}} 
\def\xlongrightarrowfill@{\arrowfill@\relbar\relbar\longrightarrow}
\newcommand{\arr}[2][]{%
	\ext@arrow 0099\xlongrightarrowfill@{#1}{#2}}
\newcommand{\aarr}[2][]{%
	\ext@arrow 0099\xlongrightarrowfill@{#1}{#2}} 
\def\xprorightarrowfill@{\arrowfill@{\relbar\joinrel\raisebox{0.6pt}{\small$\shortmid$}\joinrel{\relbar}}\relbar\rightarrow}
\newcommand{\aproarrow}[2][]{%
	\ext@arrow 0099\xprorightarrowfill@{#1}{#2}}

\newcommand{\aR}[2][]{%
	\ext@arrow 0055{\Rightarrowfill@}{#1}{#2}}
\def\xLongrightarrowfill@{\arrowfill@\Relbar\Relbar\Longrightarrow}
\newcommand{\aRR}[2][]{%
	\ext@arrow 0099\xLongrightarrowfill@{#1}{#2}}
\def\aitofill@{\arrowfill@{\lhook\joinrel\relbar}\relbar\rightarrow}
\newcommand{\aito}[2][]{%
	\ext@arrow 3095\aitofill@{#1}{#2}}
\def\Longaitofill@{\arrowfill@{\lhook\joinrel\relbar\joinrel\relbar}\relbar\rightarrow}
\newcommand{\aitoo}[2][]{%
	\ext@arrow 0099\Longaitofill@{#1}{#2}}

\def\xlongleftarrowfill@{\arrowfill@\longleftarrow\relbar\relbar}
\newcommand{\all}[2][]{%
	\ext@arrow 0099\xlongleftarrowfill@{#1}{#2}}
\newcommand{\aL}[2][]{%
	\ext@arrow 0055{\Leftarrowfill@}{#1}{#2}}
\def\xLongleftarrowfill@{\arrowfill@\Longleftarrow\Relbar\Relbar}
\newcommand{\aLL}[2][]{%
	\ext@arrow 0099\xLongleftarrowfill@{#1}{#2}}
\def\xmapstofill@{\arrowfill@{\mapstochar\relbar}\relbar\rightarrow}
\newcommand{\am}[2][]{%
	\ext@arrow 0395\xmapstofill@{#1}{#2}}
\def\xlongmapstofill@{\arrowfill@\relbar\relbar\longmapsto}
\newcommand{\amm}[2][]{%
	\ext@arrow 0399\xlongmapstofill@{#1}{#2}}

\newcommand{\eqq}{\DOTSB\protect\Relbar\protect\joinrel\Relbar}
\def\xeqqfill@{\arrowfill@\Relbar\Relbar\eqq}
\newcommand{\aeqq}[2][]{%
	\ext@arrow 0099\xeqqfill@{#1}{#2}}
\def\xRrightarrowfill@{\arrowfill@\equiv\equiv\Rrightarrow}
\newcommand{\aM}[2][]{\ext@arrow 0359\xRrightarrowfill@{#1}{#2}}
\newcommand{\Llongrightarrow}{%
	\DOTSB\protect\equiv\protect\joinrel\Rrightarrow}
\def\xLlongrightarrowfill@{\arrowfill@\equiv\equiv\Llongrightarrow}
\newcommand{\aMM}[2][]{%
	\ext@arrow 0099\xLlongrightarrowfill@{#1}{#2}}
\makeatother

\newcommand{\aiso}{\aar{\scriptstyle\simeq}}

\newcommand{\aequi}{\ensuremath{\stackrel{\raisebox{-1ex}{\kern-.3ex$\scriptstyle\sim$}}{\rightarrow}}}
\newcommand{\aequii}{\ensuremath{\stackrel{\raisebox{-1ex}{\kern-.3ex$\scriptstyle\sim$}}{\longrightarrow}}}

\newcommand{\PB}[1]{\arrow[#1,phantom,"\scalebox{1.6}{\color{black}$\lrcorner$}",very near start]}
\newcommand{\Ar}[4][]{\arrow[#2,"{#3}"{#1},""{name=#4, anchor=center}]}
\newcommand{\Ars}[4][]{\arrow[#2,"{#3}"'{#1},""{name=#4, anchor=center}]}
\newcommand{\Arb}[6][]{\arrow[#2,"{#3}"{#1},from=#4,to=#5,shorten <= #6 em, shorten >= #6 em]}
\newcommand{\Arbs}[6][]{\arrow[#2,"{#3}"'{#1},from=#4,to=#5,shorten <= #6 em, shorten >= #6 em]}

\NewDocumentCommand{\fib}{O{n} O{2.3} mmm}{%
	\begin{cd}*[#2][5]
		{#3}\ifx#1n{\arrow[d,"{\,\scaleu{#4}}"]}\else{\ifx#1i{\arrow[d,hookrightarrow,"{\,\scaleu{#4}}"]}\else{\ifx#1e{\arrow[d,equal,"{\,\scaleu{#4}}"]}\else{\ifx#1R{\arrow[d,Rightarrow,"{\,\scaleu{#4}}"]}\fi}\fi}\fi}\fi\\
		{#5}\ifx#1o{\arrow[u,"{\,\scaleu{#4}}"']}\fi
	\end{cd}\xspace
}
\NewDocumentCommand{\fibdiag}{O{n} O{2.3} mmm}{%
	\begin{cd}*[#2][5]
		{#3}\ifx#1n{\arrow[d,"{\,{#4}}"]}\else{\ifx#1i{\arrow[d,hookrightarrow,"{\,{#4}}"]}\else{\ifx#1e{\arrow[d,equal,"{\,{#4}}"]}\else{\ifx#1R{\arrow[d,Rightarrow,"{\,{#4}}"]}\fi}\fi}\fi}\fi\\
		{#5}\ifx#1o{\arrow[u,"{\,{#4}}"']}\fi
	\end{cd}\xspace
}
\NewDocumentCommand{\sq}{s O{n} O{7} O{7} O{} O{2.7} O{2.2} O{0.5} O{n}}{%
	\def\foosq##1##2##3##4##5##6##7##8{%
		\IfBooleanTF{#1}{\begin{cd}*}{\begin{cd}}[#3][#4]
				{##1}\ifx#2p{\PB{rd}}\fi\arrow[r,"{##5}"]\ifx#9l{\arrow[d,equal,"{##6}"']}\else{\arrow[d,"{##6}"']}\fi\&{##2}\ifx#9r{\arrow[d,equal,"{##7}"]}\else{\arrow[d,"{##7}"]}\fi\ifx#2l{\arrow[ld,Rightarrow,shorten <=#6ex,shorten >=#7ex,"{#5}"{pos=#8}]}\fi\\
				{##3}\ifx#9d{\arrow[r,equal,"{##8}"']}\else{\arrow[r,"{##8}"']}\fi\ifx#2o{\arrow[ur,Rightarrow,shorten <=#6ex,shorten >=#7ex,"{#5}"{pos=#8}]}\fi\&{##4}
		\end{cd}}%
		\foosq}
\NewDocumentCommand{\sqs}{s O{n} O{7} O{7} O{} O{} O{} O{}}{%
	\def\foosqs##1##2##3##4##5##6##7##8{%
		\IfBooleanTF{#1}{\begin{cd}*}{\begin{cd}}[#3][#4]
				{##1}\ifx#2p{\PB{rd}}\fi\arrow[r,"{##5}"#5]\arrow[d,"{##6}"'#6]\&{##2}\arrow[d,"{##7}"#7]\\
				{##3}\arrow[r,"{##8}"'#8]\&{##4}
		\end{cd}}%
		\foosqs}
\NewDocumentCommand{\nat}{s O{n} O{7} O{7} O{2.7} O{2.2} O{0.5} O{n}}{%
	\def\foonat##1##2##3##4##5##6{%
		\IfBooleanTF{#1}{\sq*}{\sq}[#2][#3][#4][{##1}_{##4}][#5][#6][#7][#8]{{##2}\ifx#8l{}\else{({##5})}\fi}{{##3}\ifx#8r{}\else{({##5})}\fi}{{##2}\ifx#8l{}\else{({##6})}\fi}{{##3}\ifx#8r{}\else{({##6})}\fi}{{##1}_{##5}}{\ifx#8l{}\else{{##2}({##4})}\fi}{\ifx#8r{}\else{{##3}({##4})}\fi}{{##1}_{##6}}}%
	\foonat}
\NewDocumentCommand{\tr}{s O{4.5} O{6.5} O{0} O{0} O{n} O{0} O{} O{0}}{%
	\def\footr##1##2##3##4##5##6{%
		\IfBooleanTF{#1}{\begin{cd}*}{\begin{cd}}[#3][#2]
				{##1}\arrow[rr,"{##4}"]
				\Ars[inner sep =0.2ex]{dr}{##5}{A}\&[#4ex]\&[#5ex]{##2}\Ar[inner sep =0.2ex]{ld}{##6}{B}\\
				\&{##3}
				\ifx#6l{\Arb{Rightarrow,shift right=#7em}{#8}{A}{B}{#9}}\else{\ifx#6o{\Arbs{Rightarrow,shift right=#7em}{#8}{B}{A}{#9}}\else{\ifx#6i{\Arbs[inner sep=0.9ex]{iso,shift right=#7em}{#8}{A}{B}{#9}}\else{\ifx#6e{\Arb{equal,shift right=#7em}{#8}{A}{B}{#9}}\else{}\fi}\fi}\fi}\fi
		\end{cd}}%
		\footr}
\NewDocumentCommand{\trslice}{s t+ O{7} O{7}}{%
	\def\footrslice##1##2##3##4##5##6{%
		\IfBooleanTF{#1}{\begin{cd}*}{\begin{cd}}[#3][#4]
				{##1}\arrow[r,"{##4}"]\IfBooleanF{#2}{\arrow[d,"{##5}"']}\&{##2}\\
				{##3}\IfBooleanT{#2}{\arrow[u,"{##5}"]}\arrow[ru,"{##6}"']
		\end{cd}}%
		\footrslice}
\NewDocumentCommand{\tc}{s t+ O{7} O{30} O{} O{} O{} o}{
	\def\footc##1##2##3##4##5{%
		\FPmul\Mulresulttwo{#3}{#3}%
		\FPmul\Mulresult{0.0026}{\Mulresulttwo}%
		\IfBooleanTF{#1}{\begin{cd}*}{\begin{cd}}[#3][#3]
				{##1}\Ar[#5]{r,bend left=#4}{##3}{A}\Ars[#6]{r,bend right=#4}{##4}{B}\&{##2}
				\IfBooleanTF{#2}{\Arb[description,pos=0.49]}{\Arb}{Rightarrow #7}{\mkern1mu {##5}}{A}{B}{\IfNoValueTF{#8}{\Mulresult}{#8}}
		\end{cd}}%
		\footc}
\NewDocumentCommand{\tcwl}{s t+ O{7} O{30} O{} O{} O{} o O{-2}}{
	\def\footcwl##1##2##3##4##5##6##7{%
		\FPmul\Mulresulttwo{#3}{#3}%
		\FPmul\Mulresult{0.0026}{\Mulresulttwo}%
		\IfBooleanTF{#1}{\begin{cd}*}{\begin{cd}}[#3][#3]
				##6 \arrow[r,"{##7}"]\&[#9ex]{##1}\Ar[#5]{r,bend left=#4}{##3}{A}\Ars[#6]{r,bend right=#4}{##4}{B}\&{##2}\IfBooleanTF{#2}{\Arb[description,pos=0.49]}{\Arb}{Rightarrow #7}{\mkern1mu {##5}}{A}{B}{\IfNoValueTF{#8}{\Mulresult}{#8}}
		\end{cd}}%
		\footcwl}
\NewDocumentCommand{\tcwr}{s t+ O{7} O{30} O{} O{} O{} o O{-2}}{
	\def\footcwr##1##2##3##4##5##6##7{%
		\FPmul\Mulresulttwo{#3}{#3}%
		\FPmul\Mulresult{0.0026}{\Mulresulttwo}%
		\IfBooleanTF{#1}{\begin{cd}*}{\begin{cd}}[#3][#3]
				{##1}\Ar[#5]{r,bend left=#4}{##3}{A}\Ars[#6]{r,bend right=#4}{##4}{B}\&{##2}\arrow[r,"{##7}"]\&[#9ex]##6\IfBooleanTF{#2}{\Arb[description,pos=0.49]}{\Arb}{Rightarrow #7}{\mkern1mu {##5}}{A}{B}{\IfNoValueTF{#8}{\Mulresult}{#8}}
		\end{cd}}%
		\footcwr}
\NewDocumentCommand{\tcv}{s t' O{7} O{30} mmmmm}{
	\FPmul\Mulresulttwo{#3}{#3}%
	\FPmul\Mulresult{0.0026}{\Mulresulttwo}%
	\IfBooleanTF{#1}{\begin{cd}*}{\begin{cd}}[#3][#3]
			{#5}\IfBooleanTF{#2}{\Ars{d,leftarrow,bend right=#4}{#7}{A}\Ar{d,leftarrow,bend left=#4}{#8}{B}}{\Ars{d,bend right=#4}{#7}{A}\Ar{d,bend left=#4}{#8}{B}}\\{#6}
			\Arb{Rightarrow}{#9}{A}{B}{\Mulresult}
		\end{cd}}
\NewDocumentCommand{\twonats}{s O{2.2} O{8} O{7} O{1.05} O{3.45} O{2}}{%
	\def\footwonats##1##2##3##4##5##6##7##8##9{%
		\def\foofootwonats####1####2####3####4####5{%
			\IfBooleanTF{#1}{\begin{cd}*}{\begin{cd}}[#3][#4]
					##1 \Ar{r}{##9}{} \Ars{d,bend right=40}{##5}{A} \Ar{d,bend left=40}{##6}{B} \&
					##2 \Ars{d,bend left}{##8}{Q} \arrow[ld,Rightarrow,shift left=#7,"{####4}"{pos=0.48},shorten <=#5ex, shorten >=#6ex]\&[-2ex]
					##1 \Ar{r}{##9}{} \Ar{d,bend right}{##5}{R} \&
					##2 \Ars{d,bend right=40}{##7}{C} \Ar{d,bend left=40}{##8}{D} \arrow[ld,Rightarrow,shift right=#7,"{####5}"'{pos=0.52},shorten <=#6ex, shorten >=#5ex] \\
					##3 \Ars{r}{####1}{} \&
					##4 \&
					##3 \Ars{r}{####1}{} \& 
					##4
					\Arbs{Rightarrow}{\,{####2}}{B}{A}{0.3}
					\Arbs{Rightarrow}{\,{####3}}{D}{C}{0.3}
					\Arb{equal}{}{Q}{R}{#2}
			\end{cd}}%
			\foofootwonats}\footwonats}

\newcommand{\comp}{\operatorname{comp}}
\newcommand{\Act}[2][]{\mathbf{Act}_{#1}(#2)}
\newcommand{\Actp}[3][]{\mathbf{Act}_{#1}^{#2}(#3)}
\newcommand{\DD}{\mathbb{D}}
\newcommand{\BB}{\mathbb{B}}

\begin{document}

\title{
Generalized action monads and descent
}

\author{Elena Caviglia}
\address[Elena Caviglia]{Department of Mathematical Sciences, Stellenbosch University, Stellenbosch 7600, South Africa, and National Institute for Theoretical and Computational Sciences, South Africa.}
\email{elena.caviglia@outlook.com}
\author{Sophie Marques}
\address[Sophie Marques]{Department of Mathematics, School of Sciences, University of Minho, 4710-057 Braga, Portugal, and Centre of Mathematics (CMAT), University of Minho, Portugal.}
\email{sophie.marques@math.uminho.pt}
\author{Luca Mesiti}
\address[Luca Mesiti]{Department of Mathematical Sciences, Stellenbosch University, Stellenbosch 7600, South Africa.}
\email{luca.mesiti@outlook.com}

\keywords{action, monad, internal category, descent}
\subjclass[2020]{18C15, 18D40, 18F20}
\date{}
\begin{abstract}
    We develop a monadic approach to actions of internal categories. Given an internal category, we construct a monad on a slice whose algebras are the actions of that internal category. Then, we give a complete characterization of the monads that arise in this way; we call them generalized action monads. Finally, we prove that these monads yield a well-behaved notion of generalized descent.
\end{abstract}

\maketitle
\setcounter{tocdepth}{1}
\tableofcontents

\section{Introduction}

Actions of internal groups are one of the most fruitful and widely used categorical concepts in algebra and geometry. They successfully abstract the concept of symmetry of a set to general categories, unifying continuous group actions, smooth group actions, and algebraic group actions, among others. In \cite{JanelidzeTholenII}, an important generalization of actions of internal groups is proposed: the notion of action of an internal category. This considerably widens the examples, as internal categories are a powerful notion capturing topological categories, crossed modules, Lie groupoids, double categories and many more mathematical structures \cite{Borceux2,KellyStreet2Cat,StreetFormalTheory}. Internal groups are particular internal categories, with object of objects given by the terminal object. So the theory of actions of internal categories really generalizes that of actions of internal groups. Interestingly, internally to the category of sets, actions of internal categories capture another of the most fruitful and widely used categorical concepts: that of a (co)presheaf (see \cite{JanelidzeTholenII}). So general actions of internal categories may be thought of as internal presheaves.

A key theorem in categorical algebra is that actions of internal groups can be captured as algebras for a monad. More precisely, the actions of an internal group $G$ in a category $\C$ are the algebras for the monad $G\x -$ on $\C$. We generalize this fundamental theorem to the setting of actions of internal categories. This raises a natural problem: the usual action monad of a group object acts on the ambient category itself, but an internal category has many objects. This suggests considering slices of the ambient category. Given an internal category $\DD=($\begin{cd}*
    D_1 \arrow[r,shift left=1.7,"s"]\arrow[r,shift right=1.7,"t"']\& D_0 \arrow[l,"i"{description}]
\end{cd}$)$ in a category $\C$, we construct a monad on the slice over the object of objects $D_0$, whose algebras are the actions of the internal category $\DD$. We call such monads \emph{generalized action monads}. The theorem has important consequences; for example, we apply this to prove the functoriality of taking the category of actions of an internal category. The theorem also has useful applications to descent, as we can use actions of internal categories to capture descent data.

We then investigate the properties and structures of generalized action monads. We investigate how much of an internal category is remembered by its action monad. The objective is to understand what extra structure on the monad is needed to successfully reconstruct the internal category. We arrive at a complete characterization of which monads arise as generalized action monads associated to an internal category. So that we have an equivalence of categories between internal categories and monads with appropriate extra structures. Interestingly, the analogue of the fact that, given an internal group $G$, every object can be equipped with a trivial $G$-action plays a major role in the characterization of generalized action monads. As it was pointed out to us, our equivalence of categories between internal categories and monads with extra structures could also be recovered by the general theory developed in \cite{Leinster} for $T$-multicategories, choosing $T=\Id{}$. By focusing on internal categories, we realize that the corresponding monads generalize the well-known action monads for internal group actions. Moreover, in the case of internal categories it is not necessary to require cartesian monads a priori.

Furthermore, we develop a fibrational generalization of our constructions. We explain that many of our results work relatively to a general bifibration satisfying the Beck-Chevalley condition at pullback squares rather than just to the codomain bifibration. In recent years, similar endeavours have been proved to be very fruitful. And this generalization is very useful for our applications.

Finally, we apply our results to descent theory. Classical
descent along a morphism asks when an object can be reconstructed from local data satisfying compatibility conditions on overlaps, which are captured by the kernel pair of the starting morphism. B\'{e}nabou and Roubaud \cite{BenabouRoubaud} then managed to capture this theory of descent from a monadic perspective. The category of descent data is precisely the category of Eilenberg--Moore algebras of a monad associated to the starting morphism $p$, given by the composite $p^\ast p_!$ where $p^\ast$ is pulling back along $p$ and $p_!$ is pushing forward along $p$ (in the case of the codomain bifibration, $p_!$ is simply postcomposition by $p$). And the comparison functor of classical descent theory is captured by the comparison functor of monad theory (see \cite{Beck}). Monadic aspects of descent have then been extensively explored, see for example \cite{StreetFormalTheory,StreetDescent,JanelidzeTholenI,JanelidzeTholenII,NunesPseudoKan,NunesDescentKan,NunesSemanticFactorization}.

Our applications to descent naturally embed into this general monadic framework. Rather than considering any monad to yield a notion of descent, we restrict ourselves to generalized action monads. This has the advantage of yielding a notion of generalized descent that satisfies better properties than those of general monadic descent. In particular, we construct a comparison functor using the extra structure of a generalized action monad. And we construct a realization functor that is left adjoint to the comparison functor using coequalizers, in a similar way to the realization functor of classical descent along a morphism. The idea is that every internal category, equivalently corresponding to a generalized action monad, encodes the instructions for the compatibility that the local data need to satisfy. Classical descent along a morphism is then captured by the \vvv{C}ech internal groupoid, built using the kernel pair of the starting morphism. The usual descent monad is precisely the generalized action monad associated to this internal
groupoid. By using internal categories to encode the compatibility instructions for descent, we can then also simultaneously capture Galois descent.

Galois descent was brought forward by \cite{MarquesGaloisDescent} following the standard point of view in categorical approaches to descent theory to consider the compatibilities on local data to be given by symmetries \cite{JanelidzeTholenI,JanelidzeTholenII,StreetDescent}. It is a notion of descent associated to actions of group objects. The compatibilities are encoded into an action groupoid associated to the starting action of a group object. This motivated us to develop notions of descent associated to actions of internal categories, with the aim to capture at the same time classical descent along a morphism and Galois descent, while enjoying as many as possible of the good properties that those notions of descent satisfy.

Another general descent framework in which our theory embeds is that of lax descent objects, brought forward by Lucatelli Nunes \cite{NunesDescentKan}. There, the starting point is an internal precategory, and its image along a pseudofunctor into $\Cat$. The (lax) 2-dimensional limit of the obtained diagram in $\Cat$ then represents a general category of descent data. As explained in \cite{NunesDescentKan}, when considering the pseudofunctor into $\Cat$ that takes slices of the domain category, the obtained category of descent data coincides with the category of actions of the starting internal category. But the theory presented in this paper, associated to generalized action monads, enjoys better properties. In fact, we have monadicity of the category of descent data, while the general framework of \cite{NunesDescentKan} lacks monadicity.

So generalized action monads provide an intermediate well-behaved notion of generalized descent that is still general enough to simultaneously capture both the classical descent along a morphism and Galois descent. In this sense, generalized action monads stand in a sweet spot for descent theory.

\subsection*{Outline of the paper}

The paper is organized as follows.  Section~\ref{secprelim} recalls the
background on monads and their morphisms, internal categories and their actions, classical and
Galois descent, and a useful pullbacks lemma. Section~\ref{secmonadicity}
constructs the action monad of an internal category and identifies its
algebras with internal actions. Then Section~\ref{sec:functoriality} applies the monadicity result to prove functoriality of the action construction. In Section~\ref{sec:chargenactmnd}, we completely characterize generalized action monads, presenting a reconstruction theorem for internal categories. In Section~\ref{sec:general-fibrational-setting}, we then develop a fibrational generalization of the theory. In Section~\ref{sec:monadic-descent-right-modules}, we apply our theory to descent. We present a notion of monadic descent associated to generalized action monads, or equivalently to internal categories. We also show how to recover classical descent along a morphism and Galois descent. Finally, in Section~\ref{sec:ex}, we illustrate some further examples.

\subsection*{Notations}

Throughout this paper, we fix $\C$ a category with finite limits. 

Moreover, we will often denote morphisms in slice categories in the same way as their underlying morphisms in the category.

\section{Preliminaries}
\label{secprelim}

In this section, we recall some preliminaries and notations that we will use throughout the paper. The guiding point is that the descent constructions considered in
this paper will be expressed by comparison functors into categories of algebras for suitable monads.

We recall monads and the two kinds of morphisms between them. We then recall internal categories and their actions. The category
of actions will be useful later in this paper because it will give the category of descent data.

We review classical descent along a morphism, following the monadic approach derived by \cite{Beck,BenabouRoubaud,StreetDescent}. We also recall the Galois form of descent, associated with actions of group objects. Later in the paper, we will merge this form of descent with the classical descent along a morphism.

Finally, we recall a useful pullbacks lemma that will be used in the following sections.

\subsection{Monad morphisms, comorphisms, and algebras}
\label{subsec:monad-morphisms-comorphisms} For monads and Eilenberg--Moore algebras, we refer the reader to \cite{Borceux2,StreetFormalTheory}. Given a monad $T$ on $\mathcal E$, we write $\Alg(T)$ for the
Eilenberg--Moore category of $T$-algebras. Thus an object of $\Alg(T)$ is a
pair $(X,a)$, where $X\in\mathcal E$ and $a:TX\to X$ is a $T$-algebra
structure, and morphisms are $T$-algebra morphisms.

We recall here that monads give rise to two categories, depending on the direction of the 2-cell considered in the morphisms between them. Both the directions will be useful later in the paper. With the convention
used here, \emph{morphisms of monads} are those that induce functors between Eilenberg--Moore categories. \emph{Comorphisms} induce functors between the Kleisli categories, but they also induce functors between Eilenberg--Moore categories assuming that their underlying functor is a left adjoint. The idea is that we can turn such comorphisms of monads into morphisms of monads, via the calculus of mates.

We will often write monads as pairs $(\mathcal E,T)$,
where $\mathcal E$ is a category and $T$ is a monad on $\mathcal E$. We will need morphisms that also allow us to change the base category of monads.

\begin{defi}\label{defmormnd}
    A \emph{morphism of monads} $(\mathcal E,T)\longrightarrow(\mathcal C,S)$ consists of a pair $(F,\lambda)$ where $F:\mathcal E\to\mathcal C$ is a functor (called the \emph{underlying functor}) and $\lambda$ is a natural transformation
    \begin{cd}
        {\mathcal{E}} \& {\mathcal{E}} \\
	{\mathcal{C}} \& {\mathcal{C}}
	\arrow["T", from=1-1, to=1-2]
	\arrow["F"', from=1-1, to=2-1]
	\arrow["F", from=1-2, to=2-2]
	\arrow["\lambda"'{inner sep=0.35ex}, Rightarrow, from=2-1, to=1-2,shorten <=2ex,shorten >=2ex]
	\arrow["S"', from=2-1, to=2-2]
    \end{cd}
    such that the following axioms hold:
    \begin{eqD*}
        \begin{cd}*
        {\mathcal{E}} \& {\mathcal{E}} \\
	{\mathcal{C}} \& {\mathcal{C}}
	\arrow["T", from=1-1, to=1-2]
	\arrow["F"', from=1-1, to=2-1]
	\arrow["F", from=1-2, to=2-2]
	\arrow["\lambda"'{inner sep=0.35ex}, Rightarrow, from=2-1, to=1-2,shorten <=2ex,shorten >=2ex]
	\arrow[""{name=0, anchor=center, inner sep=0}, "S", from=2-1, to=2-2]
	\arrow[""{name=1, anchor=center, inner sep=0}, "{\Id{\mathcal{C}}}"', curve={height=30pt}, from=2-1, to=2-2]
	\arrow["{\eta^S}"', between={0.2}{0.8}, Rightarrow, from=1, to=0]
    \end{cd} \quad = \quad 
    \begin{cd}*
        {\mathcal{E}} \& {\mathcal{E}} \\
	{\mathcal{C}} \& {\mathcal{C}}
	\arrow[""{name=0, anchor=center, inner sep=0}, "T", curve={height=-30pt}, from=1-1, to=1-2]
	\arrow[""{name=1, anchor=center, inner sep=0}, "{\Id{\mathcal{E}}}"', from=1-1, to=1-2]
	\arrow["F"', from=1-1, to=2-1]
	\arrow["F", from=1-2, to=2-2]
	\arrow["{\Id{\mathcal{C}}}"', from=2-1, to=2-2]
	\arrow["{\eta^T}"', between={0.2}{0.8}, Rightarrow, from=1, to=0]
    \end{cd}
    \end{eqD*}
    \begin{eqD*}
    \begin{cd}*
        {\mathcal{E}} \&\& {\mathcal{E}} \\
	{\mathcal{C}} \&\& {\mathcal{C}} \\[-4ex]
	\& {\mathcal{C}}
	\arrow["T", from=1-1, to=1-3]
	\arrow["F"', from=1-1, to=2-1]
	\arrow["F", from=1-3, to=2-3]
	\arrow["\lambda", Rightarrow, from=2-1, to=1-3,shorten <=4ex,shorten >=4ex]
	\arrow[""{name=0, anchor=center, inner sep=0}, "S", from=2-1, to=2-3]
	\arrow["S"', from=2-1, to=3-2]
	\arrow["S"', from=3-2, to=2-3]
	\arrow["{\mu^S}"', between={0}{0.8}, Rightarrow, from=3-2, to=0,shorten <=0.5ex]
    \end{cd} \quad = \quad
        \begin{cd}*
        {\mathcal{E}} \& {\mathcal{E}} \& {\mathcal{E}} \\
	{\mathcal{C}} \& {\mathcal{C}} \& {\mathcal{C}}
	\arrow["T", from=1-1, to=1-2]
	\arrow[""{name=0, anchor=center, inner sep=0}, "T", curve={height=-30pt}, from=1-1, to=1-3]
	\arrow["F"', from=1-1, to=2-1]
	\arrow["T", from=1-2, to=1-3]
	\arrow["F", from=1-2, to=2-2]
	\arrow["F", from=1-3, to=2-3]
	\arrow["\lambda"'{inner sep=0.35ex}, Rightarrow, from=2-1, to=1-2,shorten <=2ex,shorten >=2ex]
	\arrow["S"', from=2-1, to=2-2]
	\arrow["\lambda"'{inner sep=0.35ex}, Rightarrow, from=2-2, to=1-3,shorten <=2ex,shorten >=2ex]
	\arrow["S"', from=2-2, to=2-3]
	\arrow["{\mu^T}", between={0}{0.8}, Rightarrow, from=1-2, to=0]          
        \end{cd}
    \end{eqD*}
    where $\eta^T,\mu^T$ and $\eta^S,\mu^S$ denote the units and multiplications of the monads $T$ and $S$.
\end{defi}

We denote as $\Mndmo$ the category of monads and morphism of monads.

Morphisms of monads $(F,\lambda)$ enjoy the following property. If $(X,a)$ is a $T$-algebra, then
$FX$ becomes an $S$-algebra, exhibited by the composite
$$
SFX\xrightarrow{\lambda_X}FTX\xrightarrow{F a}FX.
$$
Thus a morphism $(F,\lambda):(\mathcal E,T)\to(\mathcal C,S)$ induces a
functor
$$
\Alg(T)\longrightarrow\Alg(S).
$$

\begin{defi}
A \emph{comorphism of monads} $(\mathcal E,T)\longrightarrow(\mathcal C,S)$ consists of a pair $(F,\chi)$ where $F:\mathcal E\to\mathcal C$ is a functor (called the \emph{underlying functor}) and $\chi$ is a natural transformation
\begin{cd}
    {\mathcal{E}} \&{\mathcal{E}} \\
	{\mathcal{C}} \&{\mathcal{C}}
	\arrow["T", from=1-1, to=1-2]
	\arrow["F"', from=1-1, to=2-1]
	\arrow["\chi"'{inner sep=0.35ex}, Rightarrow, from=1-2, to=2-1,shorten <=2ex, shorten >=2ex]
	\arrow["F", from=1-2, to=2-2]
	\arrow["S"', from=2-1, to=2-2]
\end{cd}
such that dual axioms to the ones depicted in \defx\ref{defmormnd} hold.
\end{defi}

We denote as $\Mndco$ the category of monads and comorphisms of monads. 

A comorphism of monads does not by itself induce a functor between Eilenberg--Moore
categories, unlike morphisms of monads. However, it does
so when its underlying functor is a left adjoint. We therefore denote by
$\Mndcoadj$ the subcategory of $\Mndco$ whose morphisms are those
comorphisms of monads $(F,\chi):(\mathcal E,T)\longrightarrow(\mathcal C,S)
$ for which $F$ is a left adjoint.

Let such a comorphism be given, and choose an adjunction
$$
F:\mathcal E\rightleftarrows\mathcal C:G.
$$
Let $\iota:\Id{\mathcal E}\Rightarrow GF$ and
$\varepsilon:FG\Rightarrow\Id{\mathcal C}$ be the unit and counit of this
adjunction. The comorphism structure natural transformation $\chi:F T\Rightarrow S F$ has a mate
$$
\chi^\sharp\:T G\Longrightarrow G S,
$$
given at $Y\in\mathcal C$ by the composite
$$
TGY
\xrightarrow{\iota_{TGY}}
GFTGY
\xrightarrow{G\chi_{GY}}
GSFGY
\xrightarrow{GS\varepsilon_Y}
GSY.
$$
The usual calculus of mates shows that $\chi^\sharp$ is compatible with the
units and multiplications. Hence
$$
(G,\chi^\sharp):(\mathcal C,S)\longrightarrow(\mathcal E,T)
$$
is a morphism of monads in $\Mndmo$.

It follows that an arrow in $\Mndcoadj$ induces a functor between
Eilenberg--Moore categories in the opposite direction. Namely, the above
comorphism induces a functor $
\Alg(S)\longrightarrow\Alg(T)$, sending an $S$-algebra $(Y,b:S Y\to Y)$ to the $T$-algebra
$$
\bigl(GY,\;G b\circ\chi^\sharp_Y:TGY\to GY\bigr).
$$
After choosing right adjoints for the left adjoints occurring in
$\Mndcoadj$, this yields a functor
$$
\Alg(-):(\Mndcoadj)^{\op}\longrightarrow\Cat.
$$

\subsection{Internal categories and actions}

Let $\C$ be a category with pullbacks. Internal
categories and internal functors in $\C$ are standard notions; see, for
example, \cite{Borceux2,KellyStreet2Cat,StreetFormalTheory}. We write
$\Cat(\C)$ for the category of internal categories in $\C$ and internal
functors between them. An object of $\Cat(\C)$ will often be denoted
$
\DD=(D_1\rightrightarrows D_0),
$
with source and target maps $s,t:D_1\to D_0$, identity map
$e:D_0\to D_1$, and composition map
$
m:D_1\times_{s,D_0,t}D_1\to D_1.
$
The pullback $D_1\times_{s,D_0,t}D_1$ abstractly captures composable pairs of morphisms $(g,f)$.

\cite{JanelidzeTholenII} introduced a very interesting notion: that of action of an internal category. This generalizes the well-known actions of group objects and opens the way to a broader theory of actions. Actions of internal categories will be one of the central notions of this paper, and a key element for our applications to descent theory.

\begin{defi}\label{defactint}
Let $\DD=(D_1\rightrightarrows D_0)$ be an internal category in $\C$, with structure maps $s,t:D_1\to D_0$, $e:D_0\to D_1$, and $m:D_1\times_{s,D_0,t}D_1\to D_1$ as described above.

A \emph{(left) action} of $\DD$ is a triple $(C,\gamma,\xi)$ consisting of
\begin{itemize}
\item an object $C$ of $\C$;
\item a morphism $\gamma:C\to D_0$;
\item a morphism
$$
\xi:D_1\times_{s,D_0,\gamma} C\longrightarrow C
$$
\end{itemize}
such that the following three axioms are satisfied:

\begin{enumerate}
\item The action lies over the target map:
$$
\gamma\xi=t\pi_1,
$$
where $\pi_1:D_1\times_{s,D_0,\gamma}C\to D_1$ is the first projection.

\item Identity arrows act trivially:
$$
\xi(e\gamma,\id{C})=\id{C}.
$$
\item The action is compatible with composition:
$$
\xi(m,\id{C})=\xi(\id{D_1},\xi).
$$
\end{enumerate}

In elementwise notation, if $f:y\to x$, $g:x\to z$, and $\gamma(c)=y$, then
the last axiom says
$$
\xi(gf,c)=\xi(g,\xi(f,c)).
$$
\end{defi}

\begin{defi}
Let $(C,\gamma,\xi)$ and $(C',\gamma',\xi')$ be actions of $\DD$. A
\emph{morphism of actions}
$$
h:(C,\gamma,\xi)\longrightarrow(C',\gamma',\xi')
$$
is a morphism $h:C\to C'$ in $\C$ such that $\gamma'h=\gamma$ and
$$
h\xi=\xi'(\id{D_1}\times h).
$$
Thus $h$ is a morphism over $D_0$ which commutes with the action of every
arrow of $\DD$.
\end{defi}

Actions of $\DD$ and morphisms of actions form a category, denoted $\Act[\C]{\DD}$.

\begin{exampl}
    Every group object $G$ in a category $\C$ with a terminal object $1$ can be seen as a special kind of internal category in $\C$, for which the object of objects $D_0$ coincides with $1$. It is straightforward to see that actions of $G$ seen as an internal category precisely coincides with the usual actions of $G$ seen as a group object. In particular, the pullback $D_1\times_{s,D_0,\gamma} C$ simplifies to the product $G\x C$.
\end{exampl}

\begin{exampl}\label{exaactionsofinternalcatinSet}
Let $\DD$ be a small category, regarded as an internal category in $\Set$.
Then an action of $\DD$ is a set $C$ equipped with a map
$\gamma:C\to D_0$ and, for each arrow $f:y\to x$ of $\DD$, a function
$$
\xi_f:C_y\longrightarrow C_x,
$$
where $C_x=\gamma^{-1}(x)$. The identity and associativity axioms say exactly
that
$$
\xi_{\id{x}}=\id{C_x}
\qquad\text{and}\qquad
\xi_{gf}=\xi_g\xi_f.
$$
It is well-known (see \cite{JanelidzeTholenII}) that an action of $\DD$ in $\Set$ is the same thing as a copresheaf
$\DD\to\Set$.

We have an equivalence of categories
$$
\Act[\Set]{\DD}\simeq \Set^\DD.
$$
Of course one can obtain presheaves as well by taking actions of the opposite category $\DD$.

This example lets us think of actions of internal categories in general as a notion of internal copresheaves. 
\end{exampl}

\subsection{Classical descent along a morphism}

Descent theory studies the following question: when can an object be
reconstructed from a local form together with compatibility data? In the
classical situation, the local form is obtained by pulling back along a
morphism. The compatibility data live on the self-overlap of that morphism.
This is the \vvv{C}ech point of view on descent; see
\cite{BenabouRoubaud,Beck,JanelidzeTholenI,JanelidzeTholenII,StreetDescent}. Thanks to theory developed by B\'{e}nabou--Roubaud \cite{BenabouRoubaud}, the classical theory of descent along a morphism has been captured from a monadic point of view. We recall below the main ideas of this approach, as this is the point of view that is the most useful in this paper.

Let $\C$ be a category with pullbacks, and let $p\:E\to B$ be a morphism in $\C$.
Pullback along $p$ gives a functor
$$
p^\ast:\slice{\C}{B}\longrightarrow \slice{\C}{E}.
$$
An object over $B$ may therefore be inspected locally over $E$. The question
is whether an object over $E$ is obtained by pullback from an object over $B$. The self-overlap of $E$ over $B$, that expresses the compatibility of the local form, is the kernel-pair $E\times_B E$ of $p$. Call $\pi_1$ and $\pi_2$ the pullback projections.

A key element for descent along a morphism $p$ is that the pullback functor $p^\ast$ has a left adjoint  
$$
p_!:\slice{\C}{E}\longrightarrow \slice{\C}{B},
$$
given by post-composition with $p$. The adjunction $
p_!\dashv p^\ast$ then gives rise to a monad on $\slice{\C}{E}$:
$$
T_p=p^\ast p_!.
$$
Using the pullbacks lemma, it is straightforward to see that, equivalently, the monad $T_p$ is given by
$$
T_p=\pi_{1!}\pi_2^\ast.
$$
Indeed, given $\gamma:X\to E$ an object of $\slice{\C}{E}$, we have that
$$
T_p(X)
=
(E\times_B E)\times_{\pi_2,E,\gamma}X,
$$
viewed as an object over $E$ through the first projection
$\pi_1:E\times_B E\to E$.

\begin{defi}
The category of descent data along the morphism $p$ is
$$
\Desc(p)=\Alg(T_p).
$$
Thus a descent datum is an object $\gamma:X\to E$ together with a
$T_p$-algebra structure
$$
\theta:T_p(X)\longrightarrow X.
$$
\end{defi}

Unwinding the definition, $\theta$ is the operation which compares the two
pullbacks of $X$ to the overlap $E\times_B E$. The unit axiom says that the
comparison is the identity along the diagonal $E\to E\times_B E$. The
multiplication axiom says that the comparisons are coherent on the triple
overlap $E\times_B E\times_B E$. In the usual notation, this is the cocycle
condition: going directly from the first copy to the third copy agrees with
going through the second.

Every object over $B$ has a canonical descent datum. Indeed, if $Y\to B$ is
given, then $p^\ast Y\to E$ has two pullbacks to $E\times_B E$, and they are
canonically identified because $p\pi_1=p\pi_2$. This defines the comparison
functor
$$
K_p\:\slice{\C}{B}\longrightarrow \Desc(p).
$$

\begin{defi}
The morphism $p\:E\to B$ is called:
\begin{enumerate}
\item a \emph{descent morphism} if $K_p$ is fully faithful;
\item an \emph{effective descent morphism} if $K_p$ is an equivalence.
\end{enumerate}
\end{defi}

Thus descent means that morphisms between global objects are recovered from
their local pullbacks, provided these morphisms respect the descent data.
Effective descent means that every local object equipped with coherent
overlap data is induced by a global object, uniquely up to unique
isomorphism.

When the relevant coequalizers exist, descent data can be realized by gluing. This simplifies checking whether the comparison functor $K_p$ is an equivalence of categories. For a descent datum $(X,\theta)$, the realization $\Phi_p(X,\theta)$ is given
by the coequalizer
$$
\xymatrix{
T_p(X)
\ar@<0.5ex>[r]^-{\theta}
\ar@<-0.5ex>[r]_-{q}
&
X
\ar[r]
&
\Phi_p(X,\theta),
}
$$
where $q:T_p(X)\to X$ is the projection to the second factor. The resulting
object lies over $B$. This construction gives the usual realization functor
$$
\Phi_p\:\Desc(p)\longrightarrow\slice{\C}{B}.
$$
Under the stated hypotheses, $\Phi_p$ is left adjoint to $K_p$.

\subsection{Galois descent}\label{subsec:Galoisrecall}

Galois descent, as developed in \cite{MarquesGaloisDescent}, has the same shape of the theory of descent recalled above, but the source of the compatibility data for local forms is different. In ordinary descent, compatibility is imposed on overlaps
$E\times_B E$. In Galois descent, compatibility is imposed along symmetries. This point of
view is standard in categorical approaches to descent; see
\cite{JanelidzeTholenI,JanelidzeTholenII,StreetDescent}.

Let $G\to B$ be a group object in the slice category $\C/B$, and suppose that $G$ acts on
$p\:E\to B$ by a morphism
$$
h:G\times_B E\longrightarrow E
$$
over $B$. This action determines the action groupoid
$$
G\ltimes E=\bigl(G\times_B E\rightrightarrows E\bigr),
$$
whose source is the projection $\pi_2:G\times_B E\to E$ and whose target is
$h$. Thus an element $(g,e)$ is regarded as an arrow
$$
e\longrightarrow g\cdot e.
$$
The actions of the internal groupoid $G\ltimes E$ then play the role of descent data, associated to the action of the group object $G$ on $p$.

\begin{defi}
The category of $G$-descent data over $E$ is
$$
\Desc_G(E)= \Act[\C]{G\ltimes E}.
$$
\end{defi}

Thus an object of $\Desc_G(E)$ is an object $X\to E$ together with a
compatible action of $G$ on $X$ over the given action of $G$ on $E$. In other
words, $X$ is a local object over $E$ equipped with symmetry data.

There is again a comparison functor
$$
K_G:\slice{\C}{B}\longrightarrow \Desc_G(E).
$$
It sends an object $Y\to B$ to its pullback $E\times_B Y\to E$. The group
$G$ acts on $E\times_B Y$ through its action on the first factor:
$$
g\cdot(e,y)=(g\cdot e,y).
$$
This gives the canonical $G$-descent datum attached to $Y$.

\begin{defi}
The action of $G$ on $p\:E\to B$ is said to satisfy:
\begin{enumerate}
\item \emph{Galois descent} if $K_G$ is fully faithful;
\item \emph{effective Galois descent} if $K_G$ is an equivalence.
\end{enumerate}
\end{defi}
The relation with classical descent along a morphism is controlled by the torsor map
$$
G\times_B E\longrightarrow E\times_B E,
\qquad
(g,e)\longmapsto (g\cdot e,e).
$$
When this map is an isomorphism, the action groupoid $G\ltimes E$ is
identified with the \vvv{C}ech groupoid $\Eq(p)$ of $p\:E\to B$, given by the kernel pair of the morphism $p$. Equivalently, $(E,h)$ is a \emph{$G$-pseudotorsor} in the sense of \cite{MarquesGaloisDescent}. It is shown in \cite{MarquesGaloisDescent} that then effective Galois descent for the action $h$ of $G$ on $p\:E\to B$ can be reduced to the condition that the unique morphism from $E$ to the terminal object is an effective descent morphism in the classical sense. In fact, more is true. Galois effective descent for $h$ holds if and only if the torsor map is an isomorphism and $E\to 1$ is an effective descent morphism. This is proved in \cite[{\thex{3.7}}]{MarquesGaloisDescent}. Interestingly, similar rigidity criteria do not hold for descent, as opposed to effective descent.

\begin{rem}
    We will develop a notion of generalized descent that captures and generalizes both classical descent along a morphism and Galois descent, while satisfying as many as possible of the good properties enjoyed by those. For this, we need to combine the monadic approach of descent along a morphism with the approach of Galois descent using actions of an internal groupoid.
\end{rem}

\subsection{A useful pullbacks lemma} The following lemma involving pasting of pullbacks will be useful in the following sections. We can interpret it as an associativity result for pullbacks. It is well-known that this is actually captured by the fact that the codomain bifibration $\cod\:\C^2\to \C$ satisfies the Beck--Chevalley condition. This observation will be very useful to us in Section \ref{sec:general-fibrational-setting}, when we will explore generalizations of our theory to the general fibrational setting. We will recall the Beck--Chevalley condition in section \ref{sec:general-fibrational-setting}.

\begin{lem}\label{lem:assoc-pb}
Assume we are given morphisms
\[
A \xrightarrow{\,p\,} D,\qquad
B \xrightarrow{\,q\,} D,\qquad
B \xrightarrow{\,r\,} E,\qquad
C \xrightarrow{\,s\,} E.
\]
in the category $\C$. Then there is a canonical isomorphism $\psi$ given by the composite
\[
(A\times_D B)\times_E C \iso R \iso A\times_D (B\times_E C),
\]
where $R$ is the pullback of $p_2$ and $q_1$ as depicted in the following diagram:
\begin{cd}[6][6]
R \PB{rd} \ar[r,"r_2"] \ar[d,"r_1"'] \& B\times_E C \PB{rd}\ar[r,"q_2"] \ar[d,"q_1"'] \& C \ar[d,"f"] \\
A\times_D B \PB{rd} \ar[r,"p_2"'] \ar[d,"p_1"'] \& B \ar[r,"g"'] \ar[d,"h"'] \& E \\
A \ar[r,"k"'] \& D \&
\end{cd}
Moreover, the isomorphism $\psi$ appropriately commutes with the relevant projections.
\end{lem}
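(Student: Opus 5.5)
The plan is to prove both isomorphisms by the pasting lemma for pullbacks: a square composed of two adjacent squares, the second of which is a pullback, is a pullback if and only if the composite rectangle is a pullback. The diagram mixes letters, so first fix the dictionary. The maps written $k,h,g,f$ in the diagram are the given $p\:A\to D$, $q\:B\to D$, $r\:B\to E$ and $s\:C\to E$, respectively. The object $A\times_D B$ is the pullback of $k$ and $h$, with projections $p_1,p_2$. The object $B\times_E C$ is the pullback of $g$ and $f$, with projections $q_1,q_2$. Finally $R$ is the pullback of $p_2$ and $q_1$, with projections $r_1,r_2$.

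For the first isomorphism $(A\times_D B)\times_E C\iso R$, consider the upper horizontal rectangle formed by the squares with vertices $R$, $B\times_E C$, $A\times_D B$, $B$ and $B\times_E C$, $C$, $B$, $E$. Both squares are pullbacks by construction, so the pasting lemma makes the outer rectangle a pullback. That rectangle has top arrow $q_2r_2\:R\to C$, left arrow $r_1\:R\to A\times_D B$, and bottom composite $g p_2 = r\circ p_2\:A\times_D B\to E$. Hence $R$, together with $r_1$ and $q_2r_2$, is a pullback of $r p_2$ and $s$, which is exactly how $(A\times_D B)\times_E C$ is defined. Uniqueness of pullbacks up to unique isomorphism then gives a canonical isomorphism commuting with the projections to $A\times_D B$ and to $C$.

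For the second isomorphism $R\iso A\times_D(B\times_E C)$, use the left vertical rectangle formed by the squares with vertices $R$, $B\times_E C$, $A\times_D B$, $B$ and $A\times_D B$, $B$, $A$, $D$. Again both squares are pullbacks, so the vertical pasting is a pullback. This exhibits $R$, with projections $p_1r_1\:R\to A$ and $r_2\:R\to B\times_E C$, as a pullback of $k$ and $h q_1 = q\circ q_1$, which is the definition of $A\times_D(B\times_E C)$. This gives the second canonical isomorphism, compatible with the projections to $A$ and to $B\times_E C$.

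Composing the two isomorphisms defines $\psi$. The statement that $\psi$ commutes with the relevant projections is then a matter of bookkeeping. Through $\psi$, the projection $(A\times_D B)\times_E C\to C$ corresponds to $q_2\circ\pi_2$ on $A\times_D(B\times_E C)$, and the projection to $A$ via $A\times_D B$ corresponds to the first projection. The middle component to $B$ is $p_2r_1=q_1r_2$ on $R$. Since there are no genuine obstacles, the only step requiring care is this one: correctly identifying the composite legs of each pasted rectangle with the legs in the definition of the iterated pullbacks, since the letters in the diagram differ from those in the hypotheses.
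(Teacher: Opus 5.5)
Your proposal is correct and follows the paper's proof: you paste the two upper squares horizontally to identify $R$ with $(A\times_D B)\times_E C$, the pullback of $s$ along $r\circ p_2$. You then paste the two left squares vertically to identify $R$ with $A\times_D(B\times_E C)$, the pullback of $q\circ q_1$ along $p$, and compose the two isomorphisms. One small slip in wording: the pasting lemma should say that the \emph{first} square is a pullback if and only if the rectangle is, but you only use the direction that two pullbacks paste to a pullback, which is valid.
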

\begin{proof}
By the usual pullbacks lemma, we have that $R$ coincides with the pullback of $f$ along the composite $g\c \pi_2$. That is, there is a canonical isomorphism between $R$ and the pullback $(A\times_D B)\times_E C$. Moreover, this canonical isomorphism appropriately commutes with the relevant projections. Similarly, again by the pullbacks lemma, $R$ also coincides with the pullback of $h\c q_1$ along $k$. So that we have a canonical isomorphism from $R$ to $A\times_D (B\times_E C)$. Again, this isomorphism appropriately commutes with the relevant projections. Composing the two canonical isomorphisms, we conclude.
\end{proof}

It is well-known that the result above is actually a consequence of the fact that the codomain fibration $\cod\:\C^2\to \C$ is a bifibration that satisfies the Beck-Chevalley condition at every pullback square.

\section{Monadicity of actions of internal categories}\label{secmonadicity}

In this section, we prove a monadicity result for actions of internal categories. More precisely, given any internal category 
$\DD=($\begin{cd}*
    D_1 \arrow[r,shift left=1.7,"s"]\arrow[r,shift right=1.7,"t"']\& D_0 \arrow[l,"i"{description}]
\end{cd}$)$ in $\C$, we prove that the category of actions of $\DD$ is the category of Eilenberg--Moore algebras for a monad on $\C/{D_0}$. This result generalizes the fundamental well-known monadicity result for actions of group objects.

The monadicity theorem presented in this section will also be very useful for the applications of our theory to descent. Indeed, this will allow us to combine the monadic approach to classical descent along a morphism with the approach of Galois descent based on actions of an internal groupoid.

\begin{cons}\label{consTD}
    Let $\DD=($\begin{cd}*
    D_1 \arrow[r,shift left=1.7,"s"]\arrow[r,shift right=1.7,"t"']\& D_0 \arrow[l,"i"{description}]
\end{cd}$)$ be an internal category in $\C$. We construct the underlying functor of the monad that will capture the actions of $\DD$:
    \begin{fun}
 	\T_{\DD} & \: & \slice{\C}{D_0} & \too & \slice{\C}{D_0}\\[1ex]
    && \begin{cd}*[3]
         {C} \arrow[d,"\gamma"]\\
         {D_0}
     \end{cd} & \mto & \begin{cd}*[3]
         {D_1\x[D_0] C} \arrow[d,"\pi_1"]\\
         {D_1} \arrow[d,"t"]\\
         {D_0}
        \end{cd} \\
    && \begin{cd}*[4][1]
    C \&\& {C'} \\
	\& {D_0}
	\arrow["f", from=1-1, to=1-3]
	\arrow["\gamma"'{pos=0.35}, from=1-1, to=2-2]
	\arrow["{\gamma'}"{pos=0.35}, from=1-3, to=2-2]
\end{cd} & \mto & \begin{cd}*[2.5][2]
    {D_1\x[D_0] C} \& {D_1\x[D_0] C'} \\
	{D_1} \& {D_1} \\
	{D_0} \& {D_0}
	\arrow["{\id{}\x f}"{inner sep=1ex}, from=1-1, to=1-2]
	\arrow["{\pi_1}"', from=1-1, to=2-1]
	\arrow["{\pi_1}", from=1-2, to=2-2]
	\arrow[equals, from=2-1, to=2-2]
	\arrow["t"', from=2-1, to=3-1]
	\arrow["t", from=2-2, to=3-2]
	\arrow[equals, from=3-1, to=3-2]
    \end{cd}
\end{fun}
where $\pi_1$ is given by the pullback
\begin{cd}
    {D_1\x[D_0] C}\PB{rd} \& C \\
	{D_1} \& {D_0}
	\arrow["{\pi_2}", from=1-1, to=1-2]
	\arrow["{\pi_1}"', from=1-1, to=2-1]
	\arrow["\gamma", from=1-2, to=2-2]
	\arrow["s"', from=2-1, to=2-2]
\end{cd}
and similarly for $\gamma'$ in the place of $\gamma$. Notice that we write $D_1\x[D_0] C$ for the pullback of $\gamma$ along the source map $s$.

It is straightforward to show that $\T_{\DD}$ is a functor, using the universal property of the pullback. 
\end{cons}

\begin{theo} \label{thm:action-monad-algebras}
Let $\DD=($\begin{cd}*
    D_1 \arrow[r,shift left=1.7,"s"]\arrow[r,shift right=1.7,"t"']\& D_0 \arrow[l,"i"{description}]
\end{cd}$)$ be an internal category in $\C$. The functor 
\begin{fun}
 	\T_{\DD} & \: & \slice{\C}{D_0} & \too & \slice{\C}{D_0}\\[1ex]
     && (C\aar{\gamma}D_0) & \mto & (D_1\x[D_0] C \aar{\pi_1} D_1\aar{t}D_0)
\end{fun}
described in \conx\ref{consTD} extends to a monad $\T_{\DD}=(\T_{\DD},\eta^{\DD},\mu^{\DD})$. Moreover, the category $\Algs{\T_{\DD}}$ of Eilenberg--Moore algebras for $\T_{\DD}$ and morphisms between them is isomorphic to $\Act[\C]{\DD}$.
\end{theo}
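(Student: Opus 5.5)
I will first define the unit and the multiplication explicitly, using the internal category structure, and then check the monad axioms and identify the algebras.

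\textbf{The unit.} For $\gamma\:C\to D_0$, let
$$\eta^{\DD}_{(C,\gamma)}=(i\gamma,\id{C})\:C\to D_1\x[D_0] C .$$
This is well defined because $si=\id{D_0}$. It is a morphism over $D_0$ because $t\pi_1(i\gamma,\id{C})=ti\gamma=\gamma$.

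\textbf{The multiplication.} Lemma~\ref{lem:assoc-pb} identifies
$$\T_{\DD}^2(C)=D_1\x[D_0](D_1\x[D_0] C)\iso (D_1\x[s,D_0,t] D_1)\x[D_0] C,$$
compatibly with the projections. Here the pullback of $D_1\x[D_0]C$ over $D_0$ is taken via $t\pi_1$. Composing this isomorphism $\psi$ with $m\x\id{C}$ gives
$$\mu^{\DD}_{(C,\gamma)}\:\T_{\DD}^2(C)\to D_1\x[D_0] C .$$
This is well defined because $s\,m=s\,\pi_2$ on composable pairs. It lies over $D_0$ because $t\,m=t\,\pi_1$.

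\textbf{Monad axioms.} Naturality of $\eta^{\DD}$ and $\mu^{\DD}$ in $(C,\gamma)$ follows from the universal property of pullbacks, since both maps act as the identity on the $C$-component. For the monad laws I will compose with the jointly monic pullback projections. The unit laws reduce to $m(i t,\id{})=\id{}$ and $m(\id{},is)=\id{}$. Associativity reduces to associativity of $m$ after identifying $\T_{\DD}^3(C)$ with $(D_1\x[D_0]D_1\x[D_0]D_1)\x[D_0]C$, using Lemma~\ref{lem:assoc-pb} twice. The main bookkeeping obstacle is checking that the two routes through $\T_{\DD}^3$ correspond to $m(m\x\id{})$ and $m(\id{}\x m)$ under these canonical isomorphisms. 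I would handle this by checking the components $\pi_1$ and $\pi_2$ separately, where everything becomes a statement about projections.

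\textbf{Algebras as actions.} A $\T_{\DD}$-algebra on $(C,\gamma)$ is a morphism $\xi\:D_1\x[D_0]C\to C$ in $\slice{\C}{D_0}$ satisfying the unit and associativity laws. I will match these conditions with the axioms of Definition~\ref{defactint} as follows:
\begin{itemize}
\item being a morphism over $D_0$ is exactly axiom (1), $\gamma\xi=t\pi_1$;
\item $\xi\eta^{\DD}=\id{C}$ is exactly axiom (2);
\item $\xi\,\mu^{\DD}=\xi\,\T_{\DD}(\xi)$ becomes axiom (3) after precomposing with $\psi^{-1}$, since $\T_{\DD}(\xi)=\id{}\x\xi$.
\end{itemize}

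A morphism of algebras is a morphism $h$ over $D_0$ with $h\xi=\xi'(\id{}\x h)$, which is precisely a morphism of actions. So the assignment $(C,\gamma,\xi)\mapsto((C,\gamma),\xi)$, acting as the identity on morphisms, is an isomorphism of categories $\Act[\C]{\DD}\iso\Algs{\T_{\DD}}$.
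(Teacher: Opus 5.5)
Your proposal is correct and follows essentially the same route as the paper. You use the same unit $\langle i\gamma,\id{C}\rangle$, the same multiplication $(m\times\id{C})\circ\psi$ obtained from Lemma~\ref{lem:assoc-pb}, and the same axiom-by-axiom identification of $\T_{\DD}$-algebras and their morphisms with $\DD$-actions, and you are in fact slightly more explicit than the paper in reducing the monad unit laws to $m(it,\id{})=\id{}$ and $m(\id{},is)=\id{}$.
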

\begin{proof}
We first prove that the functor $\T_{\DD}$ extends to a monad $\T_{\DD}=(\T_{\DD},\eta^{\DD},\mu^{\DD})$.
    The unit $\eta^{\DD}\:\Id{}\aR{} \T_{\DD}$ is defined to have general component on $\gamma\:C\to D_0$ induced by the universal property of the pullback $D_1\x[D_0] C$, as in the following diagram:
    \begin{cd}[4][5]
        C \&[-2.5ex]\\
	\& {D_1\x[D_0] C} \PB{rdd}\& C \\[-4ex]
	{D_0} \\[-3ex]
	\& {D_1} \& {D_0} \\
	\& {D_0}
	\arrow["{\exists ! \eta^{\DD}_{\gamma}}"', dashed, from=1-1, to=2-2]
	\arrow[curve={height=-12pt}, equals, from=1-1, to=2-3]
	\arrow["\gamma"', from=1-1, to=3-1]
	\arrow["{\pi_2}"', from=2-2, to=2-3]
	\arrow["{\pi_1}", from=2-2, to=4-2]
	\arrow["\gamma", from=2-3, to=4-3]
	\arrow["i"', from=3-1, to=4-2]
	\arrow[curve={height=12pt}, equals, from=3-1, to=5-2]
	\arrow["s"', from=4-2, to=4-3]
	\arrow["t", from=4-2, to=5-2]
    \end{cd}
    It is easy to show that $\eta^{\DD}$ is a natural transformation.

    The multiplication $\mu^{\DD}\:\T_{\DD}\c \T_{\DD}\aR{}\T_{\DD}$ is defined to have general component on $\gamma\:C\to D_0$ given by the composite $[\mu^{\DD}_{\gamma}]\c j$ where $j$ is the canonical isomorphism 
    $$D_1\x[D_0] (D_1\x[D_0] C) \iso (D_1\x[D_0] D_1)\x[D_0] C$$ described in \lemx\ref{lem:assoc-pb} and $[\mu^{\DD}_{\gamma}]$ is given by the universal property of the pullback $D_1\x[D_0] C$ as in the following diagram:
\begin{cd}[4][4]
	{D_1\x[D_0] (D_1\x[D_0] C)} \&[-4ex]\&[-1ex] {D_1\x[D_0] C} \\[-3ex]
	\& {(D_1\x[D_0] D_1)\x[D_0] C} \\
	\&\& {D_1\x[D_0] C} \PB{rdd} \& C \\[-4ex]
	\& {D_1\x[D_0] D_1} \\[-3ex]
	\&\& {D_1} \& {D_0} \\[-4ex]
	\& {D_1} \\[-3ex]
	\&\& {D_0}
	\arrow["{\pi_2}", from=1-1, to=1-3]
	\arrow[iso,"j"', from=1-1, to=2-2]
	\arrow["{\pi_1}"', curve={height=12pt}, from=1-1, to=6-2]
	\arrow["{\pi_2}", from=1-3, to=3-4, curve={height=-12pt}]
	\arrow["{\exists ! [\mu^{\DD}_{\gamma}]}"', dashed, from=2-2, to=3-3]
	\arrow["{\pi_2}", curve={height=-14pt}, from=2-2, to=3-4]
	\arrow["{\pi_1}"', from=2-2, to=4-2]
	\arrow["{\pi_2}"', from=3-3, to=3-4]
	\arrow["{\pi_1}", from=3-3, to=5-3]
	\arrow["\gamma", from=3-4, to=5-4]
	\arrow["\comp"', from=4-2, to=5-3]
	\arrow["{\pi_2}"', from=4-2, to=6-2]
	\arrow["s"', from=5-3, to=5-4]
	\arrow["t", from=5-3, to=7-3]
	\arrow["t"', from=6-2, to=7-3]
    \end{cd}
    It is straightforward to prove that $\mu$ is natural, using the universal property of the pullbacks involved. One can then check that $\T_{\DD}=(\T_{\DD},\eta^{\DD},\mu^{\DD})$ satisfies the monad axioms using associativity and unitality of the composition of the internal category $\DD$.

    An algebra for the monad $\T_{\DD}$ is a pair 
    \begin{eqD*}
      ( \begin{cd}*
      C \arrow[d, "\gamma"]\\[-2ex]
      D_0
      \end{cd} \hspace{2ex} ,  \hspace{2ex}  \begin{cd}*[4][4]
      {D_1 \times_{D_0} C } \&[-3ex] \& [-3ex]\& [-0.5ex] C \\[-2ex]
	\& {D_1} \\[-2ex]
	\&\& {D_0}
	\arrow["\xi", from=1-1, to=1-4]
	\arrow["{\pi_1}"', from=1-1, to=2-2]
	\arrow["\gamma", from=1-4, to=3-3]
	\arrow["t"', from=2-2, to=3-3]
      \end{cd} )
    \end{eqD*}
    such that $\xi \circ \eta^{\DD}_{\gamma}$ is $\id{C}$ seen as the identity of $\gamma$ in the slice category $\C/ {D_0}$ and $\xi \c \T_{\DD}(\xi)= \xi \c \mu^{\DD}_{\gamma}$ as morphisms in $\C/ {D_0}$. This precisely corresponds to the action $(C, \gamma, \xi)$ of $\DD$ on the object $C$ (see Definition \ref{defactint}). Indeed, the first axiom of action is precisely the fact that $\xi$ is a morphism in the slice category $\C/D_0$ from $t\c \pi$ to $\gamma$. The third axiom of action is then precisely the requirement that $\gamma \circ \eta^{\DD}_{\gamma}$ is $\id{C}$ seen as the identity of $\gamma$ in the slice category $\C/ {D_0}$ since $\eta_{\gamma}=\langle i \c \gamma, \id{C} \rangle$. Moreover, the second axiom of action is precisely the requirement that $\xi \c \T_{\DD}(\xi)= \xi \c \mu^{\DD}_{\gamma}$ as morphisms in $\C/ {D_0}$ since $\T_{\DD}(\xi)=D_1 \times \xi$ and the following square is commutative by definition of $\mu^{\DD}$
    \begin{cd}
        {D_1 \times_{D_0} (D_1 \times_{D_0} C)} \& {(D_1 \times_{D_0} D_1) \times_{D_0} C} \\
	{(D_1 \times_{D_0} D_1) \times_{D_0} C} \& {D_1 \times_{D_0} C.}
	\arrow["j"', from=1-1, to=1-2, iso]
	\arrow["j"', from=1-1, to=2-1, iso]
	\arrow["{[\mu^{\DD}_{\gamma}]}", from=1-2, to=2-2]
	\arrow["{\operatorname{comp} \times \id{C}}"', from=2-1, to=2-2]
    \end{cd}
    Furthermore, a morphism of algebras for the monad $\T_{\DD}$ 
    \begin{eqD*}
      ( \begin{cd}*
      C \arrow[d, "\gamma"]\\[-2ex]
      D_0
      \end{cd}, \begin{cd}*[4][4]
      {D_1 \times_{D_0} C } \&[-3ex] \& [-3ex]\& [-0.5ex] C \\[-2ex]
	\& {D_1} \\[-2ex]
	\&\& {D_0}
	\arrow["\xi", from=1-1, to=1-4]
	\arrow["{\pi_1}"', from=1-1, to=2-2]
	\arrow["\gamma", from=1-4, to=3-3]
	\arrow["t"', from=2-2, to=3-3]
      \end{cd} ) 
      \begin{cd}* {} \arrow[r,"f"] \& {} \end{cd}  
      ( \begin{cd}*
      C' \arrow[d, "\gamma'"]\\[-2ex]
      D_0
      \end{cd} , \begin{cd}*[4][4]
      {D_1 \times_{D_0} C' } \&[-3ex] \& [-3ex]\& [-0.5ex] C' \\[-2ex]
	\& {D_1} \\[-2ex]
	\&\& {D_0}
	\arrow["\xi", from=1-1, to=1-4]
	\arrow["{\pi_1}"', from=1-1, to=2-2]
	\arrow["\gamma'", from=1-4, to=3-3]
	\arrow["t"', from=2-2, to=3-3]
      \end{cd} ) \hspace{3ex} 
    \end{eqD*}
    is a morphism $C \arr{f} C'$ in $\C /D_0$ from $\gamma$ to $\gamma'$ such that $\xi' \c T(f)= f \c \xi$. This precisely corresponds to a morphism $(C,\gamma, \xi) \arr{f} (C', \gamma' \xi')$ between objects with an action of $\DD$ (see Definition \ref{defactint}), thanks to the fact that $\T_{\DD}(f)= \id{D_1} \times f$. 
    
    We have thus proved that the category $\Algs{\T_{\DD}}$ of Eilenberg--Moore algebras for $\T_{\DD}$ and morphisms between them is isomorphic to $\Act[\C]{\DD}$.
    \end{proof}

\begin{remark}
    The monads $\T_{\DD}$ generalize the well-known action monads associated to group objects. Indeed, when $\DD$ is the internal category
    \begin{cd}*
    G \arrow[r,shift left=1.7,"s"]\arrow[r,shift right=1.7,"t"']\& 1 \arrow[l,"i"{description}]
    \end{cd}
    in $\C$ corresponding to a group object $G$ in $\C$, the monad $\T_{\DD}$ becomes the well-known monad 
    \begin{fun}
 	\T_{G} & \: & {\C} & \too & {\C} \\[1ex]
    && X & \mto & G \x X 
\end{fun}
The result that $\Algs{\T_{\DD}}$ is equivalent to $\Act[\C]{\DD}$ generalizes the fundamental result for group objects that $\Algs{\T_G}$ is equivalent to the category of actions of $G$.

For this reason, we will refer to the monads of the form $\T_{\DD}$ associated to an internal category $\DD$ as \dfn{generalized action monads}.
\end{remark}

\section{Functoriality of $\Act[\C]{-}$}\label{sec:functoriality}

Building upon the results of the previous section, we prove that the assignment sending an internal category $\DD$ to its category of actions $\Act[\C]{\DD}$ is functorial. This functoriality is one of the structural ingredients needed for the descent-theoretic applications.

We first prove that the assignment $\T_{\DD}$ of the associated generalized action monad to every internal category $\DD$ in $\C$ is functorial.

\begin{theo}\label{theorTisafunctor}
    The assignment
    {$$\DD=(\text{\begin{cd}*
    D_1 \arrow[r,shift left=1.7,"s"]\arrow[r,shift right=1.7,"t"']\& D_0 \arrow[l,"i"{description}]
    \end{cd}})\in \Cat(\C) \quad \h[3] \mapsto \h[3] \quad \T_{\DD}$$}
    extends to a functor
    $$\T: \Cat(\C) \to \Mndcoadj$$
\end{theo}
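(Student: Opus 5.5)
Given an internal functor $F=(F_0,F_1)\colon \DD\to\mathbb{E}$, with $\mathbb{E}=(E_1\rightrightarrows E_0)$, I take as underlying functor of $\T(F)$ the postcomposition functor
$$F_{0!}\colon \slice{\C}{D_0}\too\slice{\C}{E_0},\qquad (C\aar{\gamma}D_0)\mto(C\aar{F_0\gamma}E_0).$$
It is a left adjoint, with right adjoint the pullback functor $F_0^\ast$, so $\T(F)$ will land in $\Mndcoadj$ once I supply a comorphism structure
$$\chi\colon F_{0!}\T_{\DD}\Longrightarrow \T_{\mathbb{E}}F_{0!}.$$
On $\gamma\colon C\to D_0$, the source of $\chi_\gamma$ is $D_1\x[D_0]C$ over $E_0$ via $F_0t\pi_1$, and the target is $E_1\x[E_0]C$ (the pullback of $F_0\gamma$ along $s$) over $E_0$ via $t\pi_1$. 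I define $\chi_\gamma=\langle F_1\pi_1,\pi_2\rangle$. This is well defined because $sF_1\pi_1=F_0s\pi_1=F_0\gamma\pi_2$, and it is a morphism over $E_0$ because $tF_1=F_0t$. Naturality in $\gamma$ follows from the universal property of the pullback, since $\T_{\DD}(f)=\id{}\x f$ and similarly for $\mathbb{E}$.

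Next I check the two comorphism axioms, which are the duals of those in \defx\ref{defmormnd}. The unit axiom reads $\chi_\gamma\c F_{0!}\eta^{\DD}_\gamma=\eta^{\mathbb{E}}_{F_0\gamma}$. Since $\eta^{\DD}_\gamma=\langle i\gamma,\id{C}\rangle$, this reduces to $F_1 i=iF_0$, i.e.\ preservation of identities. The multiplication axiom reads
$$\chi_\gamma\c F_{0!}\mu^{\DD}_\gamma=\mu^{\mathbb{E}}_{F_0\gamma}\c\T_{\mathbb{E}}(\chi_\gamma)\c\chi_{\T_{\DD}\gamma}.$$
Both sides are maps out of $D_1\x[D_0](D_1\x[D_0]C)$ into a pullback, so it suffices to compare the two projections. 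Using the canonical isomorphisms $j$ of \lemx\ref{lem:assoc-pb} and the fact that $\mu$ is $\comp\x\id{C}$ up to $j$, the second components are both the projection to $C$. The first components are $F_1\c\comp$ and $\comp\c(F_1\x F_1)$, which agree because $F$ preserves composition. I expect this multiplication axiom to be the main obstacle, mostly as bookkeeping: the isomorphisms $j$ must be tracked through $\T_{\mathbb{E}}(\chi_\gamma)$. I would handle this by working with generalized elements $(g,f,c)$, which makes both composites visibly equal to $(F_1(g\c f),c)$.

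Finally, I verify functoriality. For $F=\Id{}$, $\chi$ is the identity, because $\langle\pi_1,\pi_2\rangle=\id{}$. For a composite $G\c F$, the underlying functors satisfy $(G_0F_0)_!=G_{0!}F_{0!}$ strictly. The comorphism composite $G_{0!}\chi^F$ followed by $\chi^G_{F_{0!}}$ equals $\langle G_1F_1\pi_1,\pi_2\rangle=\chi^{GF}$, again checked on projections. Composition of comorphisms is pasting of squares in the same way as in $\Mndco$, so this completes the construction of the functor $\T\colon\Cat(\C)\to\Mndcoadj$.
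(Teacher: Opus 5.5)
Your proposal is correct and follows the paper's proof essentially verbatim: you use the same underlying functor $F_0\circ -$ (left adjoint to $F_0^\ast$), the same component $F_1\times\id{}=\langle F_1\pi_1,\pi_2\rangle$, and the same checks of identities and composition via $(G_1\times\id{})\circ(F_1\times\id{})=(G_1F_1)\times\id{}$. The only difference is that you spell out the comorphism axioms that the paper dismisses as easy, correctly reducing the unit axiom to $F_1 i=iF_0$ and the multiplication axiom to $F_1\circ\comp=\comp\circ(F_1\times F_1)$.
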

\begin{proof}
    We need to associate, to every internal functor in $\C$, a comorphism of monads whose underlying functor is a left adjoint.
    
    Given an internal functor $F\:\DD\to \EE$ exhibited by
    \begin{cd}[5.5]
{D_1 \times_{D_0} D_1} \arrow[r,"{\comp}"] \arrow[d,"{F_1\times F_1}"'] \&[-1ex]
{D_1} \arrow[r,shift left=1.6ex,"s"] \arrow[r,shift right=1.8ex,"t"'] \arrow[d,"{F_1}"'] \&[0.5ex]
{D_0} \arrow[l,"{i}"{description}] \arrow[d,"{F_0}"] \\
{E_1 \times_{E_0} E_1} \arrow[r,"{\comp'}"] \&
{E_1} \arrow[r,shift left=1.6ex,"s'"] \arrow[r,shift right=1.8ex,"t'"'] \&
{E_0} \arrow[l,"{i'}"{description}]
\end{cd}
we define a comorphism of monads $\T_{F}\:\T_{\DD}\to \T_{\EE}$ as follows. Its underlying functor is $F_0\c -\:\C/{D_0}\to \C/{E_0}$. Note that $F_0\c -$ is a left adjoint, of the pullback functor ${F_0}^\ast\:\C/{E_0}\to \C/{D_0}$. We then define the second component of $\T_F$ to be the natural transformation
\begin{cd}
    {\C/{D_0}} \& {\C/{D_0}} \\
	{\C/{E_0}} \& {\C/{E_0}}
	\arrow["{\T_{\DD}}", from=1-1, to=1-2]
	\arrow["{F_0\c -}"', from=1-1, to=2-1]
	\arrow["{\tau_F}", Rightarrow, from=1-2, to=2-1, shorten <=2.7ex,shorten >=2.2ex]
	\arrow["{F_0\c -}", from=1-2, to=2-2]
	\arrow["{\T_{\EE}}"', from=2-1, to=2-2]
\end{cd}
that has component ${(\tau_F)}_{\gamma}$ on $\gamma\:C\to D_0$ given by the following morphism in the slice over $E_0$
\begin{cd}[4]
    {D_1\x[D_0] C} \& {E_1\x[E_0] C} \\
	{D_1} \& {E_1} \\
	{D_0} \& {E_0}
	\arrow["{F_1\x \id{}}", from=1-1, to=1-2]
	\arrow["{\pi_1}"', from=1-1, to=2-1]
	\arrow["{\pi_1}", from=1-2, to=2-2]
	\arrow["{F_1}"', from=2-1, to=2-2]
	\arrow["t"', from=2-1, to=3-1]
	\arrow["{t'}", from=2-2, to=3-2]
	\arrow["{F_0}"', from=3-1, to=3-2]
\end{cd}
It is straightforward to see that ${\tau_F}$ is a natural transformation. Moreover, it is easy to check that $(F_0 \c -, \tau_F)$ is a comorphism of monads using the universal property of the pullbacks involved. It remains to prove that $\T$ preserves identities and composition. If $F$ is the identity, then clearly $F_0 \c -=\id{}$ and $(\tau_F)_{\gamma}=\id{} \times \id{}=\id{}$ for every $\gamma$. So $\T$ preserves identities. Furthermore, given composable internal functors $F\: \DD \to \EE$ and $G\:\EE \to \BB$, we have that $(G_0 \c F_0)\c -=(G_0 \c -) \c (F_0 \c -)$. And $\tau_{G \c F}$ is equal to the pasting of $\tau_{F}$ and $\tau_{G}$ by construction, since $(G_1 \x \id{}) \c (F_1 \x \id{})=(G_1 \c F_1) \x \id{}$. So $\T$ preserves composition.
\end{proof}

Composing with the functor $\Alg(-)\:{(\Mndcoadj)}\op\to \Cat$ of subsection \ref{subsec:monad-morphisms-comorphisms}, we obtain the functoriality of taking the category of actions of an internal category. As explained in Section \ref{secprelim}, in order to associate to a comorphism of monads a functor between the categories of Eilenberg--Moore algebras, it is sufficient to have that the underlying functor of that comorphism of monads is a left adjoint. Indeed, the calculus of mates turns it into a morphism of monads and thus yields a functor between the Eilenberg--Moore categories. Therefore, it is a key property that the comorphism of monads $\T_F$ built above from any internal functor $F$ has as underlying functor a left adjoint.

\begin{theo}
    The assignment
    {$$\DD=(\text{\begin{cd}*
    D_1 \arrow[r,shift left=1.7,"s"]\arrow[r,shift right=1.7,"t"']\& D_0 \arrow[l,"i"{description}]
    \end{cd}})\in \Cat(\C) \quad \h[3] \mapsto \h[3] \quad \Act[\C]{\DD}\in \Cat$$}
    extends to a functor
    $$\Act[\C]{-}: {\Cat(\C)}\op \to \Cat.$$
    More precisely, $\Act[\C]{-}$ is given by the composite of functors
    $${\Cat(\C)}\op\aarr{\T\op} {(\Mndcoadj)}\op\aar{\Alg(-)} \Cat$$
\end{theo}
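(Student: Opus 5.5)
The plan is to obtain the functor directly as the composite ${\Cat(\C)}\op\aarr{\T\op} {(\Mndcoadj)}\op\aar{\Alg(-)} \Cat$. Both constituents are already available. \thex\ref{theorTisafunctor} provides $\T\:\Cat(\C)\to\Mndcoadj$, so $\T\op$ is a functor. Subsection \ref{subsec:monad-morphisms-comorphisms} provides $\Alg(-)\:{(\Mndcoadj)}\op\to\Cat$, once a right adjoint has been chosen for each underlying left adjoint. Composites of functors are functors, so what remains is to check that this composite is what $\Act[\C]{-}$ is meant to be, and to make the choices of adjoints in a way that secures strict functoriality.

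\emph{Step 1 (choice of adjoints and strict functoriality).} For an internal functor $F$, the underlying functor of $\T_F$ is $F_0\c -$. We choose its right adjoint to be the pullback functor $F_0^\ast$, with pullbacks chosen once and for all. In general, $(G_0F_0)^\ast$ and $F_0^\ast G_0^\ast$ agree only up to canonical isomorphism, so we must check that $\Alg(-)$ is genuinely functorial and not just pseudofunctorial. The cleanest remedy is to argue that $\Alg(-)$ on $\Mndcoadj$ is defined with respect to a fixed choice of right adjoints that is functorial on the subcategory we land in. If no such choice is available, we replace $\Cat$-valued functoriality by the observation that the mate construction is compatible with pasting, which gives equalities after identifying canonically isomorphic pullbacks. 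Concretely, we verify that the mate of $\tau_{G\c F}$ equals the pasting of the mates of $\tau_F$ and $\tau_G$; this is the standard compatibility of mates with horizontal composition. We also verify that the mate of $\tau_{\id{}}$ is the identity, because the unit and counit of $\id{}\dashv\id{}$ are identities and $\tau_{\id{}}=\id{}$.

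\emph{Step 2 (identification on objects and morphisms).} On objects, the composite sends $\DD$ to $\Alg(\T_\DD)$, and \thex\ref{thm:action-monad-algebras} identifies this with $\Act[\C]{\DD}$. We then transport the functor along these isomorphisms. Explicitly, an $\EE$-action $(C,\gamma,\xi)$ is sent to $F_0^\ast C = D_0\x[E_0]C$, with action $D_1\x[D_0](D_0\x[E_0]C)\to D_0\x[E_0]C$. This is the mate composite $\T_\DD F_0^\ast\Rightarrow F_0^\ast \T_\EE$ followed by $F_0^\ast\xi$. Its first component is $t\pi_1$, and its second component is $\xi\c(F_1\x\id{})$ after the isomorphism of \lemx\ref{lem:assoc-pb}. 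This is the expected restriction of an action along an internal functor, so it confirms that the composite deserves to be called $\Act[\C]{-}$.

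The main obstacle is the strictness issue in Step 1: the naturality of the isomorphisms from \thex\ref{thm:action-monad-algebras} in $\DD$, and the coherence of the chosen pullbacks. The first thing to try is to note that $\Act[\C]{-}$ can simply be \emph{defined} as the transport of the composite along the isomorphisms $\Alg(\T_\DD)\iso\Act[\C]{\DD}$. Functoriality then reduces entirely to that of $\T\op$ and $\Alg(-)$, which we are allowed to assume from earlier in the paper.
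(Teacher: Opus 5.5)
Your proposal is correct and follows the paper's proof. The paper likewise obtains $\Act[\C]{-}$ as the composite $\Alg(-)\c\T\op$, using \thex\ref{theorTisafunctor} for $\T$, the functor $\Alg(-)$ of Subsection~\ref{subsec:monad-morphisms-comorphisms}, and \thex\ref{thm:action-monad-algebras} to identify $\Alg(\T_{\DD})$ with $\Act[\C]{\DD}$. Your Step~2 matches the explicit construction the paper gives right after the theorem, while your Step~1 strictness discussion goes beyond the paper, which simply takes $\Alg(-)$ to be a functor once right adjoints are chosen (your mate-based fallback on its own would only give a pseudofunctor).
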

\begin{proof}
    Thanks to \thex\ref{thm:action-monad-algebras}, the category $\Act[\C]{\DD}$ of actions of an internal category $\DD$ can be captured as the category of Eilenberg--Moore algebras of the generalized action monad $\T_{\DD}$ associated to $\DD$. Then the composite $\Alg(-)\c \T\op$ is a functor extending the assignment $\DD\mto \Act[\C]{\DD}$.
\end{proof}

We provide an explicit description of the functor $\Act[\C]{-}$.

\begin{cons}
Given an internal category $\DD\in \Cat(\C)$, the category $\Act[\C]{\DD}$ has as objects the $\T_{\DD}$-algebras, that are pairs 
\begin{eqD*}
    (\begin{cd}* C \arrow[d,"\gamma"] \\ D_0 \end{cd}, \begin{cd}* {D_1 \x_{D_0} C} \&[-2ex] C \\[-2ex]
	{D_1} \\[-2ex]
	{D_0}
	\arrow["\xi", from=1-1, to=1-2]
	\arrow["{\pi_1}"', from=1-1, to=2-1]
	\arrow["\gamma", from=1-2, to=3-1]
	\arrow["t"', from=2-1, to=3-1]
    \end{cd})
\end{eqD*}
and as morphisms the corresponding morphisms of algebras.
  Furthermore, given an internal functor $F\:\DD\to \EE$ exhibited by
    \begin{cd}[5.5]
{D_1 \times_{D_0} D_1} \arrow[r,"{\comp}"] \arrow[d,"{F_1\times F_1}"'] \&[-1ex]
{D_1} \arrow[r,shift left=1.6ex,"s"] \arrow[r,shift right=1.8ex,"t"'] \arrow[d,"{F_1}"'] \&[0.5ex]
{D_0} \arrow[l,"{i}"{description}] \arrow[d,"{F_0}"] \\
{E_1 \times_{E_0} E_1} \arrow[r,"{\comp'}"] \&
{E_1} \arrow[r,shift left=1.6ex,"s'"] \arrow[r,shift right=1.8ex,"t'"'] \&
{E_0} \arrow[l,"{i'}"{description}]
\end{cd}
its image $\Act[\C]{
F}\: \Act[\C]{\EE}  \to \Act[\C]{\DD}$ sends the $\T_{\EE}$-algebra 
\begin{eqD*}
    (\begin{cd}* C' \arrow[d,"\gamma'"] \\ E_0 \end{cd}, \begin{cd}* {E_1 \x_{E_0} C'} \&[-2ex] C' \\[-2ex]
	{E_1} \\[-2ex]
	{E_0}
	\arrow["\xi'", from=1-1, to=1-2]
	\arrow["{\pi_1}"', from=1-1, to=2-1]
	\arrow["\gamma'", from=1-2, to=3-1]
	\arrow["t'"', from=2-1, to=3-1]
    \end{cd})
\end{eqD*}
to the $\T_{\DD}$-algebra given by $P\aar{F_0\st \gamma'} D_0$, where $P$ is the pullback of $\gamma'$ and $F_0$, together with the unique morphism induced by the universal property of $P$, as shown in the following diagram
\begin{cd}[6][6]
    {D_1 \x_{D_0} P} \& S \& {E_1 \x_{E_0} C'} \\
	{D_1} \& P \& {C'} \\
	{D_0} \& {D_0} \& {E_0}
	\arrow["\tau_F", from=1-1, to=1-2]
	\arrow["{\pi_1}"', from=1-1, to=2-1]
	\arrow["{\exists !}", dashed, from=1-1, to=2-2]
	\arrow["{\T_{\EE}(\operatorname{counit})}", from=1-2, to=1-3]
	\arrow["\beta", from=1-3, to=2-3]
	\arrow["t"', from=2-1, to=3-1]
	\arrow[from=2-2, to=2-3]
	\arrow[from=2-2, to=3-2]
	\arrow["\lrcorner"{anchor=center, pos=0.125}, draw=none, from=2-2, to=3-3]
	\arrow["{\gamma'}", from=2-3, to=3-3]
	\arrow[equals, from=3-1, to=3-2]
	\arrow["{F_0}"', from=3-2, to=3-3]
\end{cd}

where $S$ is the pullback of $F_0 \c F_0 \st \gamma'$ and $s'$ and $\operatorname{counit}$ is the component on $\gamma'$ of counit of the adjunction between $F_0\c -$ and $F_0\st$.
\end{cons}

\section{Characterizing generalized action monads}\label{sec:chargenactmnd}

In this section, we characterize which monads arise as generalized action monads $\T_{\DD}$ for an internal category $\DD$. We exhibit an equivalence of categories between the category $\Cat(\C)$ of internal categories in $\C$ and an appropriate subcategory of the category $\Mndcoadj$ of monads. As it was pointed out to us, this equivalence of categories could also be recovered by the general theory for $T$-multicategories of \cite{Leinster}.

We begin by studying the properties and structures of the generalized action monads $\T_{\DD}$ associated to internal categories $\DD$.

\begin{remark}\label{remintrodalpha}
    Notice that, given any internal category $\DD=($\begin{cd}*
    D_1 \arrow[r,shift left=1.7,"s"]\arrow[r,shift right=1.7,"t"']\& D_0 \arrow[l,"i"{description}]
    \end{cd}$)$ in $\C$, the monad $\T_{\DD}$ is naturally equipped with a natural transformation
    \begin{cd}[4.5]
        {\C/{D_0}} \& {\C/{D_0}} \\
	\C
	\arrow["{\T_{\DD}}", from=1-1, to=1-2]
	\arrow["\dom"', from=1-1, to=2-1]
	\arrow["\alpha"', shift left=5, xshift=-2ex, Rightarrow, from=1-2, to=1-1, shorten <= 2ex, shorten >= 1.7ex]
	\arrow["\dom", from=1-2, to=2-1]
    \end{cd}
    More precisely, the component of $\alpha$ on a general $\gamma\:C\to D_0$ is the morphism $\pi_2$ given by the pullback
    \begin{cd}
    {D_1\x[D_0] C} \PB{rd} \& C \\
	{D_1} \& {D_0}
	\arrow["{\pi_2}", from=1-1, to=1-2]
	\arrow["{\pi_1}"', from=1-1, to=2-1]
	\arrow["\gamma", from=1-2, to=2-2]
	\arrow["s"', from=2-1, to=2-2]
\end{cd}

Furthermore, the natural transformation $\alpha$ satisfies the following compatibility axioms with the unit $\eta^{\DD}$ and the multiplication $\mu^{\DD}$ of the monad $\T_{\DD}$:
\begin{eqD*}
    \begin{cd}*[3.7][3.5]
        {\C /D_0} \&\& {\C /D_0} \\[3ex]
	\& \C
	\arrow[""{name=0, anchor=center, inner sep=0}, curve={height=-16pt}, equals, from=1-1, to=1-3]
	\arrow[""{name=1, anchor=center, inner sep=0}, "\T_{\DD}"'{inner sep =0.35ex}, curve={height=10pt}, from=1-1, to=1-3]
	\arrow[""{name=2, anchor=center, inner sep=0}, "\dom"', from=1-1, to=2-2]
	\arrow[""{name=3, anchor=center, inner sep=0}, "\dom", from=1-3, to=2-2]
	\arrow["\eta^{\DD}"', between={0.2}{0.8}, Rightarrow, from=0, to=1]
	\arrow["\alpha", between={0.2}{0.8}, Rightarrow, from=3, to=2]
    \end{cd} \quad = \quad  \id{}
    \end{eqD*}

    \begin{eqD*}
    \begin{cd}*[6.3][4]
        {\C /D_0} \&\& {\C /D_0} \\
	\& \C
	\arrow[""{name=0, anchor=center, inner sep=0}, "{\T_{\DD}^2}", curve={height=-16pt}, from=1-1, to=1-3]
	\arrow[""{name=1, anchor=center, inner sep=0}, "\T_{\DD}"'{inner sep=0.35ex}, curve={height=10pt}, from=1-1, to=1-3]
	\arrow[""{name=2, anchor=center, inner sep=0}, "\dom"', from=1-1, to=2-2]
	\arrow[""{name=3, anchor=center, inner sep=0}, "\dom", from=1-3, to=2-2]
	\arrow["\mu^{\DD}"{inner sep = 0.8ex}, between={0.2}{0.8}, Rightarrow, from=0, to=1]
	\arrow["\alpha", between={0.2}{0.8}, Rightarrow, from=3, to=2]
    \end{cd} \quad = \quad
\begin{cd}*[6.3][4]
	{\C /D_0} \& {\C /D_0} \& {\C /D_0} \\
	\& \C
	\arrow["\T_{\DD}", from=1-1, to=1-2, curve={height=-8pt}]
	\arrow["\alpha"', shift left=4, Rightarrow, from=1-2, to=1-1, xshift=0.5ex]
	\arrow["\dom"', from=1-1, to=2-2]
	\arrow["\T_{\DD}", from=1-2, to=1-3, curve={height=-8pt}]
	\arrow["\alpha"', shift left=4, Rightarrow, from=1-3, to=1-2,xshift= -0.5ex]
	\arrow["\dom"{description}, from=1-2, to=2-2]
	\arrow["\dom", from=1-3, to=2-2]
\end{cd}
\end{eqD*}
Indeed, the two equalities are precisely given by the two upper commutative triangles formed by the universal property of the pullback in the construction of $\eta$ and $\gamma$ respectively.

Finally, notice that $\alpha$ is a \dfn{cartesian natural transformation}, i.e.\ all naturality squares for $\alpha$ are pullbacks. Indeed, given any morphism 
\begin{cd}*[2.5][-1]
    C \&\& {C'} \\
	\& {D_0}
	\arrow["f", from=1-1, to=1-3]
	\arrow["\gamma"'{pos=0.35}, from=1-1, to=2-2]
	\arrow["{\gamma'}"{pos=0.35}, from=1-3, to=2-2]
\end{cd}
in $\C/{D_0}$, the associated naturality square for $\alpha$ is
\begin{cd}
    {D_1\x[D_0] C} \& C \\
	{D_1\x[D_0] C'} \& {C'}
	\arrow["{\pi_2}", from=1-1, to=1-2]
	\arrow["{\id{}\x f}"', from=1-1, to=2-1]
	\arrow["f", from=1-2, to=2-2]
	\arrow["{\pi_2}"', from=2-1, to=2-2]
\end{cd}
This is a pullback square by the pullbacks lemma, since considering the composite
\begin{cd}
    C \& {C'} \& {D_0} \\
	\& {D_0}
	\arrow["f", from=1-1, to=1-2]
	\arrow["\gamma"', from=1-1, to=2-2]
	\arrow["{\gamma'}", from=1-2, to=1-3]
	\arrow["{\gamma'}"{description}, from=1-2, to=2-2]
	\arrow[equals, from=1-3, to=2-2]
\end{cd}
in $\C/{D_0}$ we obtain
\begin{cd}
    {D_1\x[D_0] C} \& C \\
	{D_1\x[D_0] C'} \& {C'} \\
	{D_1} \& {D_0}
	\arrow["{\pi_2}", from=1-1, to=1-2]
	\arrow["{\id{}\x f}"', from=1-1, to=2-1]
	\arrow["f"', from=1-2, to=2-2]
	\arrow["\gamma", curve={height=-20pt}, from=1-2, to=3-2]
	\arrow["{\pi_2}"', from=2-1, to=2-2]
	\arrow["{\pi_1}"', from=2-1, to=3-1]
	\arrow["{\gamma'}"', from=2-2, to=3-2]
	\arrow["s"', from=3-1, to=3-2]
\end{cd}
and both the bottom square and the outer square are pullback diagrams.
\end{remark}

\begin{prop}\label{propcomorphismofmonads}
    Let $D_0\in \C$ and let $\TT=(\C/{D_0},T,\eta,\mu)$ be a monad on the slice $\C/{D_0}$. A natural transformation
    \begin{cd}[4.5]
        {\C/{D_0}} \& {\C/{D_0}} \\
	\C
	\arrow["{T}", from=1-1, to=1-2]
	\arrow["\dom"', from=1-1, to=2-1]
	\arrow["\alpha"', shift left=5, xshift=-2ex, Rightarrow, from=1-2, to=1-1, shorten <= 2ex, shorten >= 1.7ex]
	\arrow["\dom", from=1-2, to=2-1]
    \end{cd}
    that satisfies the compatibility axioms with $\eta$ and $\mu$
    \begin{eqD*}
    \begin{cd}*[3.7][3.5]
        {\C /D_0} \&\& {\C /D_0} \\[3ex]
	\& \C
	\arrow[""{name=0, anchor=center, inner sep=0}, curve={height=-16pt}, equals, from=1-1, to=1-3]
	\arrow[""{name=1, anchor=center, inner sep=0}, "T"', curve={height=12pt}, from=1-1, to=1-3]
	\arrow[""{name=2, anchor=center, inner sep=0}, "\dom"', from=1-1, to=2-2]
	\arrow[""{name=3, anchor=center, inner sep=0}, "\dom", from=1-3, to=2-2]
	\arrow["\eta"', between={0.2}{0.8}, Rightarrow, from=0, to=1]
	\arrow["\alpha", between={0.2}{0.8}, Rightarrow, from=3, to=2]
    \end{cd} \quad = \quad  \id{}
    \end{eqD*}

    \begin{eqD*}
    \begin{cd}*[6.3][4]
        {\C /D_0} \&\& {\C /D_0} \\
	\& \C
	\arrow[""{name=0, anchor=center, inner sep=0}, "{T^2}", curve={height=-16pt}, from=1-1, to=1-3]
	\arrow[""{name=1, anchor=center, inner sep=0}, "T"', curve={height=12pt}, from=1-1, to=1-3]
	\arrow[""{name=2, anchor=center, inner sep=0}, "\dom"', from=1-1, to=2-2]
	\arrow[""{name=3, anchor=center, inner sep=0}, "\dom", from=1-3, to=2-2]
	\arrow["\mu", between={0.2}{0.8}, Rightarrow, from=0, to=1]
	\arrow["\alpha", between={0.2}{0.8}, Rightarrow, from=3, to=2]
    \end{cd} \quad = \quad
\begin{cd}*[6.3][4]
	{\C /D_0} \& {\C /D_0} \& {\C /D_0} \\
	\& \C
	\arrow["T", from=1-1, to=1-2, curve={height=-8pt}]
	\arrow["\alpha"', shift left=4, Rightarrow, from=1-2, to=1-1, xshift=0.5ex]
	\arrow["\dom"', from=1-1, to=2-2]
	\arrow["T", from=1-2, to=1-3, curve={height=-8pt}]
	\arrow["\alpha"', shift left=4, Rightarrow, from=1-3, to=1-2,xshift= -0.5ex]
	\arrow["\dom"{description}, from=1-2, to=2-2]
	\arrow["\dom", from=1-3, to=2-2]
\end{cd}
\end{eqD*}
    is the same thing as a comorphism of monads 
    $$(\dom,\alpha)\:(\C/{D_0},\TT,\eta,\mu)\to (\C,\Id{\C},\id{},\id{})$$
    
    Moreover, $(\dom,\alpha)$ induces a functor
    $$\C\iso \Algs{\Id{\C}}\to \Algs{\TT}.$$
\end{prop}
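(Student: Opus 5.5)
The first part is a direct unwinding of the definition of comorphism of monads. A comorphism $(\C/{D_0},T)\to(\C,\Id{\C})$ is a pair $(F,\chi)$ with $F\:\C/{D_0}\to\C$ a functor and $\chi\:F T\Rightarrow \Id{\C}F$. Taking $F=\dom$, the datum $\chi$ is exactly a natural transformation $\dom\c T\Rightarrow\dom$, which is the shape of $\alpha$. The axioms for a comorphism are the duals of those in \defx\ref{defmormnd}; we write them out with $S=\Id{\C}$, $\eta^S=\id{}$ and $\mu^S=\id{}$.

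The unit axiom then says that $\alpha$ whiskered with $\dom\,\eta$ equals the identity of $\dom$; in components, $\alpha_\gamma\c\dom(\eta_\gamma)=\id{}$. The multiplication axiom says that $\alpha\c\dom\mu$ equals the pasting $\alpha\c\alpha_{T}$; in components, $\alpha_\gamma\c\dom(\mu_\gamma)=\alpha_\gamma\c\alpha_{T\gamma}$. These are precisely the two displayed compatibility axioms. So the correspondence is a bijection: the pair $(\dom,\alpha)$ satisfies the comorphism axioms if and only if $\alpha$ satisfies the stated compatibilities. The only care needed is in orienting the dual axioms correctly, since the 2-cells point the other way from \defx\ref{defmormnd}.

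For the second part we use the machinery of subsection \ref{subsec:monad-morphisms-comorphisms}. The functor $\dom\:\C/{D_0}\to\C$ is a left adjoint: its right adjoint is $G=-\times D_0$, sending $X$ to the projection $X\times D_0\to D_0$. The unit at $\gamma\:C\to D_0$ is $\langle\id{C},\gamma\rangle$, and the counit at $X$ is the projection $X\times D_0\to X$. Hence $(\dom,\alpha)$ lies in $\Mndcoadj$. Taking its mate gives a morphism of monads $(G,\alpha^\sharp)\:(\C,\Id{\C})\to(\C/{D_0},T)$, and this induces a functor $\Algs{\Id{\C}}\to\Algs{\TT}$.

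It remains to note that $\Algs{\Id{\C}}\iso\C$. An $\Id{\C}$-algebra $a\:X\to X$ must satisfy $a\c\id{}=\id{}$ by the unit axiom, so the only algebra structure on $X$ is the identity. Concretely, the resulting functor sends $X$ to $X\times D_0\to D_0$ with structure map
\[
T(X\times D_0)\xrightarrow{\langle \alpha,\ \gamma_{T}\rangle} X\times D_0 ,
\]
where $\alpha$ is composed with the counit projection. This structure map is the analogue of the trivial action.

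No step is a genuine obstacle. The only point requiring attention is checking the direction conventions for the mate and the dualized axioms. Everything else follows formally from the calculus of mates, which is already recalled in the paper.
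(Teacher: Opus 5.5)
Your proposal is correct and follows the paper's own argument. Like the paper, you read the triangle filled by $\alpha$ as a comorphism square over $(\C,\Id{\C})$, so that the comorphism axioms become exactly the two compatibility conditions. You then use that $\dom$ is left adjoint to $X\mapsto(X\times D_0\to D_0)$, which puts $(\dom,\alpha)$ in $\Mndcoadj$ and gives the functor via $\Alg(-)$.

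Your extra details are consistent with the paper's later explicit construction of the trivial-action algebra. These are the explicit unit and counit, the mate $\langle \pi_X\circ\alpha_{GX},\gamma_T\rangle$, and the check that $\Alg(\Id{\C})\cong\C$.
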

\begin{proof}
    The triangle that $\alpha$ fills may be seen as a square with $\Id{\C}$ below. It is readily seen that the axioms of comorphism of monads then precisely match the compatibility axioms with $\eta$ and $\mu$ drawn above. 
The last part of the statement is an application of the functoriality of $\Alg(-)\:{(\Mndcoadj)}\op\to \Cat$. Indeed, $\dom\:\C/{D_0}\to \C$ is a left adjoint, of the functor $\C\to \C/{D_0}$ that sends $C\in \C$ to the projection on second component from the product $C\x D_0$. So $(\dom,\alpha)$ is a morphism in $\Mndcoadj$. The functor $\Alg(-)$ then sends $(\dom,\alpha)$ to a functor $\C\iso \Alg(\Id{\C})\to \Alg(\TT)$, as desired.
\end{proof}

We can thus rephrase the results above in this section as follows.

\begin{cor} \label{comorp}
    Let $\DD=($\begin{cd}*
    D_1 \arrow[r,shift left=1.7,"s"]\arrow[r,shift right=1.7,"t"']\& D_0 \arrow[l,"i"{description}]
    \end{cd}$)$ be an internal category in $\C$. Then the domain functor $\dom\:\C/{D_0}\to \C$ extends to a comorphism of monads 
    $$(\dom,\alpha)\:(\C/{D_0},\T_{\DD},\eta^{\DD},\mu^{\DD})\to (\C,\Id{\C},\id{},\id{})$$
    from the associated generalized action monad $\T_{\DD}$ to the identity, with $\alpha$ a cartesian natural transformation. Moreover, $(\dom,\alpha)$ induces a functor
    $$\C\iso \Algs{\Id{\C}}\to \Algs{\T_{\DD}}.$$
\end{cor}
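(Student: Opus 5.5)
This corollary is a repackaging of \remx\ref{remintrodalpha} together with \prox\ref{propcomorphismofmonads}, so the plan is to assemble these two ingredients.

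First, take the natural transformation $\alpha\colon \dom\circ\T_{\DD}\Rightarrow\dom$ of \remx\ref{remintrodalpha}. Its component at $\gamma\colon C\to D_0$ is the pullback projection $\pi_2\colon D_1\times_{D_0}C\to C$. Naturality in a morphism $f$ of $\C/D_0$ amounts to $\pi_2\circ(\id{}\times f)=f\circ\pi_2$, which holds by definition of $\id{}\times f$ via the universal property.

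Next, verify the two compatibility axioms with $\eta^{\DD}$ and $\mu^{\DD}$.
\begin{itemize}
\item The unit axiom reads $\pi_2\circ\eta^{\DD}_\gamma=\id{C}$. This is the upper triangle in the definition of $\eta^{\DD}_\gamma=\langle i\gamma,\id{C}\rangle$.
\item The multiplication axiom reads $\pi_2\circ\mu^{\DD}_\gamma=\pi_2\circ\pi_2$ as maps $D_1\times_{D_0}(D_1\times_{D_0}C)\to C$. Unwinding $\mu^{\DD}_\gamma=[\mu^{\DD}_\gamma]\circ j$, the map $[\mu^{\DD}_\gamma]$ satisfies $\pi_2\circ[\mu^{\DD}_\gamma]=\pi_2$ by construction. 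By \lemx\ref{lem:assoc-pb}, $j$ commutes with the projections to $C$, so $\pi_2\circ j=\pi_2\circ\pi_2$.
\end{itemize}
This step requires the most care, since one must track that the isomorphism $\psi$ of \lemx\ref{lem:assoc-pb} is compatible with the outer projections. Even so, it is only a bookkeeping point about the pasted pullbacks.

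Now apply \prox\ref{propcomorphismofmonads}. A natural transformation satisfying these two axioms is exactly a comorphism of monads $(\dom,\alpha)\colon(\C/D_0,\T_{\DD},\eta^{\DD},\mu^{\DD})\to(\C,\Id{\C},\id{},\id{})$.

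Cartesianness of $\alpha$ was shown in \remx\ref{remintrodalpha} by the pullbacks lemma. The naturality square for $f\colon\gamma\to\gamma'$ is the top square in a pasting whose bottom square and outer rectangle are both pullbacks, so the top square is a pullback too.

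Finally, the induced functor $\C\cong\Algs{\Id{\C}}\to\Algs{\T_{\DD}}$ follows from the second half of \prox\ref{propcomorphismofmonads}, using that $\dom$ is left adjoint to $C\mapsto(C\times D_0\to D_0)$. Through the mate construction, this functor sends $C$ to the projection $C\times D_0\to D_0$ with action given by $t$ on the $D_0$-coordinate. This is the trivial action, though describing it is not needed for the statement.
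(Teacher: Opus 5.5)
Your proposal is correct and follows the paper's route: the paper presents this corollary as a direct rephrasing of Remark~\ref{remintrodalpha}, which gives the existence of $\alpha$, its compatibility axioms and its cartesianness, combined with Proposition~\ref{propcomorphismofmonads}, which identifies such an $\alpha$ with a comorphism $(\dom,\alpha)$ to $\Id{\C}$ and, since $\dom$ is a left adjoint, yields the functor $\C\iso\Algs{\Id{\C}}\to\Algs{\T_{\DD}}$. In the multiplication axiom you also check that the isomorphism $j$ of Lemma~\ref{lem:assoc-pb} commutes with the projections to $C$; the paper glosses over this detail, and your check is correct.
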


\begin{cons}
    We give an explicit description of the functor $\C\iso \Algs{\Id{\C}}\to \Algs{\T_{\DD}}$ associated to a starting internal category $\DD$ in $\C$.

    It is straightforward to show that this functor sends $X\in \C$ to the $\T_{\DD}$-algebra given by $D_0 \x X \aar{\pi_1} D_0$ together with the unique morphism $t\x \pi_2$ induced by the universal property of the product $D_0 \x X$ as in the following diagram  
\begin{cd}[6][6]
    {D_1 \x_{D_0} (D_0 \x X)} \& D_0 \x X \& {} \\
	{D_1} \arrow[rd, bend right=20,"t"'] \& {D_0 \x X} \& {X} \\
	{} \& {D_0} \& {}
	\arrow["\alpha_{D_0}\x X", from=1-1, to=1-2]
	\arrow["{\pi_1}"', from=1-1, to=2-1]
	\arrow["{\exists !t \x \pi_2}", dashed, from=1-1, to=2-2]
	\arrow["{\pi_2}", from=1-2, to=2-3, bend left=20]
	\arrow[from=2-2, to=2-3]
	\arrow[from=2-2, to=3-2]
\end{cd}
    Moreover, it is readily seen that the functor sends a morphism $X \aar{f} Y$ in $\C$ to $D_0 \x f$. 
\end{cons}

\begin{remark}
    In the case of actions of a group object $G$ in $\C$, this result recovers the important fact that every object $X$ of $\C$ can be equipped with a trivial $G$-action, given by the second projection $\pi_2\: G\x X\to X$.

    Interestingly, we will show that this property is a key property in determining (generalized) action monads.
\end{remark}

We have seen that the natural transformation $\alpha$ associated to an internal category forms a morphism $(\dom,\alpha)$ in $\Mndcoadj$. In fact, we show that more is true.

\begin{prop}\label{propalphaisimageofQ}
    The morphism $(\dom,\alpha)$ in $\Mndcoadj$ associated to an internal category $\DD$ in $\C$ coincides with the image along $\T\:\Cat(\C)\to \Mndcoadj$ of the unique internal functor from $\DD$ to the terminal internal category $\1$ in $\C$.
\end{prop}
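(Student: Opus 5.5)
We compute both sides explicitly and compare them componentwise. First, we identify the terminal internal category $\1$ in $\C$: both its object of objects and its object of arrows are the terminal object $1$, and all structure maps are identities. The unique internal functor $!\:\DD\to\1$ is given by $!_0\:D_0\to 1$ and $!_1\:D_1\to 1$.

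Next, we identify the monad $\T_{\1}$ on $\C/1$. For $X\to 1$, the pullback $1\x[1] X$ is canonically $X$, so under the standard isomorphism $\C/1\iso\C$ (given by $\dom$) the functor $\T_{\1}$ is the identity. By \lemx\ref{lem:assoc-pb} and the definitions in \thex\ref{thm:action-monad-algebras}, its unit and multiplication are the identities. Here one must fix the convention that $\C/1$ is identified with $\C$ via $\dom$; with canonical choices of the pullbacks over $1$, $(\C/1,\T_{\1})$ literally becomes $(\C,\Id{\C},\id{},\id{})$. Otherwise the statement is read up to this canonical isomorphism in $\Mndcoadj$.

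Then we unwind $\T_{!}$ as in the proof of \thex\ref{theorTisafunctor}. Its underlying functor is $!_0\c -\:\C/D_0\to\C/1$, which under the identification $\C/1\iso\C$ is exactly $\dom$. The component of $\tau_{!}$ at $\gamma\:C\to D_0$ is
\[
!_1\x\id{}\:D_1\x[D_0] C\too 1\x[1] C\iso C.
\]
By the universal property of the pullback $1\x[1]C$, this map is determined by its second projection, which is $\pi_2$. Hence $(\tau_{!})_\gamma=\pi_2=\alpha_\gamma$, as described in \remx\ref{remintrodalpha}. This gives $\T_{!}=(\dom,\alpha)$.

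The only delicate point is the bookkeeping of the identification $\C/1\iso\C$ and of the canonical isomorphisms $1\x[1]C\iso C$. I would handle this first, by choosing these pullbacks to be $C$ itself with projections $(!,\id{})$. Once that is done, every remaining check is a direct computation of components.
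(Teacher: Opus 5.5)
Your proposal is correct and follows essentially the same route as the paper: both identify $\T_{\1}$ with the identity monad via $\C/1\iso\C$, note that $!_0\circ -$ becomes $\dom$ under this identification, and compute $(\tau_{!})_\gamma = {!_1}\times\id{C}\colon D_1\times_{D_0}C\to 1\times_1 C\iso C$ as the second projection $\pi_2=\alpha_\gamma$. Your explicit choice of $1\times_1 C = C$ simply spells out bookkeeping that the paper leaves to the ``well-known'' identification of $\C$ with $\C/1$.
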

\begin{proof}
It is straightforward to see that the image $\T_{\1}$ of the terminal internal category along $\T$ is the identity. 
    Consider now the unique internal functor $!$ from $\DD$ to the terminal internal category $\1$ in $\C$ is exhibited by
    \begin{cd}[5.5]
{D_1 \times_{D_0} D_1} \arrow[r,"{\comp}"] \arrow[d,"{!}"'] \&[-1ex]
{D_1} \arrow[r,shift left=1.6ex,"s"] \arrow[r,shift right=1.8ex,"t"'] \arrow[d,"{!}"'] \&[0.5ex]
{D_0} \arrow[l,"{i}"{description}] \arrow[d,"{!}"] \\
{1} \arrow[r,"{}", equals] \&
{1} \arrow[r,shift left=1.6ex,"", equals] \arrow[r,shift right=1.8ex,""', equals] \&
{1} \arrow[l, equals]
\end{cd}
Its image along $\T$ is then given by the functor $! \c -$ together with the natural transformation $\tau_{!}$ whose component on $C \aar{\gamma} D_0$ is the morphism $! \times \id{C}\: D_1 \times_{D_0} C \to 1 \times_{1} C$. Thanks to the well-known equivalence of categories between $\C$ and its slice over the terminal object $\C/1$, the functor $! \c -$ is the same as $\dom{}$. Moreover, by construction $(\tau_{!})_{\gamma}$ is equal to the second projection $\pi_2\: D_1 \x_{D_0} C \to C$ and thus it is equal to $\alpha_{\gamma}$. This shows that the image of $!$ along $\T$ is precisely $(\dom{}, \alpha)$.
\end{proof}

The proposition above allows us to give a deeper explanation of why the natural transformation $\alpha$ is cartesian.

\begin{prop}
    Let $F=(F_0,F_1)\:\DD\to \EE$ be an internal functor in $\C$. Then the image of $F$ along $\T\:\Cat(\C)\to \Mndcoadj$ is a comorphism of monads $(F_0\c -, \tau_F)$ with $\tau_F$ a cartesian natural transformation.
\end{prop}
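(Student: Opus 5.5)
The plan is to reduce cartesianness of $\tau_F$ to the cartesianness of the natural transformations $\alpha$ already established in \remx\ref{remintrodalpha}, using the pullback cancellation lemma. Fix a morphism $f\:C\to C'$ in $\C/{D_0}$ from $\gamma$ to $\gamma'$. By construction of $\T_F$ in \thex\ref{theorTisafunctor}, the naturality square of $\tau_F$ at $f$ is
\begin{cd}
    {D_1\x[D_0] C} \& {E_1\x[E_0] C} \\
	{D_1\x[D_0] C'} \& {E_1\x[E_0] C'}
	\arrow["{F_1\x \id{}}", from=1-1, to=1-2]
	\arrow["{\id{}\x f}"', from=1-1, to=2-1]
	\arrow["{\id{}\x f}", from=1-2, to=2-2]
	\arrow["{F_1\x \id{}}"', from=2-1, to=2-2]
\end{cd}
Here $C$ and $C'$ are regarded over $E_0$ via $F_0\gamma$ and $F_0\gamma'$. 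We must show that this square is a pullback in $\C/{E_0}$. Since pullbacks in a slice are computed as in $\C$, it suffices to show it is a pullback in $\C$.

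Paste to the right of this square the square
\begin{cd}
    {E_1\x[E_0] C} \& C \\
	{E_1\x[E_0] C'} \& {C'}
	\arrow["{\pi_2}", from=1-1, to=1-2]
	\arrow["{\id{}\x f}"', from=1-1, to=2-1]
	\arrow["f", from=1-2, to=2-2]
	\arrow["{\pi_2}"', from=2-1, to=2-2]
\end{cd}
This is exactly the naturality square of the transformation $\alpha$ for the monad $\T_{\EE}$ at the morphism $f\:(C,F_0\gamma)\to (C',F_0\gamma')$ of $\C/{E_0}$. By \remx\ref{remintrodalpha}, applied to $\EE$, it is a pullback.

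Next, compute the outer rectangle. Since $F_1\x\id{}$ is defined as the map induced on pullbacks by $F_1$ and $\id{C}$, we have $\pi_2\c(F_1\x \id{})=\pi_2$. Hence the outer rectangle is the square with horizontal maps $\pi_2\:D_1\x[D_0]C\to C$ and $\pi_2\:D_1\x[D_0]C'\to C'$, and vertical maps $\id{}\x f$ and $f$. This is the naturality square of $\alpha$ for $\T_{\DD}$ at $f$, so again by \remx\ref{remintrodalpha} it is a pullback.

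The pullbacks lemma in its cancellation form states that if the right square and the outer rectangle are pullbacks, then so is the left square. This gives the claim.

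The only step needing care is the bookkeeping in the identification of the right-hand square. One must check that $E_1\x[E_0]C$ here is the pullback of $F_0\gamma$ along $s'$, matching \conx\ref{consTD} for $\EE$, and that the vertical maps agree. Both are immediate from the definitions.

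Alternatively, one could note that $\alpha_{\DD}=\tau_{!}$ by \prox\ref{propalphaisimageofQ}, together with functoriality $\tau_{!_{\EE}\c F}=\tau_{!_{\EE}}\ast\tau_F$. The direct pasting argument above is cleaner.
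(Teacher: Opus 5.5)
Your proof is correct, and it takes a different route from the paper.

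The paper checks the universal property of the naturality square directly. Given $r\colon A\to E_1\times_{E_0}C$ and $q\colon A\to D_1\times_{D_0}C'$ with $(\id{}\times f)\circ r=(F_1\times\id{})\circ q$, it builds $\phi\colon A\to D_1\times_{D_0}C$ from $\pi_1 q$ and $\pi_2 r$, using the pullback that defines $D_1\times_{D_0}C$. It then verifies by projecting that $\phi$ factors both $r$ and $q$.

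You instead use the identity $\alpha^{\DD}_\gamma=\alpha^{\EE}_{F_0\gamma}\circ\dom((\tau_F)_\gamma)$. This exhibits the naturality square of $\alpha^{\DD}$ at $f$ as the pasting of the square of $\tau_F$ with that of $\alpha^{\EE}$, and you conclude by pullback cancellation from Remark~\ref{remintrodalpha}.

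What your route buys:
\begin{itemize}
\item It is shorter and more structural.
\item It generalizes immediately. For any morphism $(h,\lambda)$ in $\GAMnd{\C}$, the compatibility $\beta_{h\circ\gamma}\circ\dom(\lambda_\gamma)=\alpha_\gamma$ of Remark~\ref{remcompatibilitymorphismActMnd}, together with cartesianness of $\alpha$ and $\beta$, forces $\lambda$ to be cartesian by the same cancellation.
\end{itemize}

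What the paper's route buys: it is self-contained and does not presuppose that $\alpha$ is cartesian. This fits the paper's exposition, which introduces the proposition as a deeper explanation of why $\alpha$ is cartesian (take $F$ to be $\DD\to\1$ and use Proposition~\ref{propalphaisimageofQ}). Under your proof that reading loses its force, since cartesianness of $\alpha^{\DD}$ is an input. There is no logical circularity, however, because Remark~\ref{remintrodalpha} establishes it independently via the pullbacks lemma.
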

\begin{proof}
Let $f\:C \to C'$ be a morphism in $\C /D_0$ from $\gamma\; C \to D_0$ to $\gamma' \: C' \to D_0$. The naturality square for $\tau_F$ on the morphism $f$ is the following square 
\begin{cd}
    {D_1 \times_{D_0} C} \& {E_1 \times_{E_0} C} \\
	{D_1 \times_{D_0} C'} \& {E_1 \times_{E_0} C'.}
	\arrow["{F_1 \times \id{C}}", from=1-1, to=1-2]
	\arrow["{\id{D_1} \times f}"', from=1-1, to=2-1]
	\arrow["{\id{E_1} \times f}", from=1-2, to=2-2]
	\arrow["{F_1 \times \id{C'}}"', from=2-1, to=2-2]
\end{cd}
In order to prove that such square is a pullback, let us consider morphisms $A \aar{r} E_1 \times_{E_0} C$ and $A \aar{q} D_1 \times_{D_0} C'$ such that $(\id{E_1} \times f) \c r= (F_1 \times \id{C'}) \c q$. Thanks to the commutativity of the following pasting diagram 
\begin{cd}[5][5]
  A \& {E_1 \times_{E_0} C} \&\& C \\
	\& {E_1 \times_{E_0} C'} \& {C'} \\
	{D_1 \times_{D_0} C'} \& {D_1} \&\& {D_0}
	\arrow["r", from=1-1, to=1-2]
	\arrow["q"', from=1-1, to=3-1]
	\arrow[from=1-2, to=1-4]
	\arrow["{\id{E_1} \times f}", from=1-2, to=2-2]
	\arrow["f"', from=1-4, to=2-3]
	\arrow["\gamma", from=1-4, to=3-4]
	\arrow[from=2-2, to=2-3]
	\arrow["{\gamma'}", from=2-3, to=3-4]
	\arrow["{F_1 \times \id{C'}}", from=3-1, to=2-2]
	\arrow[from=3-1, to=2-3]
	\arrow[from=3-1, to=3-2]
	\arrow["s"', from=3-2, to=3-4]
\end{cd}
there exists a unique morphism $A \aar{\phi} D_1 \times_{D_0} C$ such that the top and the left square of the following diagram commute
\begin{cd}[5][5]
    A \&[-2ex] {E_1 \times_{E_0} C} \& \\[-2ex]
	{D_1 \times_{D_0}C'} \& {D_1 \times_{D_0} C} \& C \\
	\& {D_1} \& {D_0}
	\arrow["r", from=1-1, to=1-2]
	\arrow["q"', from=1-1, to=2-1]
	\arrow["\phi", from=1-1, to=2-2]
	\arrow[from=1-2, to=2-3]
	\arrow[from=2-1, to=3-2]
	\arrow[from=2-2, to=2-3]
	\arrow[from=2-2, to=3-2]
	\arrow["\lrcorner"{anchor=center, pos=0.125}, draw=none, from=2-2, to=3-3]
	\arrow["\gamma", from=2-3, to=3-3]
	\arrow["s"', from=3-2, to=3-3]
\end{cd}
It is then straightforward to check, projecting to $E_1$ and $C'$, that the same morphism $\phi$ is such that $(F_1\times \id{C}) \c \phi =r$ and $(\id{D_1} \times f) \c \phi=q$. So the natural transformation $\tau_F$ is cartesian.
   \end{proof}

We now define the subcategory of $\Mndcoadj$ that will be proved to correspond to generalized action monads.

\begin{defne}
    We define the category $\GAMnd{\C}$ to be given by the following data:
    \begin{itemize}
  \item \emph{objects:} triples $(D_0,\TT,\alpha)$ where $D_0\in \C$ and $\TT$ is a monad on $\C/{D_0}$ such that $\dom\:\C/{D_0}\to \C$ extends to a comorphism of monads $(\dom,\alpha)\:\TT\to \Id{\C}$ and $\alpha$ is a cartesian natural transformation;
  \item \emph{morphisms $(D_0,\TT,\alpha)\to (E_0,\TT',\beta)$:} pairs $(h,\lambda)$ where $h\:D_0\to E_0$ is a morphism in $\C$ such that $h\c -\: \C/{D_0}\to \C/{E_0}$ extends to a comorphism of monads $(h\c -, \lambda)\:\TT\to \TT'$ with 
  \begin{cd}[5][5]
      {\C/{D_0}} \& {\C/{E_0}} \\
	{\C/{D_0}} \& {\C/{E_0}}
	\arrow["{h\c -}", from=1-1, to=1-2]
	\arrow[""{name=0, anchor=center, inner sep=0}, "T"', from=1-1, to=2-1]
	\arrow[""{name=1, anchor=center, inner sep=0}, "{T'}", from=1-2, to=2-2]
	\arrow["{h\c -}"', from=2-1, to=2-2]
	\arrow["\lambda"', between={0.2}{0.8}, Rightarrow, from=0, to=1]
  \end{cd}
  and moreover $\lambda$ makes the following diagram of comorphisms of monads commute:
  \begin{eqD}{compatibilitymorphismsActMnd}
  \begin{cd}*[2]
    {\TT} \\
	\& {\Id{\C}} \\
	{\TT'}
	\arrow["{(\dom,\alpha)}", from=1-1, to=2-2]
	\arrow["{(h\c -, \lambda)}"', from=1-1, to=3-1]
	\arrow["{(\dom,\beta)}"', from=3-1, to=2-2]
\end{cd}
\end{eqD}
\item \emph{composition of $(h,\lambda^h)$ and $(k,\lambda^k)$} given by $k\c h$ and the pasting $\lambda^h|\lambda^k$;
\item \emph{identity on $(D_0,\TT,\alpha)$} given by $(\id{D_0},\id{})$. 
\end{itemize}
These data are readily seen to form a category.
\end{defne}

\begin{remark}\label{remcompatibilitymorphismActMnd}
    Diagram \refs{compatibilitymorphismsActMnd} is equivalent to the following equality of natural transformations:
    \begin{eqD*}
        \begin{cd}*
           {\C/D_0} \& {\C/D_0} \\
	{\C/E_0} \& {\C/E_0} \\
	\& \C
	\arrow["T", from=1-1, to=1-2]
	\arrow["{h \circ -}"', from=1-1, to=2-1]
	\arrow["\lambda"', Rightarrow, from=1-2, to=2-1, shorten <= 3ex, shorten >= 3ex]
	\arrow["{h\circ -}", from=1-2, to=2-2]
	\arrow["{T'}"', from=2-1, to=2-2]
	\arrow[""{name=0, anchor=center, inner sep=0}, "\dom"', from=2-1, to=3-2]
	\arrow["\dom", from=2-2, to=3-2]
	\arrow["\beta", between={0}{0.8}, Rightarrow, from=2-2, to=0]
        \end{cd}
        \quad = \quad
        \begin{cd}*
            {\C/D_0} \& {\C/D_0} \\
	\& \C
	\arrow["T", from=1-1, to=1-2]
	\arrow[""{name=0, anchor=center, inner sep=0}, "\dom"', from=1-1, to=2-2]
	\arrow["\dom", from=1-2, to=2-2]
	\arrow["\alpha", between={0}{0.8}, Rightarrow, from=1-2, to=0]
        \end{cd}
    \end{eqD*}
\end{remark}

\begin{remark}
    $\GAMnd{\C}$ is a non-full subcategory of $\Mndcoadj$ (and also of $\Mndco$), in the sense that we have a faithful functor $\GAMnd{\C}\ito \Mndcoadj$. 
\end{remark}

We now prove the main theorem of this section, which characterizes generalized action monads.

\begin{teo}\label{theormainequivalence}
    The functor $\T\:\Cat(\C)\to \Mndcoadj$ factors through $\GAMnd{\C}\ito \Mndcoadj$. Moreover, $\T\:\Cat(\C)\to \GAMnd{\C}$ yields an equivalence of categories
    $$\T\:\Cat(\C)\simeq \GAMnd{\C}\:\Psi.$$
\end{teo}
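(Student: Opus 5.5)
The factorization is immediate from what precedes it. By Corollary~\ref{comorp} and Remark~\ref{remintrodalpha}, $(D_0,\T_{\DD},\alpha)$ is an object of $\GAMnd{\C}$, since $\alpha$ is cartesian. For an internal functor $F$, Proposition~\ref{propalphaisimageofQ} identifies $(\dom,\alpha)$ with $\T(!)$. Because $!_{\EE}\c F=!_{\DD}$ and $\T$ is a functor (Theorem~\ref{theorTisafunctor}), we get $(\dom,\beta)\c\T_F=(\dom,\alpha)$, which is diagram~\refs{compatibilitymorphismsActMnd}. So $\T$ lands in $\GAMnd{\C}$. It is faithful there, since $\T_F$ recovers $F_0$ as the underlying functor and recovers $F_1$ as $(\tau_F)_{\id{D_0}}$ under $D_1\x[D_0]D_0\iso D_1$.

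\emph{Construction of $\Psi$.} Given $(D_0,\TT,\alpha)$, put $D_1:=\dom T(\id{D_0})$ and $t:=T(\id{D_0})$ as a map $D_1\to D_0$. Set $s:=\alpha_{\id{D_0}}\:D_1\to D_0$ and $i:=\eta_{\id{D_0}}$. The unit axiom for $\alpha$ gives $s i=\id{}$, and $\eta$ being a map over $D_0$ gives $t i=\id{}$.

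The key lemma is that cartesianness of $\alpha$ forces $T$ to be the generalized action functor. For $\gamma\:C\to D_0$, regard $\gamma$ as a morphism $\gamma\to\id{D_0}$ in $\C/D_0$. The naturality square of $\alpha$ at $\gamma$ is a pullback, so $\dom T(\gamma)\iso D_1\x[D_0]C$, the pullback of $\gamma$ along $s$. Under this iso the structure map to $D_0$ is $t\pi_1$, because $T(\gamma)$ is a morphism over $D_0$ into $T(\id{D_0})$. These isos are natural, giving $T\iso\T_{\DD}$ as functors, with $\alpha$ corresponding to $\pi_2$.

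For composition, apply the lemma to $T(\id{D_0})$ itself. This gives $T^2(\id{D_0})\iso D_1\x[D_0]D_1$, and we define $\comp:=\mu_{\id{D_0}}$ composed with this iso. The $\mu$-axiom for $\alpha$ gives $s\c\comp=s\pi_2$, and $\mu$ being over $D_0$ gives $t\c\comp=t\pi_1$. Unitality and associativity of $\comp$ follow from the monad laws evaluated at $\id{D_0}$, with $T^3(\id{D_0})$ identified with the triple pullback via the lemma and Lemma~\ref{lem:assoc-pb}.

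The step I expect to be the main obstacle is showing that under $T\iso\T_{\Psi(\TT)}$ the unit and multiplication of $\TT$ at an arbitrary $\gamma$ match those of $\T_{\Psi(\TT)}$, not only at $\id{D_0}$. To handle this I would use naturality of $\eta$ and $\mu$ along the map $\gamma\to\id{D_0}$. This determines $\pi_1\c\mu_\gamma$ as $\comp$ applied to the projections. The axioms for $\alpha$ determine $\pi_2\c\mu_\gamma$ and $\pi_2\c\eta_\gamma$, and these suffice by the universal property of $D_1\x[D_0]C$. The same argument gives $\TT\iso\T_{\Psi(\TT)}$ as objects of $\GAMnd{\C}$.

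On morphisms, $(h,\lambda)$ yields $F_0=h$ and $F_1:=\lambda_{\id{D_0}}$ followed by $\dom T'(h)$, which factors through $E_1$. The compatibility \refs{compatibilitymorphismsActMnd} makes $F_1$ commute with sources, and the comorphism axioms make it commute with identities and composition. The same pullback argument shows $\lambda\iso\tau_F$ naturally. Combined with $\Psi\T\iso\Id{}$, which is immediate since $\T_{\DD}(\id{D_0})=D_1\x[D_0]D_0\iso D_1$, this shows $\T$ is full, faithful and essentially surjective. That gives the equivalence with quasi-inverse $\Psi$.
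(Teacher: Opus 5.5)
Your proof is correct. Its core constructions are the paper's: $D_1=\dom T(\id{D_0})$, $t=T(\id{D_0})$, $s=\alpha_{\id{D_0}}$, $i=\eta_{\id{D_0}}$, $\comp=\mu_{\id{D_0}}$, $F_1=T'(\underline{h})\circ\lambda_{\id{D_0}}$, and the comparison isomorphism $T\cong\T_{\Psi(\TT)}$ obtained from cartesianness of $\alpha$ at $\underline{\gamma}\colon\gamma\to\id{D_0}$. The packaging differs in three places.

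\emph{Factorization.} You derive the triangle \refs{compatibilitymorphismsActMnd} from Proposition~\ref{propalphaisimageofQ} and functoriality of $\T$ applied to $!_{\EE}\circ F=!_{\DD}$. The paper instead checks on components that $\pi_2\circ(\T_F)_\gamma=\pi_2$. Your argument explains why the compatibility holds; the paper's is more elementary.

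\emph{The isomorphism $T\cong\T_{\Psi(\TT)}$.} You front-load it as a lemma and reuse it throughout. In particular you transport $\eta$ and $\mu$ at an arbitrary $\gamma$ using naturality along $\underline{\gamma}$ and the $\alpha$-axioms. This makes explicit what the paper leaves as ``straightforward''.

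\emph{The equivalence.} Instead of exhibiting $\Psi$ as a functor with $\Psi\T=\Id{}$ and $\T\Psi\cong\Id{}$, you show that $\T$ is faithful, full and essentially surjective. This spares you the separate check that $\Psi$ preserves identities and composition. Your fullness computation $\lambda=\tau_F$ is the same pullback computation as the paper's naturality of $\nu$.

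\emph{Ordering.} The comorphism axioms for $(h,\lambda)$ at $\id{D_0}$ give $F_1 i=i'h$ and preservation of composition only once you know two things. First, under your isomorphisms, $\lambda_{\id{D_0}}$, $\lambda_t$ and $T'(\lambda_{\id{D_0}})$ are the maps $F_1\times\id{}$, $F_1\times\id{}$ and $\id{}\times(F_1\times\id{})$. Second, $\eta'$ and $\mu'$ are the unit and composition of $\Psi(\TT')$. So the identification $\lambda\cong\tau_F$ must come before the functor axioms, not after. It needs only that $F_1$ respects sources and targets. Done in that order, your argument replaces the paper's diamond-diagram check via cartesianness of $\beta$.

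\emph{Minor slip.} The second factor of $F_1$ should be $T'(\underline{h})$ for $\underline{h}\colon h\to\id{E_0}$, not $\dom T'(h)$.
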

\begin{proof} 
    We first prove that the functor $\T$ factors through $\GAMnd{\C}$. It suffices to prove that given an internal functor $F\:\DD\to \EE$ exhibited by
    \begin{cd}[5.5]
{D_1 \times_{D_0} D_1} \arrow[r,"{\comp}"] \arrow[d,"{F_1\times F_1}"'] \&[-1ex]
{D_1} \arrow[r,shift left=1.6ex,"s"] \arrow[r,shift right=1.8ex,"t"'] \arrow[d,"{F_1}"'] \&[0.5ex]
{D_0} \arrow[l,"{i}"{description}] \arrow[d,"{F_0}"] \\
{E_1 \times_{E_0} E_1} \arrow[r,"{\comp'}"] \&
{E_1} \arrow[r,shift left=1.6ex,"s'"] \arrow[r,shift right=1.8ex,"t'"'] \&
{E_0} \arrow[l,"{i'}"{description}]
\end{cd}
the comorphism of monads $\T_{F}$ satisfies the additional compatibility required for a morphism of generalized action monads, i.e.\ the condition described in \remx\ref{remcompatibilitymorphismActMnd}. But this is true by construction of $\T_{F}$, since given $C \aar{\gamma} D_0$ in $\C /D_0$, the morphism $(\T_{F})_{\gamma}$ is such that the following diagram commutes 
\begin{cd}
{D_1\times_{D_0}C} \& \\
	{E_1\times_{E_0}C} \& C.
	\arrow["{(\T_{F})_{\gamma}}"', from=1-1, to=2-1]
	\arrow["{\pi_2=\alpha_{\gamma}}", from=1-1, to=2-2]
	\arrow["{\pi_2=\beta_{h\circ\gamma}}"', from=2-1, to=2-2]
    \end{cd}
    
    We now construct the pseudo-inverse $\Psi$ of $\T$. So take $(D_0,T,\alpha)\in \GAMnd{\C}$, with $T=(\C/{D_0},T,\eta,\mu)$. We associate to it the following internal category in $\C$. We set its object of objects to be $D_0\in \C$. We then set the object of morphisms to be $D_1=\dom{(T(\id{D_0}))}$ and the target map $t\:D_1\to D_0$ to be $T(\id{D_0})\:\dom{(T(\id{D_0}))}\to D_0$. The source map $s\:D_1\to D_0$ is instead given by the component $\alpha_{\id{D_0}}\:\dom{(T(\id{D_0}))}\to \dom{(\id{D_0})}$ of $\alpha$ on the identity.

    We then set the identity map $i\:D_0\to D_1=\dom{(T(\id{D_0}))}$ to be $\dom(\eta_{\id{D_0}})$. Notice that indeed $\eta_{\id{D_0}}$ is a morphism in $\C/{D_0}$ as below:
    \begin{cd}[4][3]
        {D_0} \&\&[-1ex] {\dom{T(\id{D_0})}} \\
	    \& {D_0}
	\arrow["i", from=1-1, to=1-3]
	\arrow[equals, from=1-1, to=2-2]
	\arrow["{T(\id{D_0})=t}", from=1-3, to=2-2]
    \end{cd}
    The compatibility of the target map with the identity map is then guaranteed by construction. Moreover, also the source map is compatible with the identity map, because $(\dom,\alpha)$ is a comorphism of monads (thanks to \prox\ref{propcomorphismofmonads}).

    Finally, we build the composition map $\comp\:D_1\x[D_0]D_1\to D_1$ for 
    \begin{cd}*
    D_1 \arrow[r,shift left=1.7,"s"]\arrow[r,shift right=1.7,"t"']\& D_0 \arrow[l,"i"{description}]
    \end{cd}
    as follows. First, notice that the fact that $\alpha$ is a cartesian natural transformation guarantees that $D_1\x[D_0]D_1$ is given by the following pullback square:
    \sq{\dom(T^2(\id{D_0}))}{D_1}{D_1}{D_0}{\alpha_t}{\dom(T(\underline{t}))}{\dom(\underline{t})=t}{\alpha_{\id{D_0}}=s}
    Indeed the square above is the naturality square of $\alpha$ on the morphism $\underline{t}$ in $\C/{D_0}$ given by
    \begin{cd}*[2.5][-1]
    D_1 \&\& {D_0} \\
	\& {D_0}
	\arrow["t", from=1-1, to=1-3]
	\arrow["t"'{pos=0.35}, from=1-1, to=2-2]
	\arrow[equals, from=1-3, to=2-2]
\end{cd}. We then define $\comp\:D_1\x[D_0]D_1\to D_1$ to be $\dom(\mu_{\id{D_0}})$. Notice that indeed $\mu_{\id{D_0}}$ is a morphism in $\C/{D_0}$ as below:
\begin{cd}[3][4]
    {\dom(T^2({\id{D_0}}))} \&[-4ex]\& {D_1} \\[3ex]
	\& {D_0}
	\arrow["{\comp}", from=1-1, to=1-3]
	\arrow["{T(t)=T^2(\id{D_0})}"', from=1-1, to=2-2]
	\arrow["{T(\id{D_0})=t}", from=1-3, to=2-2]
\end{cd}

We prove that we have constructed an internal category
\begin{cd}[7][8]
\dom(T^2(\id{D_0})) \arrow[r,"{\mu_{\id{D_0}}}"] \&[-2ex] {\dom(T(\id{D_0}))} \arrow[r,"{\alpha_{\id{D_0}}}", shift left=1.6ex] \arrow[r,"{T(\id{D_0})}"', shift right=1.8ex] \&[1ex] {D_0} \arrow[l,"{\eta_{\id{D_0}}}"{description}]\\
\end{cd}
in $\C$. We have already shown above that source and target are compatible with the identity morphism. It remains to prove that $\comp$ is unital and associative. Unitality is guaranteed by the unitality of the monad $T$. Indeed, 
we have that 
\[
\dom(\eta_{T(\id{D_0})})
=
\langle i\circ t,\id{D_1}\rangle
:D_1\to D_1\times_{D_0}D_1,
\]
and
\[
\dom(T(\eta_{\id{D_0}}))
=
\langle \id{D_1},i\circ s\rangle
:D_1\to D_1\times_{D_0}D_1.
\]
Therefore the two unit laws
\[
\mu_{\id{D_0}}\circ \eta_{T(\id{D_0})}=\id{T(\id{D_0})},
\qquad
\mu_{\id{D_0}}\circ T(\eta_{\id{D_0}})=\id{T(\id{D_0})}
\]
give precisely
\[
\comp\circ \langle i\circ t,\id{D_1}\rangle=\id{D_1},
\qquad
\comp\circ \langle \id{D_1},i\circ s\rangle=\id{D_1}.
\]

To prove the associativity of $\comp$, we first observe that the cartesianity of $\alpha$ guarantees that $D_1\times_{D_0} D_1 \times_{D_0} D_1$ is given by any of the two isomorphic pullbacks of following diagram (see Lemma \ref{lem:assoc-pb}):
\[
\begin{tikzcd}[row sep=large, column sep=huge]
\dom(T^3(\id{D_0})) \ar[r,"\alpha_{T(t)}"] \ar[d,"\dom(T^2(\underline{t}))"'] & {\dom(T^2(\id{D_0}))} \ar[r,"\alpha_{t}"] \ar[d,"\dom(T(\underline{t}))"'] & D_1 \ar[d,"t"] \\
{\dom(T^2(\id{D_0}))} \ar[r,"\alpha_t"'] \ar[d,"T(\underline{t})"'] & D_1 \ar[r,"\alpha_{\id{D_0}}"'] \ar[d,"t"'] & D_0 \\
D_1 \ar[r,"\alpha_{\id{D_0}}"'] & D_0 &
\end{tikzcd}
\]
    Indeed the left side of the diagram above is the naturality square of $\alpha$ on the morphism $\underline{t\c T(\underline{t})}$ in $\C/{D_0}$ given by
    \begin{cd}[4]
    {\dom(T^2(\id{D_0}))} \& {D_1}\& {D_0} \\
	\& {D_0.}
	\arrow["t", from=1-2, to=1-3]
    \arrow["T(\underline{t})", from=1-1, to=1-2]
	\arrow["t\c T(\underline{t})"'{pos=0.35}, from=1-1, to=2-2]
	\arrow[equals, from=1-3, to=2-2]
\end{cd}
The associativity of $\comp$ then follows from the associativity of the multiplication of the monad $T$ thanks to the straightforward equalities $(\id{D_1} \times \comp) \c j =T(\mu_{\id{D_0}})$ and $(\comp \times \id{D_1}) \c j$, where $j$ is the unique isomorphism between $D_1\times_{D_0} D_1 \times_{D_0} D_1$ and $\dom(T^3(\id{D_0}))$. We have thus proved that we have constructed an internal category. 

Consider now a morphism $(h,\lambda)\:(D_0,T,\alpha)\to (E_0,T',\beta)$ in $\GAMnd{\C}$, where $T=(\C/{D_0},T,\eta,\mu)$ and $T'=(\C/{E_0},T',\eta',\mu')$, exhibited by $h\:D_0\to E_0$ in $\C$,
\begin{cd}[5][5]
    {\C/{D_0}} \& {\C/{E_0}} \\
	{\C/{D_0}} \& {\C/{E_0}}
	\arrow["{h\c -}", from=1-1, to=1-2]
	\arrow[""{name=0, anchor=center, inner sep=0}, "T"', from=1-1, to=2-1]
	\arrow[""{name=1, anchor=center, inner sep=0}, "{T'}", from=1-2, to=2-2]
	\arrow["{h\c -}"', from=2-1, to=2-2]
	\arrow["\lambda"', between={0.2}{0.8}, Rightarrow, from=0, to=1]
\end{cd}
and a triangle
  \begin{cd}[2]
    {T} \\
	\& {\Id{\C}} \\
	{T'}
	\arrow["{(\dom,\alpha)}", from=1-1, to=2-2]
	\arrow["{(h\c -, \lambda)}"', from=1-1, to=3-1]
	\arrow["{(\dom,\beta)}"', from=3-1, to=2-2]
\end{cd}
of comorphisms of monads. We construct as follows an internal functor in $\C$ between the internal categories associated to $(D_0,T,\alpha)$ and $(E_0,T',\beta)$. We take $h\:D_0\to E_0$ as the morphism between the objects of objects. We then construct the morphism $\widehat{h}\:\dom(T(\id{D_0}))\to \dom(T'(\id{E_0}))$ between the objects of morphisms as the composite
\begin{cd}[4][4.5]
    {\dom(T(\id{D_0}))} \& {\dom(T'(h))} \& {\dom(T'(\id{E_0}))} \\
	{D_0} \\[-3ex]
	\& {E_0}
	\arrow["{\lambda_{\id{D_0}}}", from=1-1, to=1-2]
	\arrow["{t=T(\id{D_0})}"', from=1-1, to=2-1]
	\arrow["{T'(\underline{h})}", from=1-2, to=1-3]
	\arrow["{T'(h)}"', from=1-2, to=3-2]
	\arrow["{T'(\id{E_0})=t'}", from=1-3, to=3-2]
	\arrow["h"', from=2-1, to=3-2]
\end{cd}
We prove that this yields an internal functor
\begin{cd}[6.5][7]
{\dom(T^2(\id{D_0}))} \arrow[r,"{\mu_{\id{D_0}}}"] \arrow[d,"{\widehat{h}\times \widehat{h}}"'] \&[-2ex]
{\dom(T(\id{D_0}))} \arrow[r,shift left=1.8ex,"{\alpha_{\id{D_0}}}"] \arrow[r,shift right=2ex,"{T(\id{D_0})}"'] \arrow[d,"{\widehat{h}}"'] \&[1ex]
{D_0} \arrow[l,"{\eta_{\id{D_0}}}"{description}] \arrow[d,"{h}"] \\
{\dom(T'^2(\id{E_0}))} \arrow[r,"{\mu'_{\id{E_0}}}"'] \&
{\dom(T'(\id{E_0}))} \arrow[r,shift left=1.8ex,"{\beta_{\id{E_0}}}"] \arrow[r,shift right=2ex,"{T'(\id{E_0})}"'] \&
{E_0} \arrow[l,"{\eta'_{\id{E_0}}}"{description}]
\end{cd}
The square with $h,\widehat{h}$ and the target maps commutes by construction of $\widehat{h}$. The square with $h,\widehat{h}$ and the source maps is commutative because it can be seen as the following pasting 
\begin{cd}
    {\dom(T(\id{D_0}))} \& {\dom(T'(h))} \& {\dom(T'(\id{E_0}))} \\
	{D_0} \&\& {E_0}
	\arrow["{\lambda_{\id{D_0}}}", from=1-1, to=1-2]
	\arrow["{\alpha_{\id{D_0}}}"', from=1-1, to=2-1]
	\arrow["{T'(\underline{h})}", from=1-2, to=1-3]
	\arrow["{\beta_h}"{description}, from=1-2, to=2-1]
	\arrow["{\beta_{\id{E_0}}}", from=1-3, to=2-3]
	\arrow["h"', from=2-1, to=2-3]
\end{cd}
where the triangle on the left is commutative thanks to the compatibility of $\lambda$ with $\alpha$ and $\beta$ and the square on the right is a naturality square for $\beta$. Moreover, the square with $h,\widehat{h}$ and the identity maps (given by $\eta$ and $\eta'$) is commutative because it can be seen as the following pasting 
\begin{cd}
{D_0} \&\& {E_0} \\
	{\dom(T(\id{D_0}))} \& {\dom(T'(h))} \& {\dom(T'(\id{E_0}))}
	\arrow["h", from=1-1, to=1-3]
	\arrow["{\eta_{\id{D_0}}}"', from=1-1, to=2-1]
	\arrow["{\eta'_h}"{description}, from=1-1, to=2-2]
	\arrow["{\eta'_{\id{D_0}}}", from=1-3, to=2-3]
	\arrow["{\lambda_{\id{D_0}}}"', from=2-1, to=2-2]
	\arrow["{T'(\underline{h})}"', from=2-2, to=2-3]
\end{cd}
where the triangle on the left commutes because $(h,\lambda)$ is a comorphism of monads and the square on the right is the naturality square for $\eta'$ on the morphism $\overline{h}$. It remains to prove that $(h,\widehat{h})$ preserves composition, that is the commutativity of the square on the left. Using the naturality of $\mu'$ on the morphism $\underline{h}$ and the fact that $(h,\lambda)$ is a comorphism of monads, proving this commutativity reduces to proving the commutativity of the following diagram
\begin{eqD}{diamond}
\begin{cd}*
    {\dom(T^2(\id{D_0}))} \& {\dom(T'^2(\id{E_0}))} \\
	{\dom(T'(h\circ t))= \dom(T'(t))}
	\arrow["{\widehat{h} \times \widehat{h}}", from=1-1, to=1-2]
	\arrow["{\mu_{\id{D_0}}}"', from=1-1, to=2-1]
	\arrow["{T'(\widehat{h})}"', from=2-1, to=1-2]
\end{cd}
\end{eqD}
Since $(E_0,T', \beta)$ is a generalized action monad the square   \sq{\dom(T'^2(\id{E_0}))}{E_1}{E_1}{E_0}{\beta_{t'}}{\dom(T'(\underline{t'}))}{\dom(\underline{t'})=t'}{\beta_{\id{E_0}}=s'}
is a pullback and so we can check the commutativity of (\ref{diamond}) after post-composition with $\beta_{t'}$ and $T'(\underline{t'})$. The post-composition with $\beta_{t'}$ works thanks to the fact that $(h,\lambda)$ is a morphism of generalized action monads. And the post-composition with $T'(\underline{t'})$ works thanks to the naturality of $\lambda$ and to the equality $\underline{t' \c \widehat{h}}= \underline{h \c t}$. So we have shown that $(h,\widehat{h})$ is an internal functor.

Furthermore, the construction we produced above is functorial, giving a functor $$\Psi\:\GAMnd{\C}\to \Cat(\C).$$ Indeed, it is straightforward to prove that identities and composition are preserved using how identities and compositions are defined in $\GAMnd{\C}$ as well as the naturality of the involved natural transformations.

It is then straightforward to prove that the composite $\Psi\c \T$ is the identity functor (when choosing identities as representatives for pullbacks of identities). We now look at the composite $\T \c \Psi$. Given a generalized action monad $(D_0,T,\alpha)\in \GAMnd{\C}$, with $\T=(\C/{D_0},T,\eta,\mu)$, its image along the composite $\Psi\c \T$ is the generalized action monad $(D_0,\T_{\Psi(D_0,T,\alpha)},\alpha')$ where $\T_{\Psi(D_0,T,\alpha)}$ sends $C \aar{\gamma} D_0$ to the composite $P^{\gamma} \aar{\pi_1} \dom{T(\id{D_0})}\aar{T(\id{D_0})} D_0$ where $P^\gamma$ and $\pi_1$ are given by the pullback 
\begin{cd}
    {P^{\gamma}} \& C \\
	{\dom(T(\id{D_0}))} \& {D_0}
	\arrow["{\pi_2}", from=1-1, to=1-2]
	\arrow["{\pi_1}"', from=1-1, to=2-1]
	\arrow["\lrcorner"{anchor=center, pos=0.125}, draw=none, from=1-1, to=2-2]
	\arrow["\gamma", from=1-2, to=2-2]
	\arrow["{\alpha_{\id{D_0}}}"', from=2-1, to=2-2]
    \end{cd}
    and the component of $\alpha'$ on $\gamma$ is $\pi_2$. Since $\alpha$ is cartesian, given any $C \aar{\gamma} D_0$ in $\C/D_0$ the following square is a pullback 
    \begin{cd}
        {\dom(T(\gamma))} \& C \\
	{\dom(T(\id{D_0}))} \& {D_0}
	\arrow["{\alpha_{\gamma}}", from=1-1, to=1-2]
	\arrow["{\dom(T(\underline{\gamma}))}"', from=1-1, to=2-1]
	\arrow["\lrcorner"{anchor=center, pos=0.125}, draw=none, from=1-1, to=2-2]
	\arrow["\gamma", from=1-2, to=2-2]
	\arrow["{\alpha_{\id{D_0}}}"', from=2-1, to=2-2]
    \end{cd}
    and thus there exists a unique isomorphism 
    $$\nu_{\gamma}^{(D_0, T, \alpha)}\: \dom{T(\gamma)} \aiso P^{\gamma}
    $$
    such that the following diagram is commutative 
    \begin{cd}
        {\dom(T(\gamma))} \&[-3ex] \&[-3ex] \\[-2ex]
	\& {P^{\gamma}} \& C \\[-3ex]
	\& {\dom(T^2(\id{D_0}))} \\[-3ex]
	\& {D_0}
	\arrow["{\nu_{\gamma}^{(D_0, T, \alpha)}}"',aiso, from=1-1, to=2-2]
	\arrow["{\alpha_{\gamma}}", bend left=20, from=1-1, to=2-3]
	\arrow["{\dom(T(\underline{\gamma}))}"{description},bend right=15, from=1-1, to=3-2]
	\arrow["{T(\gamma)}"',bend right=30, from=1-1, to=4-2]
	\arrow["{\pi_2}"{description}, from=2-2, to=2-3]
	\arrow["{\pi_1}", from=2-2, to=3-2]
	\arrow["{T(\id{D_0})}", from=3-2, to=4-2]
    \end{cd}
It is straightforward to check that such isomorphisms $\nu_{\gamma}^{(D_0, T, \alpha)}$ are the components of a natural isomorphism $\nu^{(D_0, T, \alpha)}\:T \Rightarrow \T_{\Psi(D_0,T,\alpha)}$ and that such natural isomorphism extends to an isomorphism of generalized action monads $(\id{D_0}, \nu)$.  
It remains to prove that the family of natural isomorphisms $\nu^{(D_0, T, \alpha)}$ is natural in $(D_0, T, \alpha)$. To prove this, it suffices to show that given a morphism of generalized action monads 
$$(h, \lambda)\: (D_0,T, \alpha) \to (E_0,T', \beta)$$
the following equality of 2-cells holds
\begin{eqD*}
    \begin{cd}*
        {\C/D_0} \& {\C /D_0} \\
	{\C/D_0} \& {\C/D_0} \\
	{\C/E_0} \& {\C/E_0}
	\arrow["T", from=1-1, to=1-2]
	\arrow[""{name=0, anchor=center, inner sep=0}, "{\id{D_0} \circ -}"', equals, from=1-1, to=2-1]
	\arrow[""{name=1, anchor=center, inner sep=0}, "{\id{D_0} \circ -}", equals, from=1-2, to=2-2]
	\arrow["{\T_{\Psi(D_0,T,\alpha)}}", from=2-1, to=2-2]
	\arrow["{h \circ -}"', from=2-1, to=3-1]
	\arrow["{\T_{\Psi(h,\lambda)}}", Rightarrow, from=2-2, to=3-1, shorten <= 3ex, shorten >= 3ex]
	\arrow["{h \circ -}", from=2-2, to=3-2]
	\arrow["{\T_{\Psi(E_0,T',\beta)}}"', from=3-1, to=3-2]
	\arrow["{\nu^{(D_0,T,\alpha)}}"{inner sep=0.2ex}, Rightarrow, from=1-2, to=2-1,shorten <= 3ex, shorten >= 3ex]
    \end{cd}
    \quad = \quad 
    \begin{cd}*
       {\C/D_0} \& {\C /D_0} \\
	{\C/E_0} \& {\C/E_0} \\
	{\C/E_0} \& {\C/E_0}
	\arrow["T", from=1-1, to=1-2]
	\arrow["{h \circ -}"', from=1-1, to=2-1]
	\arrow["\lambda", Rightarrow, from=1-2, to=2-1, shorten <= 3ex, shorten >= 3ex]
	\arrow["{\id{D_0} \circ -}", equals, from=1-2, to=2-2]
	\arrow["{T'}"', from=2-1, to=2-2]
	\arrow["{\id{E_0} \circ -}"', equals, from=2-1, to=3-1]
	\arrow["{\nu^{(E_0,T',\beta)}}"{inner sep=-0.2ex}, Rightarrow, from=2-2, to=3-1, shorten <= 3ex, shorten >= 3ex]
	\arrow["{\id{E_0} \circ -}", equals, from=2-2, to=3-2]
	\arrow["{\T_{\Psi(E_0,T',\beta)}}"', from=3-1, to=3-2]
    \end{cd}
\end{eqD*}
And it is easy to check that such equality holds on components using the naturality of $\lambda$ and the universal property of the pullbacks involved.

This concludes the proof that $\Psi$ is the pseudoinverse of $\T$.
\end{proof}

\begin{remark}
    The request that $\alpha$ is a cartesian natural transformation, in the definition of the objects of $\GAMnd{\C}$, ensures that we can build an internal category from any generalized action monad $(D_0,\TT,\alpha)$. More precisely, in the notations introduced in the proof of \thex\ref{theormainequivalence}, it is essential to ensure that $D_2=D_1\x[D_0]D_1$ is given by $\dom(T^2(\id{D_0}))$, exactly as $D_1$ is given by $\dom(T(\id{D_0}))$. It looks like a generalization of \thex\ref{theormainequivalence} could be obtained, where generalized action monads $(D_0,\TT,\alpha)$ without the requirement that $\alpha$ is cartesian correspond to a generalization of internal categories on the line of paracategories. Indeed, $\dom(T^n(\id{D_0}))$ could give the domains of the partial composition maps $D_n\rightharpoonup D_1$. As it was not needed for the applications we had in mind, we have not investigated the details. 
\end{remark}

\section{The general fibrational setting}
\label{sec:general-fibrational-setting}
We now explore a generalization of the results of the previous sections to the general fibrational setting. We consider an arbitrary bifibration $p\:\E\to \C$ satisfying the Beck--Chevalley condition at every pullback square, rather than necessarily the codomain bifibration $\cod\:\C^2\to \C$. A reference for bifibrations is \cite{JacobsCLTT}. In recent years, similar endeavours have been proved to be very fruitful. This is especially the case in the setting of descent theory, which is where we aim to apply our results. We prove that every internal category in $\C$ still yields an associated monad, with respect to the bifibration $p\:\E\to \C$, generalizing the action monad we constructed in Section \ref{secmonadicity}. However, trying to rebuild an internal category from a monad proves harder in the general fibrational setting. We conjecture that a fibrational generalization of the notion of internal category would be more suitable for this purpose.

For simplicity, we assume the bifibration $p\:\E\to \C$ to be normal (meaning that both the fibration structure and the opfibration structure are normal). 

\begin{remark}
    In the following, we identify and capture the key elements of the construction of the generalized action monad $\T_{\DD}$ associated to an internal category $\DD$ from a fibrational point of view. The slice categories $\C/X$ with $X\in \C$ can be captured as the fibres of the codomain fibration $\cod\:\C^2\to \C$ at $X$. The pullbacks along the source map $s\:D_1\to D_0$ can be captured as the change of base along $s$ with respect to the codomain fibration. Indeed, the cartesian liftings with respect to $\cod$ correspond precisely to pullbacks. The postcomposition with maps like the target map $t$ can be captured as the application of the left adjoint to the change of base functors. Indeed, the codomain fibration is a bifibration, and the direct image functors $\C/X\to \C/Y$ (left adjoint to the pullback functors $f\st\:\C/Y\to \C/X$ with $f\:X\to Y$) are precisely postcomposition functors.

    As already pointed out at the end of Section \ref{secprelim}, the useful pullbacks lemma presented in \lemx\ref{lem:assoc-pb} is a consequence of the fact that the codomain bifibration satisfies the Beck--Chevalley condition at every pullback square (recalled below).
\end{remark}

\begin{notation}
    We fix $p\:\E\to \C$ an arbitrary bifibration. We denote the fibre of $p$ at $X\in \C$ as $p^{-1}(X)$. Given $f\:X\to Y$ in $\C$, we denote the change of base functor along $f$ with respect to $p$ as $f\st\:p^{-1}(Y)\to p^{-1}(X)$. We then denote the direct image functor along $f$ with respect to $p$ as $f_!\:p^{-1}(X)\to p^{-1}(Y)$.
\end{notation}

\begin{rem}
    Recall that a bifibration $p\:\E\to\C$ corresponds via the Grothendieck construction to a pseudofunctor $\C\op\to \Catadj$ where $\Catadj$ is the 2-category of categories, right adjoint functors and natural transformations. Explicitly, $p$ corresponds to the pseudofunctor
    \begin{fun}
 	[p] & \: & \C\op & \too & \Catadj\\[1ex]
    && X & \mto & p^{-1}(X) \\ 
    && \begin{cd}*[3]
         Y \\
         X \ar[u,"f"]
     \end{cd} & \mto & \begin{cd}*[3]
         p^{-1}(Y) \ar[d,bend left=35,"{f\st}",""{name=B}]\\
         p^{-1}(X)\ar[u,bend left=35,dashed,"{f_!}",""{name=A}]
         \ar[from=A, to=B, adjunction]
        \end{cd}
\end{fun}
\end{rem}

We recall the definition of the Beck--Chevalley condition.

\begin{defi}[\cite{BenabouRoubaud}]
    Consider a pullback square
    \sq[p][6][6]{W}{X}{Y}{Z}{k}{h}{f}{g}
    in $\C$. Then taking the change of base functors we obtain a natural isomorphism 
    $$k\st \c f\st \iso (f\c k)\st = (g\c h)\st\iso h\st \c g\st$$
    that we call $\xi$. But notice that $f\st$ and $h\st$ have left adjoints $f_!$ and $h_!$ respectively. So $\xi$ has a mate
    $$\chi\:h_!\c k\st \aR{} g\st \c f_!$$
    More explicitly, $\chi$ is given by the pasting
    \begin{cd}[4]
        {p^{-1}(X)} \&[-5ex]\&\&[-5ex] \\[-3ex]
	\& {p^{-1}(Z)} \& {p^{-1}(X)} \\
	\& {p^{-1}(Y)} \& {p^{-1}(W)} \\[-3ex]
	\&\&\& {p^{-1}(Y)}
	\arrow["{f_!}"', from=1-1, to=2-2]
	\arrow[equal,bend left=15, from=1-1, to=2-3]
	\arrow["{f\st}", from=2-2, to=2-3]
	\arrow[""{name=0, anchor=center, inner sep=0}, "{g\st}"', from=2-2, to=3-2]
	\arrow[shift right=5, Rightarrow, from=2-3, to=2-2,"{\eta^f}"',xshift=-7ex,shorten <=2ex, shorten >=2ex]
	\arrow[""{name=1, anchor=center, inner sep=0}, "{k\st}", from=2-3, to=3-3]
	\arrow["{h\st}"', from=3-2, to=3-3]
	\arrow[equal,bend right=15, from=3-2, to=4-4]
	\arrow[shift left=5, Rightarrow, from=3-3, to=3-2,xshift=7ex,shorten <=2ex, shorten >=2ex,"{\epsilon^h}"]
	\arrow["{h_!}", from=3-3, to=4-4]
	\arrow[iso, from=0, to=1]
    \end{cd}
    where $\eta^f$ is the unit of the adjunction $f_!\dashv f\st$ and $\epsilon^h$ is the counit of the adjunction $h_!\dashv h\st$.
    
    $p$ is said to satisfy the \dfn{Beck--Chevalley condition at every pullback square} if for every starting pullback square in $\C$ the induced natural transformation $\chi$ is an isomorphism.
\end{defi}

\begin{remark}
    The Beck--Chevalley condition says that, over a pullback square, pulling back and then pushing forward along one side gives the same result as pushing forward and then pulling back along the opposite side. This is the base-change compatibility needed in the fibrational construction of generalized action monads.
\end{remark}

\begin{assumption}
    We assume that the fixed bifibration $p\:\E\to \C$ satisfies the Beck--Chevalley condition at every pullback square.
\end{assumption}

\begin{cons}\label{consTDfibrational}
    Let $\DD=($\begin{cd}*
    D_1 \arrow[r,shift left=1.7,"s"]\arrow[r,shift right=1.7,"t"']\& D_0 \arrow[l,"i"{description}]
\end{cd}$)$ be an internal category in $\C$. We generalize the construction of the generalized action monad $\T_{\DD}$ associated to $\DD$ (\conx\ref{consTD}), using $p\:\E\to \C$ in place of the codomain fibration. We upgrade the slices $\C/X$ with $X\in \C$ to the fibres $p^{-1}(X)$ of $p$ at $X$. We then upgrade the pullback along the source map $s\:D_1\to D_0$ to the change of base $s\st\:p^{-1}(D_0)\to p^{-1}(D_1)$ along $s$ with respect to $p$. And we upgrade the postcomposition with $t\:D_1\to D_0$ to the direct image functor $t_!\:p^{-1}(D_1)\to p^{-1}(D_0)$.

So the underlying functor $\T^p_{\DD}$ of the monad associated to $\DD$ with respect to $p$ becomes the composite of functors
$$p^{-1}(D_0)\aar{s\st}p^{-1}(D_1) \aar{t_!}p^{-1}(D_0)$$
Notice that $\T^p_{\DD}$, which sends 
$$\gamma\in p^{-1}(D_0) \mtoo t_! (s\st(\gamma))$$
really is a generalization of the generalized action monad $\T_{\DD}$ associated to $\DD$ that we built in \conx\ref{consTD}, as it is straightforward to check. In particular, the morphism $\id{}\x f$ involved in the assignment of $\T_{\DD}$ on morphisms coincides with $s\st (f)$ with respect to the codomain bifibration. The following diagram helps visualizing the action of $\T^p_{\DD}$ on $\gamma\in p^{-1}(D_0)$, in terms of cartesian and cocartesian liftings with respect to $p$:
\begin{cd}[2.3][3.7]
    {s\st(\gamma)} \&\&[-3ex] \gamma \\
	\& {t_!(s\st(\gamma))} \\
	{D_1} \&\& {D_0} \\
	\& {D_0}
	\arrow["{\overline{s}^{\gamma}}", from=1-1, to=1-3]
	\arrow["{^{s\st\gamma}\overline{t}}"', from=1-1, to=2-2]
	\arrow["p"', maps to, from=1-1, to=3-1]
	\arrow["p", maps to, from=1-3, to=3-3]
	\arrow["p"{pos=0.35}, maps to, from=2-2, to=4-2]
	\arrow["s"{pos=0.4}, from=3-1, to=3-3]
	\arrow["t"', from=3-1, to=4-2]
\end{cd}
Since we have built $\T^p_{\DD}$ as the composite of two functors, it is a functor as well.
\end{cons}

\begin{theo}
Let $\DD=($\begin{cd}*
    D_1 \arrow[r,shift left=1.7,"s"]\arrow[r,shift right=1.7,"t"']\& D_0 \arrow[l,"i"{description}]
\end{cd}$)$ be an internal category in $\C$. The functor 
\begin{fun}
 	\T^p_{\DD} & \: & p^{-1}(D_0) & \too & p^{-1}(D_0)\\[1ex]
     && \gamma & \mto & t_!(s\st(\gamma))
\end{fun}
described in \conx\ref{consTDfibrational} extends to a monad $\T^p_{\DD}=(\T^p_{\DD},\eta^{\DD},\mu^{\DD})$.
\end{theo}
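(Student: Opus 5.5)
We build the unit and the multiplication from the adjunctions $f_!\dashv f\st$, the pseudofunctoriality of $[p]$, and the Beck--Chevalley isomorphism. This mirrors the codomain case of \thex\ref{thm:action-monad-algebras}, with \lemx\ref{lem:assoc-pb} replaced by Beck--Chevalley.

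\textbf{Unit.} Since $s\c i=\id{D_0}=t\c i$ and $p$ is normal, we have $\Id{}= i\st s\st$ and $\Id{}= t_! i_!$ (up to the coherence isomorphisms of the pseudofunctor). We define $\eta^{\DD}$ as the composite
$$\Id{}\iso t_! i_! i\st s\st \aR{t_!\,\eta^i\, s\st}\ldots$$
More precisely, we take
$$\Id{}\iso t_! i_! \, i\st s\st \aR{t_!\,\epsilon^i\,s\st} t_! s\st,$$
where $\epsilon^i\:i_! i\st\Rightarrow \Id{}$ is the counit of $i_!\dashv i\st$ on $p^{-1}(D_1)$. In the codomain case this recovers $\langle i\gamma,\id{}\rangle$.

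\textbf{Multiplication.} Write $D_2=D_1\x[s,D_0,t]D_1$ with projections $\pi_1,\pi_2$, so that $s\pi_1=t\pi_2$. Beck--Chevalley at this pullback square gives an isomorphism $\chi\:\pi_{1!}\pi_2\st\iso s\st t_!$ (choosing the orientation with $t$ pushing and $s$ pulling). We then define
$$t_!s\st t_!s\st \iso t_!\,\pi_{1!}\pi_2\st\, s\st \iso (t\pi_1)_!(s\pi_2)\st = (t m)_!(s m)\st \iso t_! m_! m\st s\st \aR{t_!\,\epsilon^m\,s\st} t_!s\st.$$
Here we use $t\pi_1=tm$ and $s\pi_2=sm$, which are the source and target axioms for composition, with the convention that $\pi_1$ is the left factor (the $g$ in $(g,f)$). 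In the codomain case this composite is exactly $[\mu^{\DD}_\gamma]\c j$.

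\textbf{Monad axioms.} Each axiom reduces to a pasting identity in the pseudofunctor $[p]\:\C\op\to\Catadj$. For the unit laws, we paste the Beck--Chevalley square with the squares obtained from the factorizations $\langle it,\id{}\rangle$ and $\langle \id{},is\rangle\:D_1\to D_2$. The key facts are that mates of pseudofunctoriality isomorphisms compose to mates, and that $m\c\langle it,\id{}\rangle=\id{D_1}$ turns the composite $\epsilon^m$ after the unit into $\epsilon^{\id{}}=\id{}$ by naturality of counits under composition. For associativity, we consider $D_3=D_1\x[D_0]D_1\x[D_0]D_1$, which is obtained as an iterated pullback in two ways. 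Both sides of the associativity equation are then identified with the composite
$$(t_!s\st)^3\iso (t\pi_1)_!(s\pi_3)\st \iso t_! m_3{}_! m_3\st s\st \Rightarrow t_! s\st,$$
where $m_3=m(m\x\id{})=m(\id{}\x m)$ and the two routes are related by the compatibility of Beck--Chevalley isomorphisms with pasting of pullback squares (horizontal and vertical composition of mates).

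\textbf{Main obstacle.} The hard part is the coherence bookkeeping. We need that Beck--Chevalley mates compose along pasted pullbacks and interact correctly with the pseudofunctor coherence isomorphisms and the counits $\epsilon^m$. The first thing to try is to state one lemma: for any commutative diagram in $\C$ built from pullbacks, the canonical 2-cell $f_!g\st\Rightarrow h_!k\st$ assembled from counits and Beck--Chevalley isomorphisms depends only on the underlying span map. This is a standard consequence of the calculus of mates, by comparing both sides after transposing along the adjunctions. With this lemma in hand, every monad axiom reduces to an equality of span morphisms $D_1\to D_1$ or $D_3\to D_1$, and those equalities are exactly the unit and associativity axioms of $\DD$.
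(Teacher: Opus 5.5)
Your proposal is correct and follows essentially the same route as the paper. The paper also realizes $\eta^{\DD}$ as $\operatorname{Id}\cong t_!i_!i^\ast s^\ast\Rightarrow t_!s^\ast$ via the counit of $i_!\dashv i^\ast$, and $\mu^{\DD}$ as the Beck--Chevalley isomorphism followed by the counit of $\operatorname{comp}_!\dashv\operatorname{comp}^\ast$ (after first defining both through (co)cartesian liftings); it then proves the monad laws by pasting further Beck--Chevalley squares and identifying composite counits using the unit and associativity axioms of $\DD$, and your coherence lemma is a more systematic packaging of exactly those pasting arguments.
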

\begin{proof}
    We define the unit $\eta^{\DD}\:\Id{}\aR{} \T^p_{\DD}$ to have as general component on $\gamma\in p^{-1}(D_0)$ the composite 
    $$\eta^{\DD}_{\gamma}:= {^{s\st\gamma} \overline{t}}\c [\eta_{\gamma}]$$
    where $^{s\st\gamma} \overline{t}$ is the cocartesian lifting of $t$ to $s\st(\gamma)$ and $[\eta_{\gamma}]$ is induced by the universal property of the cartesian lifting of $s$ to $\gamma$ as in the following diagram:
    \begin{cd}[2][3.5]
        \gamma \&\&\&[-3ex] \\
	\& {s\st(\gamma)} \&\& \gamma \\
	\&\& {t_!(s\st(\gamma))} \\[-3ex]
	{D_0} \\
	\& {D_1} \&\& {D_0} \\
	\&\& {D_0}
	\arrow["{[\eta_\gamma]}"'{inner sep=0.3ex,pos=0.55}, dashed, from=1-1, to=2-2, shorten >=-1ex]
	\arrow[equal,bend left=10, from=1-1, to=2-4]
	\arrow["p"', maps to, from=1-1, to=4-1]
	\arrow["{\overline{s}^{\gamma}}", from=2-2, to=2-4]
	\arrow["{^{s\st\gamma}\overline{t}}"'{inner sep=0.3ex}, from=2-2, to=3-3]
	\arrow["p"'{pos=0.4}, maps to, from=2-2, to=5-2]
	\arrow["p", maps to, from=2-4, to=5-4]
	\arrow["p"', maps to, from=3-3, to=6-3]
	\arrow["i"', from=4-1, to=5-2]
	\arrow[equal,bend left=10, from=4-1, to=5-4]
	\arrow["s"'{pos=0.4}, from=5-2, to=5-4]
	\arrow["t"', from=5-2, to=6-3]
    \end{cd}

    It is straightforward to prove, using the universal property of (co)cartesian liftings, that $\eta^{\DD}$ coincides with the following pasting of natural transformations:
    \begin{cd}
        {p^{-1}(D_0)} \&[-1ex] \&[-1ex] {p^{-1}(D_0)} \&[-1ex] \&[-1ex] {p^{-1}(D_0)} \\
	\& {p^{-1}(D_1)} \&\& {p^{-1}(D_1)}
	\arrow[equals, from=1-1, to=1-3]
	\arrow[shift right=7, iso, from=1-1, to=1-3]
	\arrow["{s\st}"', from=1-1, to=2-2]
	\arrow[equals, from=1-3, to=1-5]
	\arrow[shift right=7, iso, from=1-3, to=1-5]
	\arrow["{i_{!}}", from=1-3, to=2-4]
	\arrow["{i\st}", from=2-2, to=1-3]
	\arrow[""{name=0, anchor=center, inner sep=0}, equals, from=2-2, to=2-4]
	\arrow["{t_{!}}"', from=2-4, to=1-5]
	\arrow[between={0.2}{0.8}, Rightarrow, from=1-3, to=0,"\operatorname{counit}"]
    \end{cd}
    where $\operatorname{counit}$ is the counit of the adjunction $i_!\dashv i\st$. Whence $\eta^{\DD}$ is clearly natural.

    We then define the multiplication $\mu^{\DD}\:\T^p_{\DD}\c \T^p_{\DD}\aR{}\T^p_{\DD}$ as follows. Notice that 
    $$\mu^{\DD}\:t_!\c s\st \c t_!\c s\st\aR{} t_!\c s\st$$
    Since $p$ satisfies the Beck--Chevalley condition at the pullback square
    \sq[p][6][6]{D_1\x[D_0] D_1}{D_1}{D_1}{D_0}{\pi_1}{\pi_2}{t}{s}
    we obtain a natural isomorphism $s\st\c t_!\iso (\pi_2)_!\c\pi_1\st$. Moreover, $t_!\c (\pi_2)_!\iso (t\c \pi_2)_!$. So, in order to define the multiplication $\mu^{\DD}$, it suffices to define a natural transformation
    $$\widehat{\mu^{\DD}}\:(t\c \pi_2)_!\c\pi_1\st\c s\st\aR{} t_!\c s\st$$
    Let $\gamma\in p^{-1}(D_0)$. Consider the morphism $[\mu_{\gamma}]\:\pi_1\st (s\st (\gamma))\to s\st (\gamma)$  induced by the universal property of the cartesian lifting of $s$ to $\gamma$ as in the following diagram:
    \begin{cd}[2.3][3.7]
        {\pi_1\st(s\st(\gamma))} \&\& {s\st(\gamma)} \& \\
	\& {s\st(\gamma)} \&\& \gamma \\
	{D_1\x[D_0] D_1} \&\& {D_1} \\
	\& {D_1} \&\& {D_0}
	\arrow["{\overline{\pi_1}^{s\st \gamma}}", from=1-1, to=1-3]
	\arrow["{[\mu_\gamma]}"'{pos=0.6}, dashed, from=1-1, to=2-2,shorten >=-1ex]
	\arrow["p"', maps to, from=1-1, to=3-1]
	\arrow["{\overline{s}^{\gamma}}", from=1-3, to=2-4]
	\arrow["p"'{pos=0.3}, maps to, from=1-3, to=3-3]
	\arrow["{\overline{s}^{\gamma}}"'{pos=0.35}, from=2-2, to=2-4]
	\arrow["p"'{pos=0.3}, maps to, from=2-2, to=4-2]
	\arrow["p", maps to, from=2-4, to=4-4]
	\arrow["{\pi_1}"'{pos=0.7}, from=3-1, to=3-3]
	\arrow["{\operatorname{comp}}"', from=3-1, to=4-2]
	\arrow["s", from=3-3, to=4-4]
	\arrow["s"', from=4-2, to=4-4]
    \end{cd}
    We induce the component $\widehat{\mu^{\DD}}_{\gamma}$ of $\widehat{\mu^{\DD}}$ on $\gamma$ by the universal property of the cocartesian lifting of $t\c \pi_2$ to $\pi_1\st(s\st(\gamma))$ as in the following diagram:
    \begin{cd}[2.3][3.7]
       \& {s\st (\gamma)} \&\& {t_{!} s\st (\gamma)} \\
	{\pi_1\st (s \st (\gamma))} \&\& {(t \circ \pi_2)_{!}\pi_1\st( s\st(\gamma))} \\
	\& {D_1} \&\& {D_0} \\
	{D_1\times_{D_0}D_1} \&\& {D_0}
	\arrow["{\overline{t}}", from=1-2, to=1-4]
	\arrow["p"'{pos=0.4}, maps to, from=1-2, to=3-2]
	\arrow["p", maps to, from=1-4, to=3-4]
	\arrow["{[\mu_{\gamma}]}", from=2-1, to=1-2]
	\arrow["{\overline{t \circ \pi_2}}"'{pos=0.7}, from=2-1, to=2-3]
	\arrow["p", maps to, from=2-1, to=4-1]
	\arrow["{\exists !}"{description}, dashed, from=2-3, to=1-4]
	\arrow["p"{pos=0.4}, maps to, from=2-3, to=4-3]
	\arrow["t"{pos=0.3}, from=3-2, to=3-4]
	\arrow["{\operatorname{comp}}", from=4-1, to=3-2]
	\arrow["{t\circ \pi_2}"', from=4-1, to=4-3]
	\arrow[equals, from=4-3, to=3-4]
    \end{cd}

    It is straightforward to prove, using the universal property of (co)cartesian liftings, that $\mu^{\DD}$ coincides with the following pasting of natural transformations:
    \begin{cd}[5][5]
        {p^{-1}(D_0)} \& {p^{-1}(D_1)} \& {p^{-1}(D_0)} \& {p^{-1}(D_1)} \& {p^{-1}(D_0)} \\
	\& {p^{-1}(D_1)} \& {p^{-1}(D_1\times_{D_0}D_1)} \& {p^{-1}(D_1)}
	\arrow["{s\st}", from=1-1, to=1-2]
	\arrow[""{name=0, anchor=center, inner sep=0}, "{s\st}"', from=1-1, to=2-2]
	\arrow["{t_{!}}", from=1-2, to=1-3]
	\arrow[""{name=1, anchor=center, inner sep=0}, "{\pi_1\st}"', from=1-2, to=2-3]
	\arrow["{s\st}", from=1-3, to=1-4]
	\arrow["{t_{!}}", from=1-4, to=1-5]
	\arrow["{\operatorname{comp}\st}"', from=2-2, to=2-3]
	\arrow[""{name=2, anchor=center, inner sep=0}, shift right=5, bend right=30, equals, from=2-2, to=2-4]
	\arrow[""{name=3, anchor=center, inner sep=0}, "{{\pi_2}_{!}}"', from=2-3, to=1-4]
	\arrow["{\operatorname{comp}_{!}}"', from=2-3, to=2-4]
	\arrow[""{name=4, anchor=center, inner sep=0}, "{t_{!}}"', from=2-4, to=1-5]
	\arrow[between={0.2}{0.8}, iso, from=0, to=1]
	\arrow["{\operatorname{(BC)}}"'{inner sep=0.8ex}, between={0.2}{0.8}, iso, from=1, to=3, shift left=1ex]
	\arrow[between={0.2}{0.8}, iso, from=3, to=4]
	\arrow["{\operatorname{counit}}", between={0.3}{0.7}, Rightarrow, from=2-3, to=2]
    \end{cd}
    where the isomorphism labelled $\operatorname{(BC)}$ is given by the Beck--Chevalley condition as described above, and $\operatorname{counit}$ is the counit of the adjunction $\operatorname{comp}_!\dashv \operatorname{comp}\st$. Whence $\mu^{\DD}$ is clearly natural.

    We now prove that $(\T_{\DD}^p,\eta^{\DD}, \mu^{\DD})$ is a monad on $p^{-1}(D_0)$. In order to prove that $\mu^{\DD}\c\T_{\DD}^p\eta^{\DD}=\id{}$, we apply the Beck--Chevalley condition to the pullback square
    \sq[p][6][6]{D_1}{D_1\x[D_0]D_1}{D_0}{D_1}{\langle i\c s,\id{}\rangle}{s}{\pi_1}{i}
    This allows us to rewrite 
        \begin{cd}[2][2]
            {p^{-1}(D_1)} \& {p^{-1}(D_0)} \& \& \\
	\&\& {p^{-1}(D_1)} \arrow["{\pi_1\st}", r] \& {p^{-1}(D_1\times_{D_0}D_1)} 
	 \arrow["{i\st}", from=1-1, to=1-2]
	\arrow[""{name=0, anchor=center, inner sep=0}, bend right=30, equals, from=1-1, to=2-3]
	\arrow["{i_{!}}", from=1-2, to=2-3]
	\arrow["{\operatorname{counit}}"{pos=0.3}, between={0}{0.8}, Rightarrow, from=1-2, to=0]
        \end{cd}
    as the following pasting 
        \begin{cd}[3][3]
        {p^{-1}(D_1)} \& {p^{-1}(D_0)} \& {p^{-1}(D_1)} \& \\
	\& {p^{-1}(D_1\times_{D_0}D_1)} \& {p^{-1}(D_1)} \\
	\&\&\& {p^{-1}(D_1\times_{D_0}D_1)}
	\arrow["{i\st}", from=1-1, to=1-2]
	\arrow[""{name=0, anchor=center, inner sep=0}, "{\pi_1\st}"', from=1-1, to=2-2]
	\arrow["{i_{!}}", from=1-2, to=1-3]
	\arrow[""{name=1, anchor=center, inner sep=0}, "{s\st}", from=1-2, to=2-3]
	\arrow[""{name=2, anchor=center, inner sep=0}, "{\pi_1\st}", bend left=25, from=1-3, to=3-4]
	\arrow["{\langle i\circ s, \id{}\rangle \st}"'{inner sep=1ex}, from=2-2, to=2-3]
	\arrow[""{name=3, anchor=center, inner sep=0}, bend right=30, equals, from=2-2, to=3-4]
	\arrow["{\langle i\circ s, \id{}\rangle_{!}}"{inner sep=0 ex,pos=0.4}, from=2-3, to=3-4]
	\arrow[between={0.2}{0.8}, iso, from=0, to=1]
	\arrow["{\operatorname{(BC')}}"'{pos=0.8, inner sep=0 ex},shift left=1ex, between={0.2}{0.8}, iso, from=1, to=2]
	\arrow["{\operatorname{counit}}"{pos=0.3}, between={0}{0.8}, Rightarrow, from=2-3, to=3]
        \end{cd}

    It is then straightforward to conclude that the unit law $\mu^{\DD}\c\T_{\DD}^p\eta^{\DD}=\id{}$ holds, by using that the pasting of counits
    \begin{cd}[2.7][2.7]
        {p^{-1}(D_1)} \& {p^{-1}(D_1\times_{D_0}D_1)} \& {p^{-1}(D_1)} \& \\
	\&\&\& {p^{-1}(D_1\times_{D_0}D_1)} \\
	\&\&\& {p^{-1}(D_1)}
	\arrow["{\operatorname{comp}\st}", from=1-1, to=1-2]
	\arrow[""{name=0, anchor=center, inner sep=0}, bend right=30, equals, from=1-1, to=3-4]
	\arrow["{\langle i\circ s, \id{}\rangle \st}", from=1-2, to=1-3]
	\arrow[""{name=1, anchor=center, inner sep=0}, bend right=30, equals, from=1-2, to=2-4]
	\arrow["{\langle i\circ s, \id{}\rangle_{!}}", from=1-3, to=2-4]
	\arrow["{\operatorname{comp}_{!}}", from=2-4, to=3-4]
	\arrow["{\operatorname{counit}}"{pos=0.4}, between={0.2}{0.8}, Rightarrow, from=1, to=0]
	\arrow["{\operatorname{counit}}"{pos=0.3}, between={0}{0.8}, Rightarrow, from=1-3, to=1]
    \end{cd}
    coincides (up to isomorphisms that get appropriately cancelled by the rest of the pasting diagram for $\mu^{\DD}\c\T_{\DD}^p\eta^{\DD}$) with the counit of the composite adjunction
    $$\operatorname{comp}_!\c \langle i\c s,\id{}\rangle_! \dashv \langle i\c s,\id{}\rangle\st \c \operatorname{comp}\st$$
    which is the adjunction of the identity with itself. The other unit law is shown by dual arguments. Notice that, when building the Beck--Chevalley transformation, it is equivalent to either start from the square made by all inverse images or the square made by all direct images, to then paste with relevant units and counits.
    
    It remains to prove that the associativity condition holds for $(\T_{\DD}^p,\eta^{\DD}, \mu^{\DD})$. For this, we apply the Beck--Chevalley condition to the pullback square
    \sq[p][6][6]{(D_1\x[D_0]D_1)\x[D_0]D_1}{D_1\x[D_0]D_1}{D_1\x[D_0]D_1}{D_1}{\operatorname{comp}\x \id{}}{\pi_1}{\pi_2}{\operatorname{comp}}
    and to the dual one that involves $\id{}\x \operatorname{comp}$. It is then straightforward to conclude by similar arguments to the ones shown above, using that the composite adjunction
    $$\operatorname{comp}_!\c (\operatorname{comp}\x \id{})_!\dashv (\operatorname{comp}\x \id{})\st \c \operatorname{comp}\st$$
    has the same counit (up to appropriately cancelled isomorphisms) of the composite adjunction
    $$\operatorname{comp}_!\c (\id\x\operatorname{comp})_!\dashv (\id{}\x\operatorname{comp})\st \c \operatorname{comp}\st$$
    thanks to the associativity axiom of the internal category $\DD$.
    \end{proof}

\begin{remark}\label{remactionswrtp}
    It is straightforward to see that the monads $\T_{\DD}^p$ generalize the monads $\T_{\DD}$ that we built in \thex\ref{thm:action-monad-algebras}. More precisely, $\T_{\DD}^{\operatorname{cod}}$ coincides with $\T_{\DD}$. We can then interpret the monads $\T_{\DD}^p$ as fibrational generalized action monads. 
    
    Since the Eilenberg--Moore algebras of the generalized action monad $\T_{\DD}$ associated to an internal category $\DD$ precisely capture the actions of $\DD$, we can interpret the algebras of the monads $\T_{\DD}^p$ as a fibrational generalization of actions of internal categories.
\end{remark}

\begin{remark}\label{consactionswrtp}
    One could explicitly describe the actions of an internal category $\DD$ with respect to a bifibration $p$, corresponding with the algebras of the monad $\T_{\DD}^p$. They are given by pairs $(\gamma,\xi)$ where $\gamma\in p^{-1}(D_0)$ and $\xi\:t_!(s\st(\gamma))\to \gamma$ is a vertical morphism over $D_0$ such that the algebra axioms are satisfied.
    
    Morphisms between these generalized actions of the internal category $\DD$ can then also be explicitly described as vertical morphisms $f\:\gamma\to \gamma'$ over $D_0$ satisfying the relevant axiom of morphism of algebras.
\end{remark}
   
\begin{theo}\label{theorTisafunctorfib}
    Assume that the opfibration structure of $p$ is split. Then the assignment
    {$$\DD=(\text{\begin{cd}*
    D_1 \arrow[r,shift left=1.7,"s"]\arrow[r,shift right=1.7,"t"']\& D_0 \arrow[l,"i"{description}]
    \end{cd}})\in \Cat(\C) \quad \h[3] \mapsto \h[3] \quad \T^p_{\DD}$$}
    extends to a functor
    $$\T^p\: \Cat(\C) \to \Mndcoadj$$
\end{theo}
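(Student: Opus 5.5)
I will follow the pattern of the proof of \thex\ref{theorTisafunctor}, replacing postcomposition by the direct image functors and pullback along $F_1$ by the universal property of cocartesian liftings. Given an internal functor $F=(F_0,F_1)\:\DD\to\EE$, the underlying functor of $\T^p_F$ will be $(F_0)_!\:p^{-1}(D_0)\to p^{-1}(E_0)$. It is a left adjoint, with right adjoint $F_0\st$, so the result will land in $\Mndcoadj$. It then remains to build a natural transformation
$$\tau_F\:(F_0)_!\c t_!\c s\st\aR{} t'_!\c s'^{\ast}\c (F_0)_!,$$
check the dual monad axioms, and check functoriality.

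Since the opfibration is split, $(F_0)_!\c t_!=(F_0\c t)_!=(t'\c F_1)_!=t'_!\c (F_1)_!$ holds on the nose. So it suffices to define a transformation
$$\sigma\:(F_1)_!\c s\st\aR{} s'^{\ast}\c (F_0)_!,$$
and to set $\tau_F=t'_!\sigma$. I take $\sigma$ to be the mate of the canonical isomorphism $s\st\c F_0\st\iso F_1\st\c s'^{\ast}$, which comes from $F_0 s=s'F_1$. Explicitly, $\sigma$ is the composite
$$(F_1)_!s\st\aR{}(F_1)_!s\st F_0\st (F_0)_!\iso (F_1)_!F_1\st s'^{\ast}(F_0)_!\aR{} s'^{\ast}(F_0)_!,$$
built from the unit of $(F_0)_!\dashv F_0\st$ and the counit of $(F_1)_!\dashv F_1\st$. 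Concretely, $\sigma_\gamma$ is the unique vertical map induced by the cocartesian lifting of $F_1$ at $s\st\gamma$: the composite $s\st\gamma\to\gamma\to(F_0)_!\gamma$ lies over $F_0 s=s'F_1$, so it factors uniquely through the cartesian lifting of $s'$ at $(F_0)_!\gamma$. For the codomain fibration this is exactly $F_1\x\id{}$. Naturality of $\tau_F$ follows from naturality of mates. Note that $\sigma$ is not assumed invertible; Beck--Chevalley is not needed here, since the square $F_0s=s'F_1$ is not a pullback.

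Next I verify the two comorphism axioms. For the unit axiom, $\tau_F\c(F_0)_!\eta^{\DD}=\eta^{\EE}(F_0)_!$, I will use the pasting description of $\eta^{\DD}$ through $i\st$, $i_!$ and the counit of $i_!\dashv i\st$, together with $F_1 i=i' F_0$ and the splitness identity $(F_1)_!i_!=i'_!(F_0)_!$. Both sides are then maps out of a cocartesian lifting, so it suffices to check that they agree after precomposition with the relevant lifting arrows, and hence over the same base morphism. The multiplication axiom is handled the same way using the pasting description of $\mu^{\DD}$. I will apply the mate of $F_2=F_1\x F_1\:D_1\x[D_0]D_1\to E_1\x[E_0]E_1$, which is compatible with both projections and with $\operatorname{comp}$, and use the fact that the Beck--Chevalley isomorphisms (BC) for $\DD$ and $\EE$ are intertwined by these mates. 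That compatibility follows from the uniqueness of factorizations through (co)cartesian liftings: everything reduces to checking equalities of maps out of the cocartesian lifting of $t\c\pi_2$ at $\pi_1\st s\st\gamma$, which lie over the equal composites $F_0 t\pi_2=t'\pi_2' F_2$ and so agree.

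Finally, for functoriality, splitness gives $(\id{})_!=\Id{}$, so $\sigma_{\id{}}$ is the identity. It also gives $(G_0F_0)_!=(G_0)_!(F_0)_!$, and mates compose, so $\tau_{GF}$ is the pasting of $\tau_F$ and $\tau_G$. Without splitness these equalities would only hold up to coherent isomorphism, yielding a pseudofunctor at best; this is precisely why the hypothesis is there. I expect the multiplication axiom to be the main obstacle, because it requires tracking the Beck--Chevalley isomorphism through the mate $\sigma$. I would first try to reduce it, via the universal property of cocartesian liftings, to an equality of morphisms in $\E$ over a single base arrow, as above.
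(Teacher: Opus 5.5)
Your construction of $\T^p_F$ is the paper's. Your $\tau_F$ is the pasting of the unit of $(F_0)_!\dashv F_0^{\ast}$, the isomorphism $s^{\ast}F_0^{\ast}\cong F_1^{\ast}{s'}^{\ast}$ and the counit of $(F_1)_!\dashv F_1^{\ast}$, followed by $t'_!$. The only difference is that the paper keeps an isomorphism $(F_0)_!t_!\cong t'_!(F_1)_!$ instead of invoking splitness there, so the comorphism exists for any bifibration; your functoriality argument is also the same.

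The gap is in the step that closes the comorphism axioms: the relevant maps out of $\pi_1^{\ast}s^{\ast}\gamma$ ``lie over the equal composites $F_0t\pi_2=t'\pi'_2F_2$ and so agree''. Two morphisms of $\E$ with the same image under $p$ need not be equal. Moreover, their common codomain $t'_!{s'}^{\ast}(F_0)_!\gamma$ is the codomain of a cocartesian lifting, which gives no universal property for maps into it. The principle proves too much. For any pair $(F_0,F_1)$ commuting with sources and targets, the two sides of the multiplication axiom, precomposed with the cocartesian lifting, lie over $F_0t\pi_2=t'\pi'_2F_2$. Yet for $p=\operatorname{cod}$, where $\tau_F=F_1\times\id{}$, they are $(g,f,c)\mapsto(F_1(gf),c)$ and $(g,f,c)\mapsto(F_1(g)F_1(f),c)$, which differ as soon as $F_1$ does not preserve composition. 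Similarly, in the unit axiom both sides lie over $F_0$, so base arrows alone cannot detect the identity axiom $F_1i=i'F_0$.

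What is missing is a second use of universal properties, on the cartesian side; this is exactly where the internal-functor axioms enter. Let $v\colon s^{\ast}\gamma\to{s'}^{\ast}(F_0)_!\gamma$ be your map over $F_1$, so that $\tau_F\circ\overline{F_0t}=\overline{t'}\circ v$. After precomposition with the cocartesian lifting (through the Beck--Chevalley comparison):
\begin{itemize}
\item the left side of the multiplication axiom becomes $\overline{t'}\circ v\circ[\mu_\gamma]$;
\item the right side becomes $\overline{t'}\circ[\mu'_{(F_0)_!\gamma}]\circ w$. Here $[\mu']$ is the analogue of $[\mu]$ for $\EE$, and $w\colon\pi_1^{\ast}s^{\ast}\gamma\to{\pi'_1}^{\ast}{s'}^{\ast}(F_0)_!\gamma$ is the factorization of $v\circ\overline{\pi_1}$ through $\overline{\pi'_1}$.
\end{itemize}
Obtaining the second form is your intertwining of the Beck--Chevalley isomorphisms. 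It too must be proved by composing with the cartesian arrow $\overline{s'}$, not by comparing base arrows.

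The maps $v\circ[\mu_\gamma]$ and $[\mu'_{(F_0)_!\gamma}]\circ w$ both land in the cartesian lifting ${s'}^{\ast}(F_0)_!\gamma$, and both compose with $\overline{s'}$ to $\overline{F_0}\circ\overline{s}\circ\overline{\pi_1}$. So they are equal precisely when their base arrows $F_1\circ\operatorname{comp}$ and $\operatorname{comp}'\circ(F_1\times F_1)$ agree, which is the composition axiom of $F$. This is what the paper expresses by saying that the composite adjunctions $(F_1)_!\operatorname{comp}_!\dashv\operatorname{comp}^{\ast}F_1^{\ast}$ and $(\operatorname{comp}')_!(F_1\times F_1)_!\dashv(F_1\times F_1)^{\ast}(\operatorname{comp}')^{\ast}$ have the same counit. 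The unit axiom is handled the same way: compare $v\circ[\eta_\gamma]$ with $[\eta'_{(F_0)_!\gamma}]\circ\overline{F_0}$, which both compose with $\overline{s'}$ to $\overline{F_0}$ and lie over $F_1i=i'F_0$.
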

\begin{proof}
    For now, consider $p$ a bifibration satisfying the usual assumptions, whose opfibration structure is not necessarily split.
    
    Given an internal functor $F\:\DD\to \EE$ exhibited by
    \begin{cd}[5.5]
{D_1 \times_{D_0} D_1} \arrow[r,"{\comp}"] \arrow[d,"{F_1\times F_1}"'] \&[-1ex]
{D_1} \arrow[r,shift left=1.6ex,"s"] \arrow[r,shift right=1.8ex,"t"'] \arrow[d,"{F_1}"'] \&[0.5ex]
{D_0} \arrow[l,"{i}"{description}] \arrow[d,"{F_0}"] \\
{E_1 \times_{E_0} E_1} \arrow[r,"{\comp'}"] \&
{E_1} \arrow[r,shift left=1.6ex,"s'"] \arrow[r,shift right=1.8ex,"t'"'] \&
{E_0} \arrow[l,"{i'}"{description}]
\end{cd}
we define a comorphism of monads $\T^p_{F}\:\T^p_{\DD}\to \T^p_{\EE}$ as follows. Its underlying functor is $(F_0)_!\:p^{-1}(D_0)\to p^{-1}(E_0)$. Note that $(F_0)_!$ is a left adjoint, of $(F_0)\st$, since $p$ is a bifibration. We then define the second component of $\T^p_F$ to be the natural transformation
\begin{cd}
    p^{-1}(D_0) \& p^{-1}(D_0) \\
	p^{-1}(E_0) \& p^{-1}(E_0)
	\arrow["{\T^p_{\DD}}", from=1-1, to=1-2]
	\arrow["{(F_0)_!}"', from=1-1, to=2-1]
	\arrow["{\tau^p_F}", Rightarrow, from=1-2, to=2-1, shorten <=2.7ex,shorten >=2.2ex]
	\arrow["{(F_0)_!}", from=1-2, to=2-2]
	\arrow["{\T^p_{\EE}}"', from=2-1, to=2-2]
\end{cd}
that has component ${(\tau^p_F)}_{\gamma}$ on $\gamma\in p^{-1}(D_0)$ induced by the universal property of the cocartesian lifting of $F_0\c t$ to $s\st(\gamma)$ as in the following diagram:
\begin{cd}[3][3]
    \& {{s'}\st({F_0}_{!}(\gamma))} \&\& {t'_{!}{s'}\st({F_0}_{!}(\gamma))} \\
	{s\st \gamma} \&\& {(F_0 \circ t)_{!}s\st \gamma} \\
	\& {E_1} \&\& {E_0} \\
	{D_1} \& {D_0} \& {E_0}
	\arrow["{\overline{t'}}", from=1-2, to=1-4]
	\arrow["p"{pos=0.2}, maps to, from=1-2, to=3-2]
	\arrow["p", maps to, from=1-4, to=3-4]
	\arrow["v", from=2-1, to=1-2]
	\arrow["{\overline{F_0\circ t}}"'{pos=0.7}, from=2-1, to=2-3]
	\arrow["p"', maps to, from=2-1, to=4-1]
	\arrow["{\exists !\lambda_{\gamma}}"', dashed, from=2-3, to=1-4]
	\arrow["p"{pos=0.3}, maps to, from=2-3, to=4-3]
	\arrow["{t'}"'{pos=0.3}, from=3-2, to=3-4]
	\arrow["{F_1}", from=4-1, to=3-2]
	\arrow["t"', from=4-1, to=4-2]
	\arrow["{F_0}"', from=4-2, to=4-3]
	\arrow[equals, from=4-3, to=3-4]
\end{cd}
where $v$ is the morphism induced by the universal property of the cartesian lifting of $s'$ to $(F_0)_!$, as in the following diagram:
\begin{cd}[3][3]
    {s\st \gamma} \&\& \gamma \& \\
	\& {{s'}\st({F_0}_{!}(\gamma))} \&\& {{F_0}_{!}(\gamma)} \\
	{D_1} \&\& {D_0} \\
	\& {E_1} \&\& {E_0}
	\arrow["{\overline{s}}", from=1-1, to=1-3]
	\arrow["{\exists ! v}", dashed, from=1-1, to=2-2]
	\arrow[maps to, from=1-1, to=3-1]
	\arrow["{\overline{F_0}}", from=1-3, to=2-4]
	\arrow[maps to, from=1-3, to=3-3]
	\arrow["{\overline{s'}}"'{pos=0.7}, from=2-2, to=2-4]
	\arrow[maps to, from=2-2, to=4-2]
	\arrow[maps to, from=2-4, to=4-4]
	\arrow["s"{pos=0.6}, from=3-1, to=3-3]
	\arrow["{F_1}"', from=3-1, to=4-2]
	\arrow["{F_0}", from=3-3, to=4-4]
	\arrow["{s'}"', from=4-2, to=4-4]
\end{cd}

It is straightforward to prove, using the universal property of (co)cartesian liftings, that $\tau^p_F$ coincides with the following pasting of natural transformations:
\begin{cd}
    {p^{-1}(D_0)} \&\& {p^{-1}(D_0)} \& {p^{-1}(D_1)} \& {p^{-1}(D_0)} \& \\
	\& {p^{-1}(E_0)} \& {p^{-1}(E_1)} \&\& {p^{-1}(E_1)} \& {p^{-1}(E_1)}
	\arrow[""{name=0, anchor=center, inner sep=0}, equal, from=1-1, to=1-3]
	\arrow["{(F_0)_!}"'{inner sep=0.3ex}, from=1-1, to=2-2]
	\arrow["{s\st}", from=1-3, to=1-4]
	\arrow["{t_!}", from=1-4, to=1-5]
	\arrow["{(F_1)_!}"{inner sep=0.3ex}, from=1-4, to=2-5]
	\arrow["{(F_0)_!}"{inner sep=0.3ex}, from=1-5, to=2-6]
	\arrow["{(F_0)\st}"{inner sep=0.3ex}, from=2-2, to=1-3]
	\arrow["{(s')\st}"', from=2-2, to=2-3]
	\arrow[shift left=9,xshift=7ex, iso, from=2-2, to=2-3]
	\arrow["{(F_1)\st}"{inner sep=0.3ex}, from=2-3, to=1-4]
	\arrow[""{name=1, anchor=center, inner sep=0}, equal, from=2-3, to=2-5]
	\arrow["{(t')_!}"', from=2-5, to=2-6]
	\arrow[shift left=9,xshift=-7ex, iso, from=2-5, to=2-6]
	\arrow["{\operatorname{unit}}"', between={0.3}{0.75}, Rightarrow, from=0, to=2-2]
	\arrow["{\operatorname{counit}}", between={0.3}{0.75}, Rightarrow, from=1-4, to=1]
\end{cd}
Then $\tau^p_F$ is clearly natural. Notice that the left part of the pasting above is a Beck--Chevalley transformation, associated to a square that is not a pullback in general.

We now prove that $((F_0)_!,\tau^p_F)$ is a comorphism of monads $\T^p_{\DD}\to \T^p_{\EE}$. In order to prove the axiom involving units, we use that the composite adjunction
$$(F_1)_!\c i_! \dashv i\st\c (F_1)\st$$
has the same counit (up to appropriately cancelled isomorphisms) of the composite adjunction
$$(i')_!\c(F_0)_! \dashv (F_0)\st \c (i')\st$$
thanks to the identity axiom of the internal functor $F$. Similarly, in order to prove the axiom involving multiplications, we use that the composite adjunction
$$(F_1)_!\c \operatorname{comp}_! \dashv \operatorname{comp}\st\c (F_1)\st$$
has the same counit (up to appropriately cancelled isomorphisms) of the composite adjunction
$$(\operatorname{comp}')_!\c(F_1\x F_1)_! \dashv (F_1\x F_1)\st \c (\operatorname{comp}')\st$$
thanks to the composition axiom of the internal functor $F$. It is then straightforward to conclude that $((F_0)_!,\tau^p_F)$ is a comorphism of monads.

Now, we further assume that the opfibration structure of $p$ is split. We show that the assignment defined then yields a functor
\begin{fun}
 	\T^p & \: & \Cat(\C) & \too & \Mndcoadj\\[1ex]
    && \fib{\DD}{(F_0,F_1)}{\EE} & \mto & \fib{\T^p_{\DD}}{((F_0)_!,\,\tau^p_F)}{\T^p_{\EE}}
\end{fun}
Using the normality of $p$ and the assumption that the opfibration part of $p$ is split, it is clear that $\T^p$ preserves identities. It is then straightforward to see that $\T^p$ also preserves composition, using the interchange law and that the counit of a composite adjunction coincides with the pasting of the counits of the two adjunctions.
\end{proof}

\begin{remark}\label{remstillcomorph}
    Note that the proof of the theorem above shows that $((F_0)_!,\tau^p_F)$ is a comorphism of monads even if the opfibration structure of $p$ is not split.

    To ensure the functoriality of $\T^p$, though, we need the split condition, as we want $(G_0\c F_0)_!=(G_0)_!\c (F_0)_!$. The codomain bifibration trivially satisfies this, so the proof above really is a generalization of the proof of \thex\ref{theorTisafunctor}.
\end{remark}

\begin{remark}
    The composite of functors
    $${\Cat(\C)}\op\aarr{(\T^p)\op} {(\Mndcoadj)}\op \aar{\Alg(-)} \Cat$$
    provides a functor $\Actp[\C]{p}{-}\:{\Cat(\C)}\op\to \Cat$ that takes categories of actions of the internal category $\DD$ with respect to $p$, thanks to \remx\ref{remactionswrtp} (see also \remx\ref{consactionswrtp}).
\end{remark}

In section \ref{sec:chargenactmnd}, we saw that a key structure of generalized action monads is the associated natural transformation $\alpha\:\dom\c \T_{\DD}\aR{}\dom$, that extends $\dom\:\C/{D_0}\to \C$ to a comorphism of monads from $\T_{\DD}$ to $\Id{\C}$. We now present the fibrational generalization of such $\alpha$.

\begin{cons}
    Let $\DD$ be an internal category in $\C$ and consider the unique internal functor $Q$ from $\DD$ to the terminal category $\1$ in $\C$. Following \prox\ref{propalphaisimageofQ}, we can define $\alpha^p$, generalizing $\alpha$ of section \ref{sec:chargenactmnd} to the fibrational context, to be the second component of the comorphism of monads $\T^p_Q\:\T^p_{\DD}\to \Id{p^{-1}(1)}$ associated to $Q$ (see also \remx\ref{remstillcomorph}).

    Explicitly, $\alpha^p$ is given by the following pasting of natural transformations:
    \begin{cd}
    {p^{-1}(D_0)} \&\& {p^{-1}(D_0)} \& {p^{-1}(D_1)} \& {p^{-1}(D_0)} \& \\
	\& {p^{-1}(1)} \& {p^{-1}(1)} \&\& {p^{-1}(1)} \& {p^{-1}(1)}
	\arrow[""{name=0, anchor=center, inner sep=0}, equal, from=1-1, to=1-3]
	\arrow["{Q_!}"'{inner sep=0.3ex}, from=1-1, to=2-2]
	\arrow["{s\st}", from=1-3, to=1-4]
	\arrow["{t_!}", from=1-4, to=1-5]
	\arrow["{Q_!}"{inner sep=0.3ex}, from=1-4, to=2-5]
	\arrow["{Q_!}"{inner sep=0.3ex}, from=1-5, to=2-6]
	\arrow["{Q\st}"{inner sep=0.3ex}, from=2-2, to=1-3]
	\arrow[equal, from=2-2, to=2-3]
	\arrow[shift left=9,xshift=7ex, iso, from=2-2, to=2-3]
	\arrow["{Q\st}"{inner sep=0.3ex}, from=2-3, to=1-4]
	\arrow[""{name=1, anchor=center, inner sep=0}, equal, from=2-3, to=2-5]
	\arrow[equal, from=2-5, to=2-6]
	\arrow[shift left=9,xshift=-7ex, iso, from=2-5, to=2-6]
	\arrow["{\operatorname{unit}}"', between={0.3}{0.75}, Rightarrow, from=0, to=2-2]
	\arrow["{\operatorname{counit}}", between={0.3}{0.75}, Rightarrow, from=1-4, to=1]
\end{cd}
\end{cons}

\begin{prop}
    $\alpha^p$ coincides with the following pasting of natural transformations:
    \begin{cd}
        {p^{-1}(D_0)} \& {p^{-1}(D_1)} \& {p^{-1}(D_0)} \\
	\& {p^{-1}(D_0)} \& {p^{-1}(1)}
	\arrow["{s\st}", from=1-1, to=1-2]
	\arrow[""{name=0, anchor=center, inner sep=0}, curve={height=6pt}, equals, from=1-1, to=2-2]
	\arrow["{t_{!}}", from=1-2, to=1-3]
	\arrow[""{name=1, anchor=center, inner sep=0}, "{s_!}", from=1-2, to=2-2]
	\arrow[""{name=2, anchor=center, inner sep=0}, "{Q_!}", from=1-3, to=2-3]
	\arrow[from=2-2, to=2-3, "Q_!"']
	\arrow["{\operatorname{counit}}", between={0}{0.8}, Rightarrow, from=1-2, to=0]
	\arrow[between={0.2}{0.8}, iso, from=1, to=2]
    \end{cd}
  As a consequence, $\alpha^{\operatorname{cod}}$ coincides with the $\alpha$ constructed in \remx\ref{remintrodalpha}.
\end{prop}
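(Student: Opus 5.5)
The plan is to simplify the explicit pasting that defines $\alpha^p$ (the instance of $\tau^p_F$ for $F=Q\:\DD\to\1$) by tracking the units and counits, rewriting it into the claimed two-cell shape, and then specializing to $p=\cod$.

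\textbf{Step 1: trivialize the left triangle and the $t$-side.} Since $p$ is normal and $Q_1=Q_0=Q$ lands in the terminal object, the functors $(s')\st$ and $(t')_!$ over $1$ are identities. The left-hand part of the pasting is then the unit $\Id{}\aR{} Q\st Q_!$ of $Q_!\dashv Q\st$ (on $p^{-1}(D_0)$), followed by the canonical isomorphism $Q\st\iso s\st Q\st$ coming from $Q\c s=Q$. By the triangle identity, whiskering this unit with the subsequent counit of $Q_!\dashv Q\st$ on $p^{-1}(D_1)$ will cancel. To set this up, I will use the pseudofunctoriality isomorphisms $Q_!\iso Q_!\c s_!$ (on $p^{-1}(D_1)$) and $Q\st\iso s\st\c Q\st$, and decompose the counit of $Q_!\dashv Q\st$ on $p^{-1}(D_1)$ as the counit of the composite adjunction $Q_! s_!\dashv s\st Q\st$. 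That counit is the pasting of the counit of $s_!\dashv s\st$ with the counit of $Q_!\dashv Q\st$ on $p^{-1}(D_0)$, whiskered appropriately.

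\textbf{Step 2: cancel and reassemble.} After this decomposition the unit of $Q_!\dashv Q\st$ on $p^{-1}(D_0)$ meets the counit of the same adjunction. The triangle identity kills this pair, and the coherence of the pseudofunctor $[p]$ from the earlier Remark ensures the leftover isomorphisms cancel. What remains is $Q_! t_! s\st \iso Q_! s\st\cdots$; more precisely, the remaining $Q_!t_!$ is identified with $(Q\c t)_!=(Q\c s)_!\iso Q_!s_!$. Whiskering by the counit $s_!s\st\aR{}\Id{}$ then yields exactly the displayed pasting. I expect this bookkeeping of coherence isomorphisms to be the main obstacle. I would handle it componentwise, using the universal property of cocartesian liftings: both sides, evaluated at $\gamma$, are the unique vertical map out of $(Q\c t)_! s\st\gamma$ that factors the lift $\overline{s}^\gamma$ followed by the cocartesian lift of $Q$. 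Checking this characterization for each side avoids the 2-cell pasting entirely.

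\textbf{Step 3: specialize to $p=\cod$.} Here $s_!$ is postcomposition with $s$, and $Q_!\:\C/D_0\to\C/1\simeq\C$ is $\dom$. The counit of $s_!\dashv s\st$ at $\gamma$ is the pullback projection $\pi_2\:D_1\x[D_0]C\to C$. The isomorphism $Q_!t_!\iso Q_!s_!$ is an identity on domains. Hence $\alpha^{\cod}_\gamma=\pi_2$, which is the $\alpha$ of \remx\ref{remintrodalpha}. This is consistent with \prox\ref{propalphaisimageofQ}.
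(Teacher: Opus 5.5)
Your proposal is correct, and its decisive step is the paper's argument. The paper just checks, via the universal properties of the (co)cartesian liftings, that both sides at $\gamma$ are the unique vertical map $(Q\circ t)_!\,s^{\ast}\gamma\to Q_!\gamma$ whose precomposite with the cocartesian lift of $Q\circ t$ equals the cocartesian lift of $Q$ at $\gamma$ composed with $\overline{s}^{\gamma}$; the case $p=\cod$ then follows as in your Step 3. Your extra 2-cell route is also valid: you rewrite the counit on $p^{-1}(D_1)$ as the counit of the composite adjunction $Q_!s_!\dashv s^{\ast}Q^{\ast}$, move the counit of $s_!\dashv s^{\ast}$ past the unit by interchange, and cancel with a triangle identity. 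This works because, in a bifibration, the pseudofunctoriality isomorphisms for $(-)^{\ast}$ and $(-)_!$ are mates of each other.
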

\begin{proof}
    The proof is straightforward, using the universal property of the involved (co)cartesian liftings.
\end{proof}

\begin{rem}
    We have proved that every internal category in $\C$ still yields an associated monad, with respect to the bifibration $p\:\E\to \C$, generalizing the action monad we constructed in Section \ref{secmonadicity}. However, trying to rebuild an internal category from a monad proves harder in the general fibrational setting. For example, the way in which we built the target map of the internal category was to consider $T(\id{D_0})\in \C/{D_0}$. But for a general bifibration $p$, the identity of $D_0$ does not yield an object in $p^{-1}(D_0)$, and anyway $T(\id{D_0})\in p^{-1}(D_0)$ would not be of the right type to give the target map of an internal category. We believe that this opens the way to an interesting fibrational generalization of the concept of internal category. The pullback $D_1\x[D_0]D_1$ in the definition of an internal category, that produces pairs of composable morphisms, could be replaced with a cartesian lifting with respect to a fibration. And the target map could be replaced with an object in $p^{-1}(D_0)$. We conjecture that such a fibrational generalization of the notion of internal category would be more suitable to provide a generalization of \thex\ref{theormainequivalence}. We leave this to further work.
\end{rem}

 \section{Monadic descent data and realization via right modules}
\label{sec:monadic-descent-right-modules}
In this section, we apply our results to descent theory. We prove that every generalized action monad yields a well-behaved notion of generalized descent. This captures and gives new insights to both the classical theory of descent along a morphism and Galois descent (recalled in section \ref{secprelim}). At the same time, it opens the way to new applications. We will show some further examples in the next section. We compare our results with the existing literature, in particular with the theory of monadic lax descent originated from B\'{e}nabou--Roubaud~\cite{BenabouRoubaud}, and that of lax descent objects brought forward by Lucatelli Nunes~\cite{NunesDescentKan}. We show that, interestingly, generalized action monads stand in a sweet spot for descent theory. 

In the framework of monadic descent, a generalized notion of descent can be associated to any monad. Our theory will be embedded inside this general framework, but from another perspective it will actually provide a generalization of monadic descent. Restricting ourselves to generalized action monads gives an advantage: the theory then satisfies better properties and becomes closer to what happens in the classical descent theory. One important example of this is that we can use the structure of generalized action monad to construct a comparison functor, and we can then use coequalizers to also construct a realization functor, that is left adjoint to the comparison functor. 

We will see that a key ingredient in our theory will be that of a left adjoint right module over a monad, which slightly generalizes the concept of generalized action monad. Throughout this section, we will work with this slightly more general concept.

We first briefly recall monadic lax descent data and comparison
functors. Then, we define the comparison functor associated to a left adjoint right $T$-module and the relative realization functor; see
Definitions~\ref{def:left-right-T-modules}, \ref{def:tensor-over-T}, \ref{def:Phi-alpha}, and
\ref{def:comparison-functor-module}. Having a right module which is also a left adjoint will be a key ingredient for Theorem~\ref{thm:Phi-adj-K}, which establishes an adjunction between the realization functor and the comparison.

One of our main results is then to compare the generalized descent
situation associated with left adjoint right $T$-modules with ordinary \vvv{C}ech descent. In Theorem~\ref{thm:monadic-descent-rigidity}, we prove that the case of effective descent for left adjoint right $T$-modules can be reduced to the usual monadic descent. Applied to internal categories, this gives
Corollary~\ref{thm:internal-category-descent}: the comparison
\(\C\to\C^{\mathbb{D}}\) is an equivalence precisely when \(\mathbb{D}\) is the \vvv{C}ech groupoid
on \(D_0\) and \(D_0\to 1\) is an effective descent morphism. The final
equivariant case recovers Galois descent for torsors, as in
Corollary~\ref{cor:galois-descent-torsor}. Interestingly, similar rigidity criteria do not hold for descent, as opposed to effective descent.

\subsection{Monadic lax descent and comparison.}
\label{subsec:monadic-lax-descent-comparison}
 Let \(T=(T,\eta,\mu)\) be a monad on \(\mathcal E\). We shall use the term \emph{monadic lax descent datum} for a \(T\)-algebra, that is, a morphism \[ a:TX\to X \] satisfying \[ a\eta_X=\Id{X}, \qquad aT(a)=a\mu_X. \] These are the monadic forms of the unit and cocycle identities which occur in lax descent. Thus the Eilenberg--Moore category \(\Alg(T)\) may be regarded as the category of monadic lax descent data associated with \(T\). This is a formal monadic use of descent terminology: it does not assert that \(T\) comes from a geometric cover or from a \(\check{\mathrm C}\)ech groupoid. It is related to the classical monadic descent theorem of B\'{e}nabou--Roubaud, to Street's formal theory of monads, and to the 2-categorical theory of lax descent objects; see \cite{BenabouRoubaud,StreetFormalTheory,NunesDescentKan,NunesSemanticFactorization}. The category \(\Alg(T)\) always comes with its forgetful functor \[ U:\Alg(T)\to\mathcal E. \] A comparison functor from a category of global objects is given by a choice of a functor \(R:\mathcal C\to\mathcal E\) and a lift of \(R\) through \(U\). We recall this elementary notion first. 

\begin{defi} Let \(U:\mathcal A\to\mathcal E\) and \(R:\mathcal C\to\mathcal E\) be functors. A \emph{lift of \(R\) along \(U\)} is a functor \( \widetilde R:\mathcal C\to\mathcal A \) such that
\[ \begin{tikzcd} & \mathcal A \arrow[d,"U"] \\ \mathcal C \arrow[ur,dashed,"\widetilde R"] \arrow[r,swap,"R"] & \mathcal E . \end{tikzcd} \] 

In descent terminology, such a lift is the comparison functor: it sends a
global object \(C\in\mathcal C\) to the monadic lax descent datum carried by
its underlying object \(R(C)\in\mathcal E\).
\end{defi}

The next proposition is the form of the Eilenberg--Moore universal property used
below. In a \(2\)-category, the Eilenberg--Moore object of a monad represents
algebras for the induced monad on hom-categories; see
\cite[Section~3.1]{NunesSemanticFactorization} and
\cite{StreetFormalTheory}. In \(\mathbf{Cat}\), for a monad \(T\) on
\(\mathcal E\), the induced monad on \(\mathcal E^{\mathcal C}\) sends a
functor \(R:\mathcal C\to\mathcal E\) to \(TR\). Following the standard terminology of modules over monads as actions of monads
on functors~\cite{PirogWuGibbons}, we shall call an algebra for the induced
monad \(R\mapsto TR\) on \(\mathcal E^{\mathcal C}\) a left \(T\)-module.

\begin{defi}
\label{def:left-right-T-modules}
Let \(T=(T,\eta,\mu)\) be a monad on \(\mathcal E\).

A \emph{right \(T\)-module with values in \(\mathcal C\)} is a pair
\((L,\alpha)\), where \(L:\mathcal E\to\mathcal C\) is a functor and
\(\alpha:LT\Rightarrow L\) is a natural transformation satisfying
\[
\alpha L\eta=\Id{L},
\qquad
\alpha L\mu=\alpha\alpha_T.
\]
We call \((L,\alpha)\) a \emph{left adjoint right \(T\)-module} if \(L\) is a
left adjoint.

Dually, a \emph{left \(T\)-module on a
functor} \(R:\mathcal C\to\mathcal E\) is a natural transformation
\(\rho:TR\Rightarrow R\) satisfying
\[
\rho\eta_R=\Id{R},
\qquad
\rho\mu_R=\rho T\rho.
\]
We call \((R,\rho)\) a \emph{right adjoint left \(T\)-module} if \(R\) is a
right adjoint.

\end{defi}


\begin{rem}
\label{rem:modules-as-monad-morphisms}
With our conventions on morphisms and comorphisms of monads, a right
\(T\)-module structure
\(
\alpha:LT\Rightarrow L
\)
is equivalently a comorphism of monads
\[
(L,\alpha):(\mathcal E,T)\longrightarrow
(\mathcal C,\Id{\mathcal C}).
\]
Indeed, since the target monad is the identity, the monad-comorphism axioms are
exactly the unit and multiplication axioms for the right \(T\)-module
\(\alpha\).
Similarly, a left \(T\)-module structure
\(
\rho:TR\Rightarrow R
\)
is equivalently a morphism of monads
\[
(R,\rho):(\mathcal C,\Id{\mathcal C})
\longrightarrow
(\mathcal E,T).
\]

When \(L\dashv R\), by subsection \ref{subsec:monad-morphisms-comorphisms}, we can then turn a structure of right $T$-module on $L$ into a structure of left $T$-module on $R$, and vice versa. More precisely, the left $T$-module is the mate of the right $T$-module $\alpha$. This is very useful for defining the comparison functor associated to a left adjoint right $T$-module, presented below.
\end{rem}

The observations made in \remx\ref{rem:modules-as-monad-morphisms} provide the link between generalized action monads and the module-theoretic framework used throughout this section.

\begin{cor}\label{corollactmndisleftTmod}
Every generalized action monad $(D_0,T,\alpha)$ on $\C$ determines a left adjoint right $T$-module structure on the pair \((\dom,\alpha)\), where \(\dom:\C/D_0\to\C\).

As a consequence, under the equivalence of categories presented in \thex\ref{theormainequivalence}, every internal category $\DD$ in $\C$ determines a left adjoint right $T_{\DD}$-module, where $T_{\DD}$ denotes the generalized action monad associated with $\DD$.
\end{cor}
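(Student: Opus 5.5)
The plan is to read the statement off from results already established, with essentially no new computation. An object $(D_0,T,\alpha)$ of $\GAMnd{\C}$ comes, by definition, with a comorphism of monads
\[
(\dom,\alpha)\:(\C/D_0,T)\longrightarrow(\C,\Id{\C}).
\]
By \remx\ref{rem:modules-as-monad-morphisms}, a comorphism of monads into the identity monad on $\C$ is exactly a right $T$-module with values in $\C$. Concretely, the monad-comorphism axioms are precisely the two identities $\alpha\,\dom\eta=\id{\dom}$ and $\alpha\,\dom\mu=\alpha\,\alpha_T$. These are also the two compatibility axioms displayed in \prox\ref{propcomorphismofmonads}. So $(\dom,\alpha)$ is a right $T$-module.

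It remains to see that $\dom$ is a left adjoint. This was already noted in the proof of \prox\ref{propcomorphismofmonads}: $\dom\:\C/D_0\to\C$ is left adjoint to the functor $C\mapsto(\pi_2\:C\times D_0\to D_0)$. Here we use that $\C$ has finite limits. The unit at $\gamma\:C\to D_0$ is $\langle\id{C},\gamma\rangle$, and the counit is the first projection. Hence $(\dom,\alpha)$ is a left adjoint right $T$-module in the sense of \defx\ref{def:left-right-T-modules}. This proves the first assertion. Cartesianness of $\alpha$ is not needed for this step, but it is harmless.

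For the second assertion, I would take an internal category $\DD$ and apply \thex\ref{theormainequivalence}. The functor $\T$ factors through $\GAMnd{\C}$, so $\T(\DD)=(D_0,\T_{\DD},\alpha)$ is a generalized action monad, with $\alpha$ the transformation of \remx\ref{remintrodalpha}. That it is a comorphism is also \corx\ref{comorp}. Applying the first part then gives the left adjoint right $\T_{\DD}$-module $(\dom,\alpha)$.

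There is no real obstacle in this argument. The only point to check carefully is the orientation of the conventions: the direction of $\alpha\:\dom\c T\Rightarrow\dom$ and the order of whiskering in the multiplication axiom must match $\alpha\,\alpha_T$ rather than a variant. This is settled by comparing the pasting diagrams in \prox\ref{propcomorphismofmonads} with \defx\ref{def:left-right-T-modules}.
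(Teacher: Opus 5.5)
Your proposal is correct and follows essentially the same route as the paper. You read the right $T$-module axioms off the comorphism $(\dom,\alpha)\colon(\C/D_0,T)\to(\C,\Id{\C})$ via \remx\ref{rem:modules-as-monad-morphisms}, observe that $\dom$ is left adjoint to $C\mapsto(C\times D_0\to D_0)$, with unit $\langle\id{C},\gamma\rangle$ and counit the first projection (both correct), and then transport the result to internal categories using the factorization of $\T$ through $\GAMnd{\C}$.
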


\begin{proof}
By definition of a generalized action monad, $T$ is a monad on $\C/D_0$, and $\alpha:\dom T\Rightarrow \dom$ extends $\dom$ to a comorphism of monads
$
(\dom,\alpha):(\C/D_0,T)\longrightarrow(\C,\Id{\C})
$. Equivalently, $\alpha$ satisfies
$\alpha\circ \dom\eta=\Id{\dom},
\
\alpha\circ \dom\mu=\alpha\circ \alpha T$, which are precisely the axioms for a right $T$-module structure on $\dom$ (see \remx\ref{rem:modules-as-monad-morphisms}). Moreover, the functor $\dom:\C/D_0\to\C$ is a left adjoint. Its right adjoint sends an object $A$ of $\C$ to the projection $\pi_1:D_0\times A\to D_0$. After all, section \ref{sec:chargenactmnd} shows indeed that $\GAMnd{\C}$ is a subcategory of $\Mndcoadj$.
\end{proof}

For an internal category $\DD=(D_1\rightrightarrows D_0)$, the module structure is the evident one: at an object $\gamma:X\to D_0$, the component of $\alpha_{\DD}$ is the second projection
$$
\alpha_{\DD,\gamma}:D_1\times_{s,D_0,\gamma}X\to X.
$$

\begin{prop}
\label{prop:left-modules-right-modules-lifts}
Let \(T=(T,\eta,\mu)\) be a monad on \(\mathcal E\), and let
\[
L:\mathcal E\rightleftarrows\mathcal C:R
\]
be an adjunction.
Take \(S=RL\), with its induced monad structure. Then the following data are
equivalent:
\begin{enumerate}
\item a right \(T\)-module structure on \(L\);
\item a comorphism of monads
\(
\theta:T\longrightarrow S=RL
\)
over $\mathcal E$;
\item a left \(T\)-module structure on \(R\);
\item a lift of \(R\) through the Eilenberg--Moore forgetful functor
\(
U:\Alg(T)\to\mathcal E.
\)
\end{enumerate}
\end{prop}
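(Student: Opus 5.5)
I will prove the equivalences in the order (1)$\Leftrightarrow$(3), (3)$\Leftrightarrow$(4), (1)$\Leftrightarrow$(2). Throughout, write $\iota\colon \Id{\mathcal E}\Rightarrow RL$ and $\varepsilon\colon LR\Rightarrow\Id{\mathcal C}$ for the unit and counit of $L\dashv R$. The monad $S=RL$ then has unit $\iota$ and multiplication $R\varepsilon L$.

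\textbf{(1)$\Leftrightarrow$(3).} This is the calculus of mates of \remx\ref{rem:modules-as-monad-morphisms}. By that remark, a right $T$-module $\alpha\colon LT\Rightarrow L$ is a comorphism $(L,\alpha)\colon(\mathcal E,T)\to(\mathcal C,\Id{\mathcal C})$. Its mate
$$\rho=R\varepsilon\circ R\alpha_R\circ \iota_{TR}\colon TR\Rightarrow R$$
is, by subsection~\ref{subsec:monad-morphisms-comorphisms}, a morphism of monads $(R,\rho)\colon(\mathcal C,\Id{\mathcal C})\to(\mathcal E,T)$, that is, a left $T$-module on $R$. Conversely, a left module $\rho$ gives
$$\alpha=\varepsilon_L\circ L\rho_L\circ LT\iota .$$
The triangle identities show that these two assignments are mutually inverse. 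One checks that the module axioms are transported, using the standard fact that mates respect pasting.

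\textbf{(3)$\Leftrightarrow$(4).} A lift $\widetilde R$ of $R$ through $U$ sends each $C$ to a $T$-algebra $(RC,\rho_C)$, and it acts on morphisms as $R$ does. Functoriality of $\widetilde R$ says exactly that each $Rf$ is an algebra map, i.e.\ that $\rho$ is natural. The algebra axioms for each $\rho_C$ are the pointwise forms of the left module axioms. This is the Eilenberg--Moore universal property recalled above: lifts through $U$ correspond to algebras for $R\mapsto TR$ on $\mathcal E^{\mathcal C}$.

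\textbf{(1)$\Leftrightarrow$(2).} I read ``comorphism over $\mathcal E$'' as a comorphism whose underlying functor is $\Id{\mathcal E}$, which amounts to a monad map $\theta\colon T\Rightarrow RL$ compatible with units and multiplications. Given $\alpha$, set $\theta=R\alpha\circ\iota_T$, the transpose of $\alpha$. Conversely, given $\theta$, set $\alpha=\varepsilon_L\circ L\theta$. These assignments are mutually inverse by adjunction. The remaining work is the axiom check:
\begin{itemize}
\item the unit axiom $\theta\eta=\iota$ transposes to $\alpha L\eta=\Id{L}$;
\item the multiplication axiom $\theta\mu=R\varepsilon L\circ\theta\theta$ transposes to $\alpha L\mu=\alpha\alpha_T$.
\end{itemize}

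\textbf{Main obstacle.} I expect the only delicate step to be the multiplication axiom in (1)$\Leftrightarrow$(2). To handle it, I would transpose both sides to maps $LTT\Rightarrow L$. I would then expand $\theta\theta=\theta_{RL}\circ T\theta$ and use naturality of $\varepsilon$ together with one triangle identity to rewrite $\varepsilon_L\circ L R\varepsilon_L\circ L\theta_{RL}\circ LT\theta$ as $\alpha\circ\alpha_T$. Everything else is routine mate bookkeeping.
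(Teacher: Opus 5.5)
Your proposal is correct and follows essentially the same route as the paper: the same transpose formulas for (1)$\Leftrightarrow$(2), the same mate formulas for (1)$\Leftrightarrow$(3), and the Eilenberg--Moore universal property for (3)$\Leftrightarrow$(4), with your version spelling out the multiplication check in more detail. One small correction to that check: it actually uses naturality of $\varepsilon$ together with naturality of $\alpha$ (equivalently of $\theta$), not a triangle identity; the triangle identities are needed only for the unit axiom and for showing that the two transpositions are mutually inverse.
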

\begin{proof} 
First, a right \(T\)-module structure
\(
\alpha:LT\Rightarrow L
\)
is equivalently a comorphism of monads
\(
\theta:T\longrightarrow RL.
\)
over $\E$. The correspondence is given by
\[
\theta_X:
TX
\xrightarrow{\;\iota_{TX}\;}
RLTX
\xrightarrow{\;R(\alpha_X)\;}
RLX 
\]
where $\iota$ is the unit of $S$. Conversely,
\[
\alpha_X:
LTX
\xrightarrow{\;L\theta_X\;}
LRLX
\xrightarrow{\;\varepsilon_{LX}\;}
LX .
\]
Under this correspondence, the right \(T\)-module identities
\[
\alpha\circ L\eta=\Id{L},
\qquad
\alpha\circ L\mu=\alpha\circ \alpha_T
\]
are exactly the monad-comorphism axioms for
\(
\theta:T\to RL.
\)

(1) and (3) are equivalent by the standard mate correspondence for the adjunction \(L\dashv R\), together with its compatibility with horizontal and vertical composition; see, for instance, \cite[Section~4.3]{RiehlContext} and \cite{KellyStreet2Cat}. Under this correspondence, a transformation \(\alpha:LT\Rightarrow L\) has mate \(\rho:TR\Rightarrow R\), whose component at \(C\in\mathcal C\) is \[ TRC \xrightarrow{\;\iota_{TRC}\;} RLTRC \xrightarrow{\;R(\alpha_{RC})\;} RLRC \xrightarrow{\;R(\varepsilon_C)\;} RC . \] Conversely, a transformation \(\rho:TR\Rightarrow R\) has mate \(\alpha:LT\Rightarrow L\), whose component at \(X\in\mathcal E\) is \[ LTX \xrightarrow{\;LT(\iota_X)\;} LTRLX \xrightarrow{\;L(\rho_{LX})\;} LRLX \xrightarrow{\;\varepsilon_{LX}\;} LX . \] By compatibility of mates with composition, the identities \[ \alpha L\eta=\Id{L}, \qquad \alpha L\mu=\alpha\alpha_T \] correspond respectively to \[ \rho\eta_R=\Id{R}, \qquad \rho\mu_R=\rho T\rho . \] Thus right \(T\)-module structures on \(L\) correspond exactly to left \(T\)-module structures on \(R\).

The equivalence of (3) and (4) is an application of the universal property of
Eilenberg--Moore objects recalled in \cite[Section~3.1]{NunesSemanticFactorization};
see also \cite{StreetFormalTheory}. Applied in the \(2\)-category
\(\mathbf{Cat}\) to the monad \(T\) on \(\mathcal E\), it gives, for every
category \(\mathcal C\), an isomorphism
\[
\mathbf{Cat}(\mathcal C,\operatorname{Alg}(T))
\iso
\operatorname{Alg}\bigl(\mathcal E^{\mathcal C},\,R\mapsto TR\bigr),
\]
which is 2-natural. Under this isomorphism, a functor \(K:\mathcal C\to\operatorname{Alg}(T)\) is
sent to its underlying functor \(UK:\mathcal C\to\mathcal E\), equipped with the
algebra structure induced objectwise from the algebras \(K(C)\).

Thus, after fixing \(R:\mathcal C\to\mathcal E\), the lifts
\(K:\mathcal C\to\operatorname{Alg}(T)\) with \(UK=R\) are exactly the algebra
structures on \(R\) for the monad \(R\mapsto TR\) on \(\mathcal E^\mathcal C\).
Such a structure is precisely a natural transformation
\(\rho:TR\Rightarrow R\) satisfying
\[
\rho\eta_R=\Id{R},
\qquad
\rho\mu_R=\rho T\rho. \qedhere
\]
 \end{proof} 

We now describe explicitly how to construct a comparison functor from a left adjoint right $T$-module. This requires the choice of a right adjoint, but the resulting functor is canonical up to a unique natural isomorphism.

\begin{defi}
\label{def:comparison-functor-module}
Let $T$ be a monad on $\mathcal E$, let $(L,\alpha)$ be a left adjoint right $T$-module with values in $\mathcal C$, and choose an adjunction
$
L:\mathcal E\rightleftarrows\mathcal C:R.
$
Let $U_T:\Alg(T)\to\mathcal E$ be the forgetful functor. The \emph{comparison functor} associated with $(L,\alpha)$ and the adjunction $L\dashv R$ is the unique functor
$
K_\alpha:\mathcal C\to\Alg(T)
$
such that \(U_TK_\alpha=R\), and such that the induced \(T\)-algebra structure
on \(R\) is the mate \(\rho:TR\Rightarrow R\) of \(\alpha\).

Equivalently, if $\rho$ is the left $T$-module structure corresponding to $\alpha$ under $L\dashv R$, then
$
K_\alpha(C)=(R(C),\rho_C)
$
and
$
K_\alpha(u)=R(u).
$
\end{defi}

\begin{rem}
\label{rem:comparison-functor-as-em-functor}
With our convention on (co)morphisms of monads, the left $T$-module structure $\rho:TR\Rightarrow R$ is the same thing as a morphism of monads
$$
(R,\rho):(\mathcal C,\Id{\mathcal C})\longrightarrow(\mathcal E,T).
$$
The comparison functor $K_\alpha$ is precisely the functor induced on Eilenberg--Moore categories by this morphism of monads:
$$
K_\alpha=\Alg(R,\rho):\Alg(\Id{\mathcal C})\longrightarrow\Alg(T),
$$
after identifying $\Alg(\Id{\mathcal C})$ with $\mathcal C$.
\end{rem}

\begin{rem}\label{remclassicalcomparison}
It is useful to keep separate this construction from the usual comparison functor of an adjunction. In Definition~\ref{def:comparison-functor-module}, the monad $T$ is not assumed to be the monad induced by $L\dashv R$. Thus $K_\alpha$ is, in general, not the classical comparison functor of the adjunction. It is the comparison functor determined by the additional module structure $\alpha:LT\Rightarrow L$.

In the special case where $T=RL$ is the monad induced by $L\dashv R$, the counit gives the canonical right $T$-module structure
$
\varepsilon L:LRL\Rightarrow L.
$
Its mate is
$
R\varepsilon:RLR\Rightarrow R.
$
The corresponding comparison functor is therefore
$$
K_{\varepsilon L}:\mathcal C\to\Alg(RL),
\qquad
K_{\varepsilon L}(C)=(RC,R\varepsilon_C),
$$
which is the usual comparison functor of the adjunction $L\dashv R$. 

In this sense, our theory generalizes that of monadic descent, as it allows for the two monads $T$ and $RL$ to be different. The comparison between the two theories will be further expanded in subsection \ref{subsecmonadiceffdesctheor}.
\end{rem}

We shall measure the effectiveness of the descent data encoded by a left adjoint right $T$-module $(L,\alpha)$ through its comparison functor.

\begin{defi}\label{def:descent}
Let $(L,\alpha)$ be a left adjoint right $T$-module, and let
$
K_\alpha:\mathcal C\to\Alg(T)
$
be its comparison functor. We say that $(L,\alpha)$ satisfies \emph{descent} if $K_\alpha$ is fully faithful, and \emph{effective descent} if $K_\alpha$ is an equivalence of categories.
\end{defi}

\begin{rem}\label{rem}
The preceding definition is independent of the chosen right adjoint. Indeed, any two right adjoints of $L$ are uniquely isomorphic as right adjoints. If
$
L\dashv R
$
and
$
L\dashv R'
$
are two choices, the canonical isomorphism $R\simeq R'$ identifies the corresponding left $T$-module structures obtained as mates of $\alpha$. Hence the two comparison functors
$
\mathcal C\to\Alg(T)
$
are naturally isomorphic. In particular, the properties of being fully faithful or an equivalence are independent of the chosen adjunction.
\end{rem}

\begin{rem}\label{rem:triangle-UKR}
Let $U:\Alg(T)\to\mathcal E$ denote the forgetful functor. By construction, the comparison functor associated with a left adjoint right $T$-module is a lift of the right adjoint $R$ through $U$; in other words,
$
UK_\alpha=R.
$
Thus one has the commutative triangle
\[
\begin{tikzcd}
\mathcal C \arrow[rr,"K_\alpha"] \arrow[dr,swap,"R"]
& & \Alg(T) \arrow[dl,"U"] \\
& \mathcal E &
\end{tikzcd}
\]
This is the abstract analogue of the usual descent comparison functor: a global object is first viewed locally by applying $R$, and the module structure equips it with its canonical monadic descent datum.
\end{rem}

We now introduce the realization functor associated with a right $T$-module. This construction does not require the module functor to be a left adjoint.

\begin{defi}\label{def:tensor-over-T}
Let $T$ be a monad on $\mathcal E$, and let $(L,\alpha)$ be a right $T$-module with values in $\mathcal C$. For a $T$-algebra $(X,a:TX\to X)$, assume that the coequalizer
$$
\begin{tikzcd}
LTX \ar[r, shift left=.5ex, "La"]
\ar[r, shift right=.5ex, swap, "\alpha_X"]
&
LX \ar[r, "q_{(X,a)}"] &
L\otimes_T (X,a)
\end{tikzcd}
$$
exists in $\mathcal C$. We call $L\otimes_T(X,a)$ the tensor product of $L$ with $(X,a)$ over $T$.
\end{defi}

\begin{rem}\label{rem:what-is-needed-for-Phi}
The construction of $L\otimes_T(X,a)$ depends only on the monad $T$, the right $T$-module structure $\alpha:LT\Rightarrow L$, and the existence of the displayed coequalizer in $\mathcal C$. No adjunction is involved in the definition. The role of an adjunction appears later: when $L$ admits a right adjoint, the module structure also yields the comparison functor from global objects to $\Alg(T)$.
\end{rem}
Assume now that the coequalizers defining the tensor product exist for all $T$-algebras.

\begin{defi}\label{def:Phi-alpha}
Let $T$ be a monad on $\mathcal E$, and let $(L,\alpha)$ be a right $T$-module with values in $\mathcal C$. Suppose that, for every $T$-algebra $(X,a)$, the coequalizer
$$
\begin{tikzcd}
LTX \ar[r, shift left=.5ex, "La"]
\ar[r, shift right=.5ex, swap, "\alpha_X"]
&
LX \ar[r, "q_{(X,a)}"] &
L\otimes_T(X,a)
\end{tikzcd}
$$
exists in $\mathcal C$. The assignment
$$
(X,a)\longmapsto L\otimes_T(X,a)
$$
extends uniquely to a functor
$$
\Phi_\alpha:\Alg(T)\longrightarrow\mathcal C.
$$
We call $\Phi_\alpha$ the \emph{realization functor} associated with the right $T$-module $(L,\alpha)$.
\end{defi}

Indeed, if $f:(X,a)\to(Y,b)$ is a morphism of $T$-algebras, then $Lf:LX\to LY$ is compatible with the two defining pairs, and hence induces a unique morphism
$$
\Phi_\alpha(f):\Phi_\alpha(X,a)\to\Phi_\alpha(Y,b)
$$
satisfying
$$
\Phi_\alpha(f)q_{(X,a)}=q_{(Y,b)}Lf.
$$

We now assume that the right $T$-module is a left adjoint. We prove that the realization functor is then a left adjoint to the comparison functor.

\begin{theo}\label{thm:Phi-adj-K}
Let $T$ be a monad on $\mathcal E$, let $(L,\alpha)$ be a left adjoint right $T$-module with values in $\mathcal C$, and choose an adjunction
$
L:\mathcal E\rightleftarrows\mathcal C:R.
$
Assume that the coequalizers in Definition~\ref{def:Phi-alpha} exist. Then the comparison functor of Definition~\ref{def:comparison-functor-module} and the realization functor of Definition~\ref{def:Phi-alpha} form an adjunction
$$
\Phi_\alpha:\Alg(T)\rightleftarrows\mathcal C:K_\alpha,
$$
with $\Phi_\alpha\dashv K_\alpha$.
\end{theo}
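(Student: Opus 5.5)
We will establish the adjunction by exhibiting, for every $T$-algebra $(X,a)$ and every object $C$ of $\mathcal C$, a bijection
$$
\mathcal C\bigl(\Phi_\alpha(X,a),C\bigr)\;\iso\;\Alg(T)\bigl((X,a),K_\alpha C\bigr),
$$
natural in both variables. The plan is to pass through the hom-set $\mathcal C(LX,C)\iso\mathcal E(X,RC)$ of the chosen adjunction $L\dashv R$, with unit $\iota$ and counit $\varepsilon$. By the universal property of the coequalizer defining $\Phi_\alpha(X,a)=L\otimes_T(X,a)$, morphisms $\Phi_\alpha(X,a)\to C$ correspond bijectively, via precomposition with $q_{(X,a)}$, to morphisms $g\:LX\to C$ satisfying $g\circ La=g\circ\alpha_X$. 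So it suffices to show that, under the adjunction bijection $g\mapsto \bar g=Rg\circ\iota_X$, this equation is equivalent to $\bar g$ being a morphism of $T$-algebras $(X,a)\to(RC,\rho_C)$, i.e.\ to $\bar g\circ a=\rho_C\circ T\bar g$.

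This equivalence is the main step. We will transpose both sides of $\bar g\circ a=\rho_C\circ T\bar g$ across $L\dashv R$, getting maps $LTX\to C$. The transpose of $\bar g\circ a$ is $g\circ La$ by naturality of the adjunction bijection. For the other side, we will use the explicit formula for the mate, $\rho_C=R\varepsilon_C\circ R\alpha_{RC}\circ\iota_{TRC}$, from the proof of \prox\ref{prop:left-modules-right-modules-lifts}. Its transpose is $\varepsilon_C\circ\alpha_{RC}$. Hence the transpose of $\rho_C\circ T\bar g$ is $\varepsilon_C\circ\alpha_{RC}\circ LT\bar g$. Naturality of $\alpha$ turns this into $\varepsilon_C\circ L\bar g\circ\alpha_X$, and the triangle identity gives $\varepsilon_C\circ L\bar g=g$. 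So the transposed equation reads $g\circ\alpha_X=g\circ La$, exactly the coequalizing condition. Since transposition is a bijection, the two conditions are equivalent. We expect this short mate computation to be the only real obstacle. The points to be careful about are which component of $\alpha$ appears (namely $\alpha_{RC}$) and applying naturality of $\alpha$ at the morphism $\bar g\:X\to RC$.

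Finally, we will check naturality of the composite bijection. In $C$, naturality follows because $K_\alpha(u)=R(u)$ and the adjunction bijection is natural. In $(X,a)$, for an algebra morphism $f\:(X,a)\to(Y,b)$, naturality follows from the defining relation $\Phi_\alpha(f)\,q_{(X,a)}=q_{(Y,b)}\,Lf$ together with naturality of the adjunction bijection in $X$. Both checks are routine. We may alternatively present the result by describing the unit $(X,a)\to K_\alpha\Phi_\alpha(X,a)$ as the transpose of $q_{(X,a)}$, but the hom-set argument above already yields $\Phi_\alpha\dashv K_\alpha$.
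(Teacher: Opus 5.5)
Your proposal is correct and follows essentially the same route as the paper. Both use the coequalizer's universal property to reduce to maps $g\colon LX\to C$ with $g\circ La=g\circ\alpha_X$, then transpose the algebra-morphism condition $\bar g\circ a=\rho_C\circ T\bar g$ across $L\dashv R$ via $\varepsilon_C\circ L\rho_C=\varepsilon_C\circ\alpha_{RC}$ and naturality of $\alpha$ at $\bar g$; your explicit check of naturality in both variables is slightly more detailed than the paper, which simply asserts it.
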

\begin{proof}
Let $(X,a)$ be a $T$-algebra and let $C$ be an object of $\mathcal C$. By the universal property of the coequalizer defining $\Phi_\alpha(X,a)$, a morphism
$
\Phi_\alpha(X,a)\to C
$
is the same thing as a morphism
$
g:LX\to C
$
such that
$$
g\circ La=g\circ\alpha_X.
$$
Under the adjunction $L\dashv R$, the morphism $g$ corresponds to a unique morphism
$
\overline g:X\to RC,
$
with
$
g=\varepsilon_C\circ L\overline g.
$

Let $\rho:TR\Rightarrow R$ be the left $T$-module structure corresponding to $\alpha$. We claim that the equality
$
g\circ La=g\circ\alpha_X
$
is equivalent to saying that $\overline g$ is a morphism of $T$-algebras
$
(X,a)\longrightarrow (RC,\rho_C).
$
Indeed, the latter condition is
$$
\overline g\circ a=\rho_C\circ T\overline g.
$$
Transposing the left-hand side gives
$$
\varepsilon_C\circ L\overline g\circ La=
g\circ La.
$$
Transposing the right-hand side gives
$$
\varepsilon_C\circ L\rho_C\circ LT\overline g.
$$
Since $\rho$ is the mate of $\alpha$, we have
$
\varepsilon_C\circ L\rho_C=\varepsilon_C\circ\alpha_{RC}.
$
Using naturality of $\alpha$ with respect to $\overline g:X\to RC$, this becomes
$$
\varepsilon_C\circ\alpha_{RC}\circ LT\overline g= \varepsilon_C\circ L\overline g\circ\alpha_X
$$
Hence the algebra morphism condition for $\overline g$ transposes exactly to the coequalizing condition
$
g\circ La=g\circ\alpha_X.
$

Therefore morphisms
$
\Phi_\alpha(X,a)\to C
$
are naturally in bijection with morphisms of $T$-algebras
$$
(X,a)\longrightarrow (RC,\rho_C)=K_\alpha(C).
$$
Thus we have a natural bijection
$$
\mathcal C(\Phi_\alpha(X,a),C)
\cong
\Alg(T)\bigl((X,a),K_\alpha(C)\bigr),
$$
and this proves $\Phi_\alpha\dashv K_\alpha$.
\end{proof}

\begin{rem}
\label{rem:generalized-action-descent}
By Corollary~\ref{corollactmndisleftTmod}, every generalized action monad yields a left adjoint right $T$-module. Hence the preceding construction gives both a comparison functor $K_\alpha$ and, whenever the required coequalizers exist, a realization functor $\Phi_\alpha$, such that $\Phi_\alpha\dashv K_\alpha$. Thanks to the equivalence of categories proved in \thex\ref{theormainequivalence}, we then also obtain a corresponding formal notion of generalized descent associated to any internal category $\DD$.
\end{rem}

We finish this subsection by recording the fibrational generalization of the preceding
construction. This is the point at which the abstract module formalism meets
the bifibrational descent framework of
Section~\ref{sec:general-fibrational-setting}. In a bifibration, every map
\(u:X\to Y\) gives a base-change adjunction
\[
u_!:p^{-1}(X)\rightleftarrows p^{-1}(Y):u^\ast .
\]
Such adjunctions are the basic mechanism behind the classical
B\'{e}nabou--Roubaud comparison theorem and its fibrational refinements
\cite{BenabouRoubaud,JanelidzeTholenI,JanelidzeTholenII}. On the other hand,
the passage from a left module structure to a lift through an
Eilenberg--Moore object is a formal consequence of Street's theory of monads
\cite{StreetFormalTheory}, in the form recalled in
\cite[Section~3.1]{NunesSemanticFactorization}. The following proposition is
therefore the fibrewise instance of the preceding abstract construction: a
monad on a fibre, together with a compatible module structure along a direct
image functor, determines a comparison functor into its category of algebras,
and, when the required coequalizers exist, a left adjoint realization functor.
\begin{cor}
\label{prop:fibrewise-comparison-realization}
Let $p\:\E\to\C$ be a bifibration, and let $u\:X\to Y$ be a morphism in $\C$. The bifibration gives an adjunction
$u_!\:p^{-1}(X)\rightleftarrows p^{-1}(Y)\:u\st$. Let $T=(T,\eta,\mu)$ be a monad on $p^{-1}(X)$, and suppose that $u_!$ carries a right $T$-module structure $\alpha\:u_!T\Rightarrow u_!$. Let $\rho\:Tu\st\Rightarrow u\st$ be the mate of $\alpha$ under $u_!\dashv u\st$. Then there is a comparison functor
$K^p_{u,\alpha}\:p^{-1}(Y)\to\Alg(T)$ given by
$K^p_{u,\alpha}(A)=(u\st A,\rho_A)$ and
$K^p_{u,\alpha}(f)=u\st f$.

Assume moreover that, for every $T$-algebra $(Z,a)$, the coequalizer
$$
u_!TZ
\overset{u_!a}{\underset{\alpha_Z}{\rightrightarrows}}
u_!Z
\longrightarrow
u_!\otimes_T(Z,a)
$$
exists in $p^{-1}(Y)$. Then these coequalizers define a realization functor
$\Phi^p_{u,\alpha}\:\Alg(T)\to p^{-1}(Y)$ by
$\Phi^p_{u,\alpha}(Z,a)=u_!\otimes_T(Z,a)$, and one has an adjunction
$\Phi^p_{u,\alpha}\dashv K^p_{u,\alpha}$.
\end{cor}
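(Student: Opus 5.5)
This corollary is a direct specialisation of the abstract results of this subsection, with $\mathcal E=p^{-1}(X)$, $\mathcal C=p^{-1}(Y)$, $L=u_!$ and $R=u\st$. The plan is to check that the hypotheses of Definition~\ref{def:comparison-functor-module}, Definition~\ref{def:Phi-alpha} and Theorem~\ref{thm:Phi-adj-K} hold in this setting, and then to read off the formulas.

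\textbf{Step 1: the left adjoint right module.} Since $p$ is a bifibration, the direct image $u_!$ is left adjoint to the change of base $u\st$. We fix this adjunction, with unit and counit coming from the universal properties of cartesian and cocartesian liftings. Hence $(u_!,\alpha)$ is a left adjoint right $T$-module with values in $p^{-1}(Y)$, in the sense of Definition~\ref{def:left-right-T-modules}.

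\textbf{Step 2: the comparison functor.} By Proposition~\ref{prop:left-modules-right-modules-lifts}, the mate $\rho\colon Tu\st\Rightarrow u\st$ of $\alpha$, given explicitly by
\[
\rho_A = u\st(\varepsilon_A)\circ u\st(\alpha_{u\st A})\circ \iota_{Tu\st A},
\]
is a left $T$-module structure on $u\st$. Equivalently, it is a lift of $u\st$ through $U_T$. Definition~\ref{def:comparison-functor-module} then produces $K^p_{u,\alpha}=K_\alpha$, with
\[
K_\alpha(A)=(u\st A,\rho_A), \qquad K_\alpha(f)=u\st f,
\]
which are exactly the stated formulas. The fact that $u\st f$ is a morphism of algebras is just naturality of $\rho$.

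\textbf{Step 3: realization and adjunction.} Under the coequalizer hypothesis, Definition~\ref{def:Phi-alpha} gives the functor $\Phi^p_{u,\alpha}=\Phi_\alpha$, with value $u_!\otimes_T(Z,a)$ on objects. Its action on a morphism $f$ is induced by $u_! f$ via the universal property of the coequalizer. Theorem~\ref{thm:Phi-adj-K}, applied verbatim, then yields $\Phi_\alpha\dashv K_\alpha$.

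\textbf{Main point to check.} There is no real obstacle. The one thing to verify is that the coequalizers are formed in the fibre $p^{-1}(Y)$, and that all maps involved ($u_!a$, $\alpha_Z$, $\varepsilon$, and the transposes) are vertical. This holds because $u_!\dashv u\st$ is an adjunction between fibres and $\alpha$ is a natural transformation between functors into the fibre. The proof of Theorem~\ref{thm:Phi-adj-K} uses only the adjunction and the coequalizer property inside $\mathcal C=p^{-1}(Y)$, so it applies without change.
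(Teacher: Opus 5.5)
Your proposal is correct and matches the paper's proof: both instantiate Theorem~\ref{thm:Phi-adj-K}, together with Definitions~\ref{def:comparison-functor-module} and~\ref{def:Phi-alpha}, with $\mathcal E=p^{-1}(X)$, $\mathcal C=p^{-1}(Y)$, $L=u_!$ and $R=u\st$, and both use that the mate $\rho$ is a left $T$-module, so that naturality of $\rho$ makes $u\st f$ an algebra morphism. Your extra check that all maps are vertical and that the coequalizers are taken in the fibre is a harmless sanity check which the paper leaves implicit.
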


\begin{proof}
This is Theorem~\ref{thm:Phi-adj-K} applied with the abstract categories
$\mathcal E$ and $\mathcal C$ replaced by the fibres $p^{-1}(X)$ and
$p^{-1}(Y)$, and with $L=u_!$ and $R=u\st$. Since
$\alpha\:u_!T\Rightarrow u_!$ is a right $T$-module structure, its mate
$\rho\:Tu\st\Rightarrow u\st$ is a left $T$-module structure. Hence each
$u\st A$ carries the $T$-algebra structure $\rho_A$, and naturality of
$\rho$ makes $u\st f$ a morphism of $T$-algebras. This gives the comparison
functor $K^p_{u,\alpha}$.

For a $T$-algebra $(Z,a)$, the realization is obtained by tensoring the right
$T$-module $u_!$ with $(Z,a)$, that is, by coequalizing the two maps
$u_!TZ\rightrightarrows u_!Z$ induced by $a\:TZ\to Z$ and
$\alpha_Z\:u_!TZ\to u_!Z$. The assumed coequalizers therefore define
$\Phi^p_{u,\alpha}$, and the adjunction
$\Phi^p_{u,\alpha}\dashv K^p_{u,\alpha}$ is exactly the adjunction of
Theorem~\ref{thm:Phi-adj-K} in this fibrewise situation.
\end{proof}

\begin{rem}\label{remappltogenactmndfibr}
    The above corollary can be applied to the situation of $u$ being the morphism $q\:X\to 1$ where $1$ is the terminal object in $\C$.
    
    By the theory developed in Section \ref{sec:general-fibrational-setting}, given a bifibration $p$ satisfying the Beck--Chevalley condition, every internal category $\DD$ in $\C$ yields a monad $\T^p_{\DD}$ on $p^{-1}(D_0)$ relative to $p$. Furthermore, the monad $\T^p_{\DD}$ is equipped with a natural transformation $\alpha^p\:q_!\c \T^p_{\DD}\to q_!$ that makes $(q_!,\alpha^p)$ into a comorphism of monads $\T^p_{\DD}\to \Id{p^{-1}(1)}$, or equivalently into a left adjoint right $\T^p_{\DD}$-module.

    The theory presented above on left adjoint right $T$-modules can then be applied to the fibrational context as well, yielding a generalized notion of descent, relative to a bifibration $p$, associated to any internal category $\DD$.
\end{rem}

\subsection{Recovering classical descent along a morphism}\label{subsec:recovclassicaldescent}

Consider for simplicity the codomain bifibration $\cod\:\C^2\to \C$. Everything in this subsection can also be done more in general, with respect to any bifibration satisfying the Beck--Chevalley condition.

We can recover the classical descent along a morphism $p\:E\to B$ by considering the \vvv{C}ech internal groupoid
$$
\Eq(p)=\bigl(E\times_B E\rightrightarrows E\bigr)
$$
associated to the morphism $p$. The object $E\times_B E$ is the kernel pair associated to $p$, and source and target of the internal groupoid are given by $\pi_2$ and $\pi_1$ respectively. The theory we developed in section \ref{sec:general-fibrational-setting} provides us with an associated generalized action monad
$$
T_p=\T_{\Eq(p)}=\pi_{1!}\pi_2^\ast
$$
Notice that this monad exactly coincides with the one of B\'{e}nabou--Roubaud recalled in Section \ref{secprelim}. The key element behind the fact that the two monads coincide is the Beck--Chevalley condition.

Therefore, the algebras for the generalized action monad $T_p$ are precisely the classical descent data:
$$
\Desc(p)=\Alg(T_p)\simeq \Act[\C]{\Eq(p)}.
$$
It is then straightforward to show that also the comparison functor associated to the left adjoint right $T_p$-module corresponding to the \vvv{C}ech internal groupoid perfectly coincides with the classical comparison functor
$$
K_p\:\slice{\C}{B}\longrightarrow \Desc(p).
$$
Similarly, assuming that the involved coequalizers exist, also the realization functor provided by our theory coincides with the classical realization functor. As a consequence, the induced notions of descent and effective descent match the classical ones.

This has been the first motivating example for the action-monad formalism developed in this paper.

\subsection{Recovering Galois descent}\label{subsec:recovGaloisdesc}

The guiding idea is that Galois descent has the same form as classical descent along a morphism, where the compatibility of the local form is captured by the symmetries of $E$ over $B$. So we can use the symmetries of $E$ over $B$ to construct an internal category, encoding the compatibilities we want, that replaces the \vvv{C}ech groupoid of classical descent along a morphism.

Let $G$ be an internal group over $B$, and suppose that $G$ acts on $p\:E\to B$ by a morphism $h\:G\x[B] E\to E$ over $B$. Then, as recalled in Subsection \ref{subsec:Galoisrecall}, this action gives rise to an action groupoid 
$$
G\ltimes E=\bigl(G\times_B E\rightrightarrows E\bigr),
$$
whose source is the projection $\pi_2:G\times_B E\to E$ and whose target is
$h$. The theory developed in this paper, relative to the codomain bifibration, then yields corresponding generalized action monad
$$
T_G=\T_{G\ltimes E}=h_!\pi_2^\ast
$$
on $\slice{\C}{E}$. For an object $\gamma:X\to E$, this can be written more concretely as
$$
T_G(X)=G\times_B X,
$$
viewed as an object over $E$ by the composite
$$
G\times_B X
\xrightarrow{\id{G}\times \gamma}
G\times_B E
\xrightarrow{h}
E.
$$
The unit of the monad is induced by the identity element of $G$, and the
multiplication is induced by multiplication in $G$. The Eilenberg--Moore algebras of such a monad are precisely the actions of the internal groupoid $G\ltimes E$, and then by construction, they are precisely the Galois descent data:
$$
\Desc_G(E)=\Act[\C]{G\ltimes E}\simeq\Alg(T_G).
$$
It is then straightforward to show that also the comparison functor associated to the left adjoint right $T_G$-module corresponding to the action groupoid $G\ltimes E$ perfectly coincides with the Galois descent comparison functor
$$
K_G\:\slice{\C}{B}\longrightarrow \Desc_G(E).
$$
Similarly, assuming that the involved coequalizers exist, also the realization functor provided by our theory coincides with the realization functor described in \cite{MarquesGaloisDescent}. As a consequence, the induced notions of descent and effective descent match those of Galois descent. 

\begin{remark}
    The notion of generalized descent associated to left adjoint right modules over a monad captures and generalizes both classical descent along a morphism and Galois descent. The advantage of this theory, as opposed to the monadic descent theory, for example, is that it enjoys better properties, closer to the ones satisfied by classical descent and Galois descent. In particular, we have a realization functor constructed using coequalizers in a similar way as for the one of classical descent along a morphism.

    Our theory can also be embedded inside the general framework of lax descent objects, brought forward by Lucatelli Nunes in \cite{NunesDescentKan}. Quite similarly to what we presented in this paper, such a theory starts with an internal precategory. Then it considers its image along a pseudofunctor into $\Cat$ and calculates the 2-dimensional (lax) limit of the obtained diagram in $\Cat$. The category of associated descent data is precisely given by this 2-dimensional limit. As explained in \cite{NunesDescentKan}, when considering the pseudofunctor into $\Cat$ that takes slices of the domain category, the obtained category of descent data coincides with the category of actions of the starting internal category. But the theory presented here enjoys better properties. In fact, we have monadicity of the category of descent data, exhibited by \thex\ref{thm:action-monad-algebras}, while the general framework of \cite{NunesDescentKan} lacks monadicity. It is proved there that a forgetful functor from the lax descent category creates absolute limits and colimits, but this is weaker than monadicity as the existence of a left adjoint is not guaranteed.
\end{remark}

\subsection{A monadic effective descent theorem}\label{subsecmonadiceffdesctheor}

In the following, we generalize the rigidity criterion recalled in subsection \ref{subsec:Galoisrecall} for Galois effective descent. More precisely, we will achieve a result of reduction of effective descent associated to a left adjoint right module over a monad to classical effective descent. Note that descent, as opposed to effective descent, cannot be similarly reduced. We first explore a rigidity criterion from a monadic point of view. Then, we will apply this to the generalized action monads associated to internal categories, thanks to \thex\ref{theormainequivalence}, to obtain a rigidity criterion for internal categories.

To achieve our goal, we relate the abstract comparison functor associated with a left adjoint right \(T\)-module to the classical comparison functor of an adjunction. The
preceding construction starts from a monad \(T\) on \(\mathcal E\), an
adjunction
\(
L:\mathcal E\rightleftarrows\mathcal C:R,
\)
and a right \(T\)-module structure
\(
\alpha:LT\Rightarrow L.
\)
By Proposition~\ref{prop:left-modules-right-modules-lifts}, when \(L\dashv R\)
such a structure is equivalently given by a comorphism of monads
\(
\theta:T\to RL
\) over $\mathcal{\E}$. Thus, in the adjunction case, the abstract comparison functor \(K_\alpha\)
can be compared directly with the usual comparison functor of the adjunction
\(L\dashv R\).

More precisely, let
\(
L:\mathcal E\rightleftarrows\mathcal C:R
\)
be an adjunction, with unit
\(
\iota:\Id{\mathcal E}\Rightarrow RL
\)
and counit
\(
\varepsilon:LR\Rightarrow\Id{\mathcal C}.
\)
Denote
\(
S=RL,
\)
and regard \(S\) as the monad on \(\mathcal E\) whose unit is \(\iota\) and
whose multiplication is
\(
R\varepsilon L:RLRL\Rightarrow RL.
\)

The adjunction gives a canonical right \(S\)-module structure on \(L\): precisely, \(
\varepsilon L:LS=LRL\Rightarrow L.
\)
The corresponding left \(S\)-module structure on \(R\) is
\(
R\varepsilon :SR=RLR
\xrightarrow{}
R.
\)
Therefore the associated comparison functor is
\[
K_{\varepsilon L}:\mathcal C\to\Alg(S),
\qquad
K_{\varepsilon L}(C)=\bigl(RC,R\varepsilon_C\bigr),
\]
which is the usual comparison functor of the adjunction \(L\dashv R\).

Now let \(T=(T,\eta,\mu)\) be any monad on \(\mathcal E\), and let
\[
\theta:T\to S=RL
\]
be a comorphism of monads over $\mathcal{E}$. By
Proposition~\ref{prop:left-modules-right-modules-lifts}, this morphism
corresponds to the right \(T\)-module structure
\(
\alpha:
LT
\xrightarrow{\;L\theta\;}
LS
\xrightarrow{\;\varepsilon L\;}
L.
\)
Then $\theta$ induces a functor
\[
\Alg(\Id{\mathcal E},\theta):\Alg(S)\to\Alg(T)
\]
of restriction of algebra structures along \(\theta\). Explicitly,
\[
\Alg(\Id{\mathcal E},\theta)(A,b)=\bigl(A,b\circ\theta_A\bigr).
\]
Let
\(
K_\alpha:\mathcal C\to\Alg(T)
\)
be the comparison functor associated with \(\alpha\) and $R$. The next proposition
says that \(K_\alpha\) is obtained from the usual comparison functor
\(K_{\varepsilon L}\) by restriction of scalars along \(\theta\).
\begin{prop}
\label{prop:comparison-restriction-canonical}
With the notation above, one has
\[
K_\alpha=\Alg(\Id{\mathcal E},\theta) K_{\varepsilon L}.
\]
\end{prop}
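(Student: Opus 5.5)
The plan is to compare the two functors directly on objects and on morphisms. Both are lifts of $R$ through the forgetful functors, so their underlying data already agree; the only real content is an equality of algebra structures.

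\emph{Morphisms and underlying objects.} The restriction functor $\Alg(\Id{\mathcal E},\theta)$ leaves underlying objects and morphisms unchanged. Moreover, $K_{\varepsilon L}(u)=R(u)=K_\alpha(u)$ by Definition~\ref{def:comparison-functor-module}. Hence both sides send $C$ to an algebra with underlying object $RC$, and send $u$ to $R(u)$. By Remark~\ref{rem:triangle-UKR}, both sides lift $R$ through $U_T$. Since $U_T$ is faithful, it therefore suffices to show that the two algebra structures on $RC$ coincide, namely
\[
\rho_C \;=\; R\varepsilon_C\circ\theta_{RC},
\]
where $\rho$ is the mate of $\alpha$ and $\alpha=\varepsilon L\circ L\theta$.

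\emph{The algebra structures.} I would expand the mate formula from Proposition~\ref{prop:left-modules-right-modules-lifts}:
\[
\rho_C=R\varepsilon_C\circ R\alpha_{RC}\circ\iota_{TRC}
= R\varepsilon_C\circ R\varepsilon_{LRC}\circ RL\theta_{RC}\circ\iota_{TRC}.
\]
Naturality of $\iota$ applied to $\theta_{RC}\colon TRC\to RLRC$ gives
\[
RL\theta_{RC}\circ\iota_{TRC}=\iota_{RLRC}\circ\theta_{RC}.
\]
The triangle identity gives $R\varepsilon_{LRC}\circ\iota_{RLRC}=\id{RLRC}$. Substituting both, the expression collapses to
\[
\rho_C=R\varepsilon_C\circ\theta_{RC},
\]
as required.

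\emph{Expected obstacle.} The only step needing care is keeping the mate conventions straight. One has to check that the $\alpha$ built from $\theta$ via Proposition~\ref{prop:left-modules-right-modules-lifts}, and then transported to $\rho$, is exactly the $\rho$ used in Definition~\ref{def:comparison-functor-module}. The substitution order above handles this. Once the components agree, naturality is automatic, and the equality of functors follows.
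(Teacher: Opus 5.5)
Your proof is correct and is essentially the paper's argument. The paper writes $K_\alpha=\Alg(R,\rho)$ and factors the monad morphism $(R,\rho)$ as $(\Id{\mathcal E},\theta)\circ(R,R\varepsilon)$ via functoriality of $\Alg(-)$, which rests on exactly the identity $\rho=R\varepsilon\circ\theta_R$; the paper asserts that identity without computation, whereas you verify it componentwise using naturality of $\iota$ and the triangle identity, and then compare the two functors directly on objects and morphisms.
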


\begin{proof}
By Remark~\ref{rem:comparison-functor-as-em-functor} and functoriality of
\(\Alg(-)\) with respect to comorphisms of monads,
\[
K_\alpha
=
\Alg(R,\rho)
=
\Alg(\Id{\mathcal E},\theta)\,\Alg(R,R\varepsilon)
=
\Alg(\Id{\mathcal E},\theta)\,K_{\varepsilon L},
\]
where
\(
\rho
=
TR\xrightarrow{\;\theta_R\;}SR\xrightarrow{\;R\varepsilon\;}R
\)
is the left \(T\)-module corresponding to
\(
\alpha=(\varepsilon L)\circ L\theta.
\)
\end{proof}

Assume now that the coequalizers required to define the realization functor
of Theorem~\ref{thm:Phi-adj-K} exist for the right \(T\)-module
\(\alpha:LT\Rightarrow L\). Thus, for every \(T\)-algebra \((Z,a)\), we write
\(
q_{(Z,a)}:LZ\longrightarrow \Phi_\alpha(Z,a)
\)
for the coequalizer of
\(
LTZ
\overset{La}{\underset{\alpha_Z}{\rightrightarrows}}
LZ.
\)

Let
\(
F:\mathcal E\to\Alg(T)
\)
denote the free \(T\)-algebra functor, so that
\[
F X=(TX,\mu_X)
\qquad\text{and}\qquad
F f=Tf.
\]
We shall use the following consequence of the module axioms.

\begin{lem}
\label{lem:realization-free-algebra}
For every object \(X\in\mathcal E\), there is a unique isomorphism
\(
\tau_X:\Phi_\alpha F X\longrightarrow LX
\)
such that
\[
\tau_X q_{F X}=\alpha_X.
\]
Moreover, \(\tau_X\) is natural in \(X\).
\end{lem}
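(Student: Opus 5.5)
We will show directly that \(\alpha_X\colon LTX\to LX\) is a coequalizer of the parallel pair
\[
LT^2X \overset{L\mu_X}{\underset{\alpha_{TX}}{\rightrightarrows}} LTX,
\]
which is exactly the pair whose coequalizer defines \(\Phi_\alpha FX\), since \(FX=(TX,\mu_X)\). Once this is established, \(\tau_X\) is the unique comparison isomorphism between two coequalizers of the same pair. Since \(q_{FX}\) is an epimorphism, it is the unique morphism with \(\tau_X q_{FX}=\alpha_X\). We will also check that this identification does not depend on the existence assumption: if \(\alpha_X\) coequalizes the pair, the chosen coequalizer \(q_{FX}\) is uniquely isomorphic to it.

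The key step is to exhibit a split (absolute) coequalizer. First, \(\alpha_X\) coequalizes the pair: this is the module axiom \(\alpha L\mu=\alpha\alpha_T\), evaluated at \(X\), i.e. \(\alpha_X\circ L\mu_X=\alpha_X\circ\alpha_{TX}\). For the splittings we take
\[
\sigma=L\eta_X\colon LX\to LTX,
\qquad
\kappa=L\eta_{TX}\colon LTX\to LT^2X.
\]
We then verify the three split coequalizer identities. First, \(\alpha_X\sigma=\alpha_X\circ L\eta_X=\id{LX}\), by the unit module axiom. Second, \(L\mu_X\circ\kappa=L(\mu_X\eta_{TX})=\id{LTX}\), by the monad unit law. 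Third,
\[
\alpha_{TX}\circ\kappa=\alpha_{TX}\circ L\eta_{TX}=\id{LTX}
\]
would be the wrong identity. The identity actually needed is \(\alpha_{TX}\kappa=\sigma\alpha_X\), and here the choice of \(\kappa\) has to be adjusted. We will use \(\kappa=LT\eta_X\) instead. Then \(L\mu_X\circ LT\eta_X=\id{}\) by the other monad unit law, and by naturality of \(\alpha\) applied to \(\eta_X\colon X\to TX\),
\[
\alpha_{TX}\circ LT\eta_X=L\eta_X\circ\alpha_X=\sigma\alpha_X.
\]
So the pair, taken with \(L\mu_X\) as the map split by \(\kappa\), together with \(\sigma\), forms a split fork, and split forks are coequalizers.

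Finally, naturality in \(X\) will be checked using the description of \(\Phi_\alpha\) on morphisms. For \(f\colon X\to Y\), the morphism \(\Phi_\alpha(Tf)\) is characterized by \(\Phi_\alpha(Tf)\,q_{FX}=q_{FY}\,LTf\). Hence
\[
\tau_Y\,\Phi_\alpha(Tf)\,q_{FX}=\tau_Y q_{FY}\,LTf=\alpha_Y\,LTf=Lf\,\alpha_X=Lf\,\tau_X q_{FX},
\]
using naturality of \(\alpha\). Since \(q_{FX}\) is epic, this gives \(\tau_Y\Phi_\alpha(Tf)=Lf\,\tau_X\). The main obstacle is only the bookkeeping in the split fork, namely choosing \(\kappa=LT\eta_X\) rather than \(L\eta_{TX}\) so that both the monad unit law and the naturality of \(\alpha\) apply.
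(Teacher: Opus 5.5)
Your proof is correct and matches the paper's argument: the same split fork $L\mu_X,\alpha_{TX}\colon LT^2X\rightrightarrows LTX$ with splittings $L\eta_X$ and $LT\eta_X$, using the module unit axiom, the monad unit law and naturality of $\alpha$ at $\eta_X$, and then naturality of $\tau$ by cancelling the epimorphism $q_{FX}$, which you spell out more explicitly than the paper does. In a final write-up, drop the abandoned choice $\kappa=L\eta_{TX}$ and state directly $\kappa=LT\eta_X$.
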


\begin{proof} 
Since \(F X=(TX,\mu_X)\), the object \(\Phi_\alpha  X\) is defined by the coequalizer 
\[ \begin{tikzcd} LT^2X \ar[r, shift left=.5ex, "L\mu_X"] \ar[r, shift right=.5ex, swap, "\alpha_{TX}"] & LTX \ar[r, "q_{ X}"] & \Phi_\alpha  X . \end{tikzcd} \] 
We show that \(\alpha_X:LTX\to LX\) is also a coequalizer of this pair. First, the multiplication axiom for the right \(T\)-module \(\alpha\) gives \[ \alpha_X\circ L\mu_X = \alpha_X\circ \alpha_{TX}, \] so \(\alpha_X\) coequalizes the two arrows. Moreover this coequalizer is split. Indeed, set \[ s=L\eta_X:LX\to LTX, \qquad t=LT\eta_X:LTX\to LT^2X. \] Then \[ \alpha_Xs=\Id{LX}, \qquad L\mu_Xt=\Id{LTX}, \] by the unit axiom for the module and the unit axiom for the monad. Finally, naturality of \(\alpha\) with respect to \(\eta_X:X\to TX\) gives \[ \alpha_{TX}t = s\alpha_X. \] Thus \(\alpha_X\) is a split coequalizer of \[ L\mu_X,\alpha_{TX}:LT^2X\rightrightarrows LTX. \] 
In particular, it has the same universal property as \(q_{ X}\). Hence there is a unique isomorphism \( \tau_X:\Phi_\alpha  X\longrightarrow LX \) satisfying \( \tau_Xq_{ X}=\alpha_X. \) 

Naturality in \(X\) follows from the uniqueness of \(\tau_X\), since for every \(f:X\to Y\) both composites \( \Phi_\alpha  X \rightrightarrows LY \) obtained through \(\Phi_\alpha  f\) and through \(Lf\) have the same composite with \(q_{ X}\). 
\end{proof}

Let
\(
\bar\eta:\Id{\Alg(T)}\Rightarrow K_\alpha\Phi_\alpha
\)
be the unit of the adjunction
\(
\Phi_\alpha\dashv K_\alpha,
\)
and let
\(
U:\Alg(T)\to\mathcal E
\)
denote the forgetful functor. In the canonical situation above, where
\[
S=RL
\qquad\text{and}\qquad
\alpha=(\varepsilon L)\circ L\theta,
\]
the comorphism
\(
\theta:T\to S
\)
can be recovered from the unit \(\bar\eta\) on free \(T\)-algebras.

\begin{lem}
\label{lem:theta-from-unit}
For every \(X\in\mathcal E\), the composite
\[
TX
\xrightarrow{\;U\bar\eta_{F X}\;}
R\Phi_\alpha F X
\xrightarrow{\;R\tau_X\;}
SX
\]
is \(\theta_X\).
\end{lem}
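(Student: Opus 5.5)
The plan is to make the unit $\bar\eta$ of $\Phi_\alpha\dashv K_\alpha$ explicit on free algebras, using the bijection in the proof of \thex\ref{thm:Phi-adj-K}. There, a morphism $g\colon\Phi_\alpha(Z,a)\to C$ corresponds to the transpose, under $L\dashv R$, of $g\circ q_{(Z,a)}\colon LZ\to C$. The unit $\bar\eta_{(Z,a)}$ is the transpose of the identity of $\Phi_\alpha(Z,a)$. Hence
$$
U\bar\eta_{(Z,a)}=R(q_{(Z,a)})\circ\iota_Z\colon Z\longrightarrow R\Phi_\alpha(Z,a).
$$
I would record this formula first, checking only that the bijection in that proof is indeed given by transposition of $g\circ q$.

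Next I would specialise to $(Z,a)=FX=(TX,\mu_X)$. Then
$$
R\tau_X\circ U\bar\eta_{FX}=R(\tau_X)\circ R(q_{FX})\circ\iota_{TX}=R(\alpha_X)\circ\iota_{TX},
$$
using the defining equation $\tau_Xq_{FX}=\alpha_X$ of \lemx\ref{lem:realization-free-algebra}.

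It remains to identify $R(\alpha_X)\circ\iota_{TX}$ with $\theta_X$. This is exactly the correspondence formula in the proof of \prox\ref{prop:left-modules-right-modules-lifts}, which recovers $\theta$ from $\alpha$. To make the argument self-contained, I would verify it directly. Substituting $\alpha_X=\varepsilon_{LX}\circ L\theta_X$ gives
$$
R\varepsilon_{LX}\circ RL\theta_X\circ\iota_{TX}.
$$
Naturality of $\iota$ along $\theta_X\colon TX\to RLX$ rewrites this as $R\varepsilon_{LX}\circ\iota_{RLX}\circ\theta_X$. The triangle identity $R\varepsilon\circ\iota R=\id{R}$, applied at $LX$, then gives $\theta_X$.

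The only delicate point is the first step: pinning down the unit of the adjunction from the hom-set bijection of \thex\ref{thm:Phi-adj-K}, rather than from some other presentation. Everything after that is naturality together with a triangle identity.
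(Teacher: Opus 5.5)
Your proof is correct and follows essentially the paper's argument: both rest on $U\bar\eta_{FX}$ being the $L\dashv R$-transpose of $q_{FX}$, and on $\tau_X q_{FX}=\alpha_X=\varepsilon_{LX}\circ L\theta_X$. The paper concludes by noting that both maps $TX\to RLX$ have transpose $\alpha_X$; you instead write the inverse transposition explicitly as $R(-)\circ\iota$ and finish with naturality of $\iota$ and the triangle identity, which is the same bijectivity argument unpacked.
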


\begin{proof}
We use the ordinary transposition under the adjunction \(L\dashv R\): a map
\(
h:A\to RB
\)
corresponds to the map
\[
LA
\xrightarrow{\;Lh\;}
LRB
\xrightarrow{\;\varepsilon_B\;}
B.
\]
We shall show that the two maps \(TX\to SX
\) have the same transpose.

By the construction of the adjunction
\(
\Phi_\alpha\dashv K_\alpha
\)
in Theorem~\ref{thm:Phi-adj-K}, the underlying map of the unit at the free
\(T\)-algebra \(F X\),
\(
U\bar\eta_{F X}:TX\to R\Phi_\alpha F X,
\)
is transpose to the coequalizer map
\(
q_{F X}:LTX\to \Phi_\alpha F X.
\)
Therefore the composite
\[
TX
\xrightarrow{\;U\bar\eta_{F X}\;}
R\Phi_\alpha F X
\xrightarrow{\;R\tau_X\;}
RLX
\]
is transpose to
\[
LTX
\xrightarrow{\;q_{F X}\;}
\Phi_\alpha F X
\xrightarrow{\;\tau_X\;}
LX.
\]
By Lemma~\ref{lem:realization-free-algebra}, this last composite is
\(
\tau_Xq_{F X}=\alpha_X.
\)

On the other hand, the transpose of
\(
\theta_X:TX\to SX
\)
is
\[
LTX
\xrightarrow{\;L\theta_X\;}
LSX
\xrightarrow{\;\varepsilon_{LX}\;}
LX.
\]
Since
\(
\alpha=(\varepsilon L)\circ L\theta,
\)
this transpose is also \(\alpha_X\).

Hence the two maps
\(
TX\to RLX
\)
have the same transpose under \(L\dashv R\). Since transposition is a bijection,
they are equal. Thus
\[
R\tau_X\circ U\bar\eta_{F X}
=
\theta_X. \qedhere
\]
\end{proof}

We can now prove that effective descent for a left adjoint right $T$-module can be reduced to the usual monadic effective descent. Interestingly, this rigidity only applies to effective descent and not to descent.

\begin{theo}\label{thm:monadic-descent-rigidity}
Let
$
L:\mathcal E\rightleftarrows\mathcal C:R
$
be an adjunction, and let $RL$ be its induced monad on $\mathcal E$. Let $T$ be a monad on $\mathcal E$, equipped with a comorphism of monads
$
\theta:T\to RL
$ over $\mathcal{E}$. Let
$$
\alpha:
LT\xrightarrow{L\theta}LRL\xrightarrow{\varepsilon L}L
$$
be the induced right $T$-module structure on $L$. Assume that the coequalizers defining $\Phi_\alpha$ exist. Then the following are equivalent:
\begin{enumerate}
\item the comparison functor
$
K_\alpha:\mathcal C\to\Alg(T)
$
is an equivalence;
\item the usual comparison functor
$
K_{{\varepsilon L}}:\mathcal C\to\Alg(RL)
$
of the adjunction $L\dashv R$ is an equivalence, and
$
\theta:T\to RL
$
is an isomorphism of monads.
\end{enumerate}
\end{theo}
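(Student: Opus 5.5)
The plan is to reduce both directions to \prox\ref{prop:comparison-restriction-canonical}, which gives the factorization
\[
K_\alpha=\Alg(\Id{\mathcal E},\theta)\,K_{\varepsilon L},
\]
together with \lemx\ref{lem:theta-from-unit}, which recovers $\theta$ from the unit of $\Phi_\alpha\dashv K_\alpha$.

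For (2)$\Rightarrow$(1), assume $\theta$ is an isomorphism of monads. Its inverse $\theta^{-1}:RL\to T$ is again a comorphism of monads over $\mathcal E$, because the unit and multiplication axioms are equations that can be inverted componentwise. Functoriality of restriction of scalars then shows that $\Alg(\Id{\mathcal E},\theta)$ and $\Alg(\Id{\mathcal E},\theta^{-1})$ are mutually inverse, so $\Alg(\Id{\mathcal E},\theta)$ is an isomorphism of categories; concretely, $(A,b)\mapsto(A,b\theta_A)$ is inverted by $(A,c)\mapsto(A,c\theta_A^{-1})$. Since $K_{\varepsilon L}$ is an equivalence by hypothesis, the factorization shows that $K_\alpha$ is an equivalence.

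For (1)$\Rightarrow$(2), assume $K_\alpha$ is an equivalence. By \thex\ref{thm:Phi-adj-K}, $\Phi_\alpha$ is a left adjoint of $K_\alpha$. A left adjoint of an equivalence is a quasi-inverse, and the adjunction can be taken to be an adjoint equivalence, so the unit $\bar\eta:\Id{\Alg(T)}\Rightarrow K_\alpha\Phi_\alpha$ is a natural isomorphism. (Directly: $K_\alpha$ is fully faithful, so the counit is invertible, and then the triangle identities together with the essential surjectivity of $K_\alpha$ make the unit invertible.)

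\lemx\ref{lem:theta-from-unit} gives $\theta_X=R\tau_X\circ U\bar\eta_{FX}$. Here $\tau_X$ is invertible by \lemx\ref{lem:realization-free-algebra} and $U\bar\eta_{FX}$ is invertible because $\bar\eta$ is, so every component $\theta_X$ is an isomorphism. A comorphism of monads with invertible components is an isomorphism of monads, by the same remark as in the first direction. Hence $\Alg(\Id{\mathcal E},\theta)$ is an isomorphism of categories, and $K_{\varepsilon L}=\Alg(\Id{\mathcal E},\theta)^{-1}K_\alpha$ is an equivalence.

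The only step that needs care is the invertibility of the unit $\bar\eta$. It must come from the uniqueness of adjoints, namely that the specific adjunction $\Phi_\alpha\dashv K_\alpha$ built from coequalizers has invertible unit whenever $K_\alpha$ is an equivalence; this is standard, and everything else is formal.
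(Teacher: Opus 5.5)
Your proposal is correct and follows the paper's proof essentially step for step. The unit of $\Phi_\alpha\dashv K_\alpha$ is invertible once $K_\alpha$ is an equivalence, and together with Lemmas~\ref{lem:realization-free-algebra} and~\ref{lem:theta-from-unit} this forces $\theta$ to be invertible; both directions then follow from the factorization $K_\alpha=\Alg(\Id{\mathcal E},\theta)K_{\varepsilon L}$ of Proposition~\ref{prop:comparison-restriction-canonical}. Your extra justifications fill in details the paper leaves implicit: why that unit is invertible, and why $\theta^{-1}$ is again a comorphism, so that restriction along $\theta$ is an isomorphism of categories.
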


\begin{proof}
Suppose first that $K_\alpha$ is an equivalence. Since
$
\Phi_\alpha\dashv K_\alpha,
$
the left adjoint $\Phi_\alpha$ is also an equivalence. Hence the unit
$
\bar\eta:\Id{\Alg(T)}\Rightarrow K_\alpha\Phi_\alpha
$
is an isomorphism. Applying this to the free algebra $ X$ and using Lemma~\ref{lem:theta-from-unit}, together with the isomorphism $\tau_X$ of Lemma~\ref{lem:realization-free-algebra}, shows that each component
$
\theta_X:TX\to RLX
$
is an isomorphism. Thus $\theta$ is an isomorphism of monads.

By Proposition~\ref{prop:comparison-restriction-canonical}, one has
$
K_\alpha=\Alg(\Id{\mathcal E},\theta) K_{\varepsilon L}.
$
Since $\theta$ is an isomorphism, the restriction functor
$
\Alg(\Id{\mathcal E},\theta):\Alg(RL)\to\Alg(T)
$
is an equivalence. It follows that $K_{\varepsilon L}$ is an equivalence.

Conversely, suppose that $K_{\varepsilon L}$ is an equivalence and that $\theta$ is an isomorphism. Then
$
\Alg(\Id{\mathcal E},\theta):\Alg(RL)\to\Alg(T)
$
is an equivalence. Since
$
K_\alpha=\Alg(\Id{\mathcal E},\theta) K_{\varepsilon L},
$
the functor $K_\alpha$ is an equivalence.
\end{proof}

\begin{rem} \label{rem:descent-less-rigid-than-effective-descent} The preceding theorem is deliberately stated for effective descent rather than for descent alone. The reason is that the fully faithful case does not impose the same rigidity on the comorphism of monads $ \theta:T\to RL$. Indeed, in the situation of Theorem~\ref{thm:monadic-descent-rigidity}, the comparison functor factors as $ K_\alpha=\Alg(\Id{\mathcal E},\theta) K_{\varepsilon L}, $ where $ \Alg(\Id{\mathcal E},\theta):\Alg(RL)\to\Alg(T) $ is restriction of algebra structures along $\theta$. Thus descent for $K_\alpha$ is not only the canonical descent problem associated with the adjunction $L\dashv R$. It also depends on the way in which the generalized descent data encoded by $T$ compare with the canonical descent data encoded by the monad $RL$. In other words, one must also control whether restriction along $\theta$ creates extra morphisms between algebra structures. More precisely, $K_\alpha$ is fully faithful if and only if $K_{\varepsilon L}$ is fully faithful and $\Alg(\Id{\mathcal E},\theta)$ is full on the full subcategory of $\Alg(RL)$ spanned by the canonical objects $ K_{\varepsilon L}(C), \ C\in\mathcal C. $ In particular, a sufficient global condition is that $\theta$ be dense, in the sense that $ \Alg(\Id{\mathcal E},\theta):\Alg(RL)\to\Alg(T) $ is fully faithful. For instance, this holds if each component $ \theta_X:TX\to RLX $ is an epimorphism in $\mathcal E$. This is weaker and less rigid than the effective descent statement. The isomorphism of $\theta$ is detected by the unit of the adjunction $ \Phi_\alpha\dashv K_\alpha $ on free $T$-algebras, as in Lemma~\ref{lem:theta-from-unit}. By contrast, full faithfulness of $K_\alpha$ only forces the counit of $ \Phi_\alpha\dashv K_\alpha $ to be an isomorphism. Thus ordinary descent controls only the embedding of objects of $\mathcal C$ into the category of $T$-descent data, while effective descent additionally forces all $T$-descent data to come from objects of $\mathcal C$. This is why ordinary descent has no direct analogue of Theorem~\ref{thm:monadic-descent-rigidity} with the condition ``$\theta$ is an isomorphism'' simply replaced by a formal injectivity condition. \end{rem}

The preceding theorem is special to the monad induced by the adjunction.
We record why one cannot replace \(RL\) by an arbitrary reference monad.

\begin{rem}\label{rem:noncanonical-reference-monad}
Let $S$ be a monad on $\mathcal E$, let $\beta:LS\Rightarrow L$ be a right $S$-module structure, and let
$
\theta:T\to S
$
be a comorphism of monads over $\mathcal{E}$. Then
$
\alpha=\beta\circ L\theta
$
is a right $T$-module structure on $L$, and the corresponding comparison functors satisfy
$
K_\alpha=\Alg(\Id{\mathcal E},\theta) K_\beta.
$
However, from the fact that $K_\alpha$ is an equivalence one cannot conclude, in general, that $K_\beta$ is an equivalence or that $\theta$ is an isomorphism.

Indeed, take $\mathcal E=\mathcal C=\Set$ and $L=R=\Id{\Set}$. Let $G$ be a non-trivial group, and let
$
S(X)=G\times X
$
be the usual action monad. Let
$
\beta_X:G\times X\to X
$
be the projection. The associated comparison functor
$
K_\beta:\Set\to G\text{-}\Set
$
sends a set to the corresponding set with trivial $G$-action, and is not an equivalence if $G$ is non-trivial. Now take $T=\Id{\Set}$ and let
$
\theta:T\to S
$
be the unit of the action monad, $x\mapsto(e,x)$. Then
$
\alpha=\beta\circ\theta=\Id,
$
so
$
K_\alpha:\Set\to\Set
$
is an equivalence. But $K_\beta$ is not an equivalence, and $\theta$ is not an isomorphism.
\end{rem}

The preceding theorem has the following fibrational form. The point is that the
reference monad is not arbitrary: it is the canonical monad $u\st u_!$
coming from the change-of-base adjunction.

\begin{cor}
\label{cor:fibrewise-monadic-descent-rigidity}
Let $p\:\E\to\C$ be a bifibration, and let $u:X\to Y$ be a morphism in
$\C$. The bifibration yields an adjunction
$
u_!:p^{-1}(X)\rightleftarrows p^{-1}(Y):u\st .
$
Let $T$ be a monad on $p^{-1}(X)$, equipped with a comorphism of monads
$
\theta:T\to u\st u_!
$ over $p^{-1}(X)$. Let
$$
\alpha:
u_!T
\xrightarrow{u_!\theta}
u_!u\st u_!
\xrightarrow{\varepsilon^u u_!}
u_!
$$
be the induced right $T$-module structure on $u_!$, where
$\varepsilon^u:u_!u\st\Rightarrow\Id{p^{-1}(Y)}$ is the counit of
$u_!\dashv u\st$. Assume that the coequalizers defining
$\Phi^p_{u,\alpha}$ exist. Then the following are equivalent:
\begin{enumerate}
\item the fibrational comparison functor
$
K^p_{u,\alpha}:p^{-1}(Y)\to\Alg(T)
$
is an equivalence;
\item the usual comparison functor
$
K_{\varepsilon^u u_!}:p^{-1}(Y)\to\Alg(u\st u_!)
$
of the adjunction $u_!\dashv u\st$ is an equivalence, and
$\theta:T\to u\st u_!$ is an isomorphism of monads.
\end{enumerate}
\end{cor}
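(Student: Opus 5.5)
The plan is to obtain this as a direct specialization of \thex\ref{thm:monadic-descent-rigidity}. I take $\mathcal E=p^{-1}(X)$ and $\mathcal C=p^{-1}(Y)$, with $L=u_!$ and $R=u\st$, and use the adjunction $u_!\dashv u\st$ supplied by the bifibration, with unit $\iota^u$ and counit $\varepsilon^u$. Then the induced monad $RL$ is exactly $u\st u_!$, with unit $\iota^u$ and multiplication $u\st\varepsilon^u u_!$. The given $\theta\:T\to u\st u_!$ is a comorphism of monads over $p^{-1}(X)$, as in the hypotheses of the theorem. The right module $\alpha=(\varepsilon^u u_!)\circ u_!\theta$ is literally the $\alpha$ of the theorem.

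Next I would check that the functors in the statement coincide with those of the theorem. First, $K^p_{u,\alpha}$ from \corx\ref{prop:fibrewise-comparison-realization} sends $A$ to $(u\st A,\rho_A)$, where $\rho$ is the mate of $\alpha$. This is precisely $K_\alpha$ of \defx\ref{def:comparison-functor-module} for the chosen adjunction. Second, $K_{\varepsilon^u u_!}$ is the usual comparison functor of $u_!\dashv u\st$, as in \remx\ref{remclassicalcomparison}. Third, $\Phi^p_{u,\alpha}$ is by definition $\Phi_\alpha$ of \defx\ref{def:Phi-alpha}, computed in the fibre $p^{-1}(Y)$. So the hypothesis that the coequalizers defining $\Phi^p_{u,\alpha}$ exist is exactly the coequalizer hypothesis of \thex\ref{thm:monadic-descent-rigidity}.

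With these identifications, the equivalence of (1) and (2) is the statement of \thex\ref{thm:monadic-descent-rigidity}. For completeness I would recall its mechanism. For (1)$\Rightarrow$(2), the adjunction $\Phi_\alpha\dashv K_\alpha$ of \thex\ref{thm:Phi-adj-K} has invertible unit when $K_\alpha$ is an equivalence. \lemx\ref{lem:realization-free-algebra} and \lemx\ref{lem:theta-from-unit} then show that $\theta_X=R\tau_X\circ U\bar\eta_{FX}$ is invertible. The factorization $K_\alpha=\Alg(\Id{},\theta)K_{\varepsilon L}$ of \prox\ref{prop:comparison-restriction-canonical} then transfers the equivalence to $K_{\varepsilon^u u_!}$. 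For (2)$\Rightarrow$(1), the same factorization gives the converse.

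The only point needing care, and the one I would double-check, is that nothing in \thex\ref{thm:monadic-descent-rigidity} depends on the ambient categories beyond the adjunction and the coequalizers. All the work happens inside the fibres: the coequalizers live in $p^{-1}(Y)$, and the transposition bijections are those of $u_!\dashv u\st$. No Beck--Chevalley or normality assumption on $p$ is required. I would also note that $\theta$ being an isomorphism of monads just means it is componentwise invertible. The inverse of a comorphism of monads automatically satisfies the unit and multiplication axioms, so no additional verification is needed there.
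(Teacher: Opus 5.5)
Your proposal is correct and is essentially the paper's proof: both instantiate Theorem~\ref{thm:monadic-descent-rigidity} with $\mathcal E=p^{-1}(X)$, $\mathcal C=p^{-1}(Y)$, $L=u_!$, $R=u\st$, noting that $RL=u\st u_!$ and that $\alpha$ is $\varepsilon^u u_!$ restricted along $\theta$. Your further checks are accurate and simply make explicit what the paper leaves implicit: that $K^p_{u,\alpha}$ and $\Phi^p_{u,\alpha}$ are literally $K_\alpha$ and $\Phi_\alpha$ computed in the fibres, that no Beck--Chevalley hypothesis is used, and that an invertible comorphism has a comorphism inverse.
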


\begin{proof}
This is Theorem~\ref{thm:monadic-descent-rigidity} applied to the bifibrational context,
with
$$
\mathcal E=p^{-1}(X),
\quad
\mathcal C=p^{-1}(Y),
\quad
L=u_!,
\quad
R=u\st .
$$
The monad $RL$ is then $u\st u_!$, and the canonical right $RL$-module
structure on $L$ is
$
\varepsilon^u u_!:u_!u\st u_!\Rightarrow u_!.
$
Restricting this structure along $\theta:T\to u\st u_!$ gives precisely the
right $T$-module $\alpha$ displayed above. The stated equivalence is therefore
exactly the equivalence of Theorem~\ref{thm:monadic-descent-rigidity}.
\end{proof}

\begin{rem}
    Exactly as in \remx\ref{remappltogenactmndfibr}, the above can be applied to the generalized action monads $\T^p_{\DD}$ on $p^{-1}(D_0)$, relative to $p$, associated to any internal category $\DD$ in $\C$.
\end{rem}

\subsection{The rigidity criterion for internal categories}
\label{subsec:application-internal-categories}

We finish the section by spelling out what the preceding monadic descent criterion says
for generalized action monads associated to internal categories. The point is to make explicit the content of
Theorem~\ref{thm:monadic-descent-rigidity} in a familiar case, rather than proving other new descent theorems. The conclusion
is rather rigid: the comparison from objects of $\C$ to actions of an
internal category $\DD$ is an equivalence only when $\DD$ is the
$\check{\mathrm C}$ech groupoid of its object of objects, and the
corresponding ordinary descent data are effective. In the equivariant case
this recovers the Galois descent criterion for torsors, which reduces Galois effective descent to classical effective descent along a morphism (see subsection \ref{subsec:Galoisrecall}). But such rigidity is not forced in the case of descent, unlike effective descent.

Let
$
\DD=(D_1\rightrightarrows D_0)
$
be an internal category in $\C$, with source and target maps
$
s,t:D_1\to D_0 .
$
We denote by
$
I(D_0)=(D_0\times D_0\rightrightarrows D_0)
$
the $\check{\mathrm C}$ech internal groupoid associated
with the terminal map
$
!_{D_0}:D_0\to 1.
$
Thus the source and target maps of $I(D_0)$ are, respectively,
$\pi_2$ and $\pi_1$. The map
$
\langle t,s\rangle:D_1\to D_0\times D_0
$
therefore defines an internal functor
$
\DD\longrightarrow I(D_0)
$
which is the identity on objects.

We first recall the fibrational form of the criterion. Let $p\:\E\to\C$ be a
bifibration satisfying Beck--Chevalley for pullback squares. By
Construction~\ref{consTDfibrational}, the internal category $\DD$ determines
the monad $\T^p_{\DD}=t_!s\st$ on the fibre $p^{-1}(D_0)$. Applying the same
construction to the $\check{\mathrm C}$ech groupoid $I(D_0)$ gives
$\T^p_{I(D_0)}=(\pi_1)_!(\pi_2)\st$. If $q=!_{D_0}:D_0\to 1$, the
Beck--Chevalley isomorphism for the pullback square
$$
\begin{tikzcd}
D_0\times D_0 \ar[r,"\pi_2"] \ar[d,"\pi_1"']
&
D_0 \ar[d,"q"]
\\
D_0 \ar[r,"q"']
&
1
\end{tikzcd}
$$
identifies this monad with the monad induced by the adjunction
$q_!:p^{-1}(D_0)\rightleftarrows p^{-1}(1):q\st$. Thus
$\T^p_{I(D_0)}\cong q\st q_!$. Under this identification, the internal functor
$\DD\to I(D_0)$ induces a monad comorphism
$\theta^p_{\DD}:\T^p_{\DD}\to q\st q_!$ over $p^{-1}(D_0)$.

The corresponding right $\T^p_{\DD}$-module structure on $q_!$ is
$$
\alpha^p_{\DD}:
q_!\T^p_{\DD}
\xrightarrow{q_!\theta^p_{\DD}}
q_!q\st q_!
\xrightarrow{\varepsilon^q q_!}
q_!,
$$
where $\varepsilon^q:q_!q\st\Rightarrow \Id{p^{-1}(1)}$ is the counit of
$q_!\dashv q\st$. In the notation of
Corollary~\ref{prop:fibrewise-comparison-realization}, the associated comparison
functor is
$K^p_{q,\alpha^p_{\DD}}:p^{-1}(1)\to\Alg(\T^p_{\DD})$. Applying
Theorem~\ref{thm:monadic-descent-rigidity} with
$\mathcal E=p^{-1}(D_0)$, $\mathcal C=p^{-1}(1)$, $L=q_!$, $R=q\st$, and
$T=\T^p_{\DD}$, we obtain the following consequence: assuming that the
coequalizers defining the corresponding realization functor exist,
$K^p_{q,\alpha^p_{\DD}}$ is an equivalence if and only if
$q:D_0\to 1$ is an effective descent morphism with respect to $p$ and
$\theta^p_{\DD}:\T^p_{\DD}\to q\st q_!$ is an isomorphism of monads.

We now specialize to the codomain bifibration. Assume that $\C$ has finite
limits and take $p=\cod:\C^2\to\C$. Then $p^{-1}(D_0)=\slice{\C}{D_0}$, and
the fibrational action monad $\T^p_{\DD}=t_!s\st$ is precisely the generalized action monad $\T_{\DD}:\slice{\C}{D_0}\to\slice{\C}{D_0}$ of
Construction~\ref{consTD}. Explicitly,
$$
\T_{\DD}(X\xrightarrow{\gamma}D_0)
=
\left(
D_1\times_{s,D_0,\gamma}X
\xrightarrow{\pi_1}
D_1
\xrightarrow{t}
D_0
\right).
$$
By Theorem~\ref{thm:action-monad-algebras}, its Eilenberg--Moore category is
the category of internal actions. That is, $\Alg(\T_{\DD})\cong \Act[\C]{\DD}$.

Under the isomorphism $\slice{\C}{1}\iso\C$, the adjunction
$q_!:\slice{\C}{D_0}\rightleftarrows \slice{\C}{1}:q\st$ identifies with
$\dom:\slice{\C}{D_0}\rightleftarrows \C:(D_0\times -)$. The induced monad is
the usual $\check{\mathrm C}$ech descent monad of $D_0\to 1$, namely the
action monad of $I(D_0)$. Hence the map $\DD\to I(D_0)$ gives a monad
comorphism $\theta_{\DD}:\T_{\DD}\to \T_{I(D_0)}$ over $\C/D_0$. At an object
$\gamma:X\to D_0$, its component is
$$
D_1\times_{s,D_0,\gamma}X
\longrightarrow
(D_0\times D_0)\times_{\pi_2,D_0,\gamma}X,
\qquad
(f,x)\longmapsto ((t(f),s(f)),x).
$$

By Corollary~\ref{corollactmndisleftTmod}, the domain functor
$\dom\:\slice{\C}{D_0}\to\C$ carries the corresponding canonical right
$\T_{\DD}$-module structure; denote it by $\alpha_{\DD}$. The associated
comparison functor is therefore
$K_{\alpha_{\DD}}:\C\to\Alg(\T_{\DD})$. Under the equivalence
$\Alg(\T_{\DD})\cong\Act[\C]{\DD}$, this functor sends an object $A\in\C$ to
the canonical $\DD$-action on $D_0\times A\to D_0$.

\begin{cor}
\label{thm:internal-category-descent}
Assume that \(\C\) has finite limits and coequalizers.
Let \(\DD=(D_1\rightrightarrows D_0)\) be an internal category in \(\C\).
Then the following are equivalent:
\begin{enumerate}
\item the comparison functor
$K_{\alpha_{\DD}}:\C\to\Alg(\T_{\DD})$ is an equivalence;
\item the map $D_0\to 1$ is an effective descent morphism and
$\langle t,s\rangle:D_1\to D_0\times D_0$ is an isomorphism.
\end{enumerate}
\end{cor}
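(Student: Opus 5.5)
This corollary follows from Theorem~\ref{thm:monadic-descent-rigidity} applied to $\mathcal E=\slice{\C}{D_0}$, $\mathcal C=\C$ (identified with $\slice{\C}{1}$), $L=\dom=q_!$ and $R=D_0\times -=q\st$, with $q\:D_0\to 1$ the terminal map. The plan is to check the hypotheses of that theorem, and then to translate its two conditions into the two conditions of (2).

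First I will verify that the right $\T_{\DD}$-module $\alpha_{\DD}$ of Corollary~\ref{corollactmndisleftTmod}, whose components are the second projections $D_1\times_{s,D_0,\gamma}X\to X$, equals the composite $\varepsilon^q q_!\circ q_!\theta_{\DD}$. Here $\theta_{\DD}\:\T_{\DD}\to \T_{I(D_0)}\cong q\st q_!$ is the comorphism induced by the internal functor $\langle t,s\rangle\:\DD\to I(D_0)$ via Theorem~\ref{theorTisafunctor}. This is a componentwise check. The map $(f,x)\mapsto((t(f),s(f)),x)$ composed with the identification $(D_0\times D_0)\times_{\pi_2,D_0,\gamma}X\cong D_0\times X$ gives $(f,x)\mapsto (t(f),x)$. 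Applying $\dom$ and then the counit, which projects $D_0\times X\to X$, yields $(f,x)\mapsto x$, which is $\alpha_{\DD,\gamma}$.

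I also need that $\theta_{\DD}$ is a comorphism of monads over $\slice{\C}{D_0}$ into the monad $RL$, with its adjunction-induced structure. This follows from functoriality of $\T$ together with the Beck--Chevalley identification $\T_{I(D_0)}\cong q\st q_!$ recalled just before the statement. I expect the main technical point to be checking that this identification respects units and multiplications, so that the composite is genuinely a comorphism into $RL$. I would argue it directly: both monads send $\gamma$ to $D_0\times X\to D_0$. The unit $\langle\gamma,\id{}\rangle$ and the multiplication, which forgets the middle coordinate, visibly agree.

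Coequalizers exist in $\C$ by hypothesis, so the realization functor $\Phi_{\alpha_{\DD}}$ exists. Theorem~\ref{thm:monadic-descent-rigidity} then gives that (1) holds if and only if two conditions hold: the comparison $K_{\varepsilon L}\:\C\to\Alg(q\st q_!)$ is an equivalence, and $\theta_{\DD}$ is an isomorphism. The first condition is, by definition and \S\ref{subsec:recovclassicaldescent}, exactly that $D_0\to 1$ is an effective descent morphism.

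It remains to show that $\theta_{\DD}$ is an isomorphism if and only if $\langle t,s\rangle$ is an isomorphism. If $\langle t,s\rangle$ is an isomorphism, then each component $\langle t,s\rangle\times_{D_0}\id{X}$ is an isomorphism, being a pullback of an isomorphism. Conversely, evaluate $\theta_{\DD}$ at $\gamma=\id{D_0}$. The component is the map $D_1\times_{s,D_0,\id{}}D_0\to (D_0\times D_0)\times_{\pi_2,D_0,\id{}}D_0$, which is $\langle t,s\rangle$ up to the canonical isomorphisms, so it is an isomorphism. Combining the two steps gives (1)$\iff$(2).
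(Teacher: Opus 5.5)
Your proposal is correct and follows essentially the same route as the paper. Both apply Theorem~\ref{thm:monadic-descent-rigidity} with $L=\dom$ and $R=D_0\times -$, read the first resulting condition as effective descent of $D_0\to 1$, and show $\theta_{\DD}$ is invertible if and only if $\langle t,s\rangle$ is, using pullback stability in one direction and evaluation at $\id{D_0}$ in the other. Your explicit checks that $\alpha_{\DD}=\varepsilon q_!\circ q_!\theta_{\DD}$ and that $\T_{I(D_0)}\cong q\st q_!$ respects units and multiplications fill in details the paper leaves implicit in its preceding discussion.
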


\begin{proof}
The preceding discussion in the bifibrational context, applied to the codomain bifibration,
shows that $K_{\alpha_{\DD}}$ is an equivalence if and only if the ordinary descent
comparison for $D_0\to 1$ is an equivalence and
$
\theta_{\DD}:\T_{\DD}\to\T_{I(D_0)}
$
is an isomorphism of monads. The first condition is exactly that
$D_0\to 1$ is an effective descent morphism.

It remains to identify the second condition. If
$
\langle t,s\rangle:D_1\to D_0\times D_0
$
is an isomorphism, then every component of $\theta_{\DD}$ is obtained from
it by pullback, and is therefore an isomorphism. Conversely, evaluating
$\theta_{\DD}$ at the object
$
\id{D_0}:D_0\to D_0
$
of $\slice{\C}{D_0}$ gives precisely the map
$
\langle t,s\rangle:D_1\to D_0\times D_0.
$
Thus $\theta_{\DD}$ is an isomorphism if and only if
$\langle t,s\rangle$ is an isomorphism. The result follows.
\end{proof}

\begin{rem}
\label{rem:internal-category-rigidity}
Corollary~\ref{thm:internal-category-descent} says that the comparison
functor $K_{\alpha_{\DD}}:\C\to\Alg(\T_{\DD})$ is an equivalence only in the
$\check{\mathrm C}$ech case. More precisely, $\DD$ must be isomorphic over
$D_0$ to $I(D_0)=(D_0\times D_0\rightrightarrows D_0)$, and the ordinary
descent data associated with $D_0\to 1$ must be effective. Under the
identification $\Alg(\T_{\DD})\cong\Act[\C]{\DD}$, this means that every
$\DD$-action is, up to isomorphism, a canonical action on an object of the
form $D_0\times A\to D_0$. If $\DD$ is not the $\check{\mathrm C}$ech
groupoid, then a general $\DD$-action need not come from such a global object
$A$.
\end{rem}

We now specialize the preceding discussion to action groupoids. This is the
case considered in the Galois descent theorem of
\cite{MarquesGaloisDescent}. The result below is therefore not intended as a
new proof of that theorem, but as its reinterpretation through the monadic
criterion developed here. More precisely, the torsor map identifies the
action groupoid with the corresponding $\check{\mathrm C}$ech groupoid, while
the effective descent condition on the structure map accounts for the usual
descent hypothesis.

Let $G$ be a group object in
$\C$, and let
$
h:G\times E\to E
$
be a left action. The corresponding action groupoid is
$
G\ltimes E=(G\times E\rightrightarrows E),
$
with source $\pi_2$ and target $h$. Its action monad on
$\slice{\C}{E}$ is given by
$$
\T_{G\ltimes E}(X\xrightarrow{\gamma}E)
=
\left(
G\times X
\xrightarrow{h(\id{G},\gamma)}
E
\right),
$$
where we use the canonical identification
$(G\times E)\times_E X\cong G\times X$. This is the usual Galois monad
associated with the $G$-object $(E,h)$.

The canonical internal functor $\DD\to I(E)$ is the identity on objects and
is given on arrows by the torsor map
$\langle h,\pi_2\rangle:G\times E\to E\times E.$. Hence the induced monad
comorphism $\theta_{\DD}:\T_{\DD}\to\T_{I(E)}$ is precisely the monadic form of
the torsor map.

\begin{cor}
\label{cor:galois-descent-torsor}
Let $G$ be a group object in $\C$, let $(E,h)$ be a $G$-object, and put
$\DD=G\ltimes E$. Then the comparison functor
$
K_{\alpha_{\DD}}:\C\to\Alg(\T_{\DD})
$
is an equivalence if and only if $(E,h)$ is a $G$-torsor, that is, if and
only if $!_E:E\to 1 $ is an effective descent morphism and
$\langle h,\pi_2\rangle\:G\times E\to E\times E$ is an isomorphism.
\end{cor}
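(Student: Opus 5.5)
The plan is to deduce the statement directly from Corollary~\ref{thm:internal-category-descent}, applied to the internal groupoid $\DD=G\ltimes E$, with object of objects $D_0=E$, source $s=\pi_2$ and target $t=h$. We work under the standing assumptions of that corollary: $\C$ has finite limits and coequalizers.

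Under these assumptions, Corollary~\ref{thm:internal-category-descent} says that $K_{\alpha_{\DD}}$ is an equivalence if and only if two conditions hold. First, $!_E\:E\to 1$ must be an effective descent morphism. Second, $\langle t,s\rangle\:D_1\to D_0\times D_0$ must be an isomorphism. The only remaining work is to identify this second map for the action groupoid. Here $D_1=G\times E$, and by definition $\langle t,s\rangle=\langle h,\pi_2\rangle\:G\times E\to E\times E$. This is precisely the torsor map, as already noted in the discussion preceding the statement. So condition (2) of Corollary~\ref{thm:internal-category-descent} reads: $!_E$ is effective descent and $\langle h,\pi_2\rangle$ is invertible.

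To finish, I would check that this pair of conditions matches the notion of $G$-torsor intended in the statement. The statement defines a torsor exactly as the conjunction of these two conditions, so in that reading the corollary follows immediately. I would also remark that it agrees with the pseudotorsor/effective-descent characterization recalled in Subsection~\ref{subsec:Galoisrecall} from \cite[{\thex{3.7}}]{MarquesGaloisDescent}.

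Two points need checking. One is that the identification $(G\times E)\times_E X\cong G\times X$ used to present $\T_{G\ltimes E}$ does not change the comparison functor, beyond a natural isomorphism that is harmless for being an equivalence. The other is that the internal functor $\DD\to I(E)$ used in Corollary~\ref{thm:internal-category-descent} is indeed the one with arrow component $\langle h,\pi_2\rangle$, which requires care with the source and target conventions of $I(E)$, namely $\pi_2$ and $\pi_1$. I expect the only real obstacle to be bookkeeping these conventions, so that the torsor map appears as $\langle h,\pi_2\rangle$ rather than $\langle \pi_2,h\rangle$. Even then, the two differ by the swap isomorphism of $E\times E$, so the invertibility condition is unaffected.
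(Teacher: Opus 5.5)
Your proposal is correct and is exactly the paper's argument: apply Corollary~\ref{thm:internal-category-descent} to $\DD=G\ltimes E$ with $D_0=E$, $D_1=G\times E$, $s=\pi_2$ and $t=h$, so that $\langle t,s\rangle$ is literally the torsor map $\langle h,\pi_2\rangle$. Your extra bookkeeping is consistent with the paper and only makes explicit what it leaves tacit: the source/target conventions of $I(E)$, the identification $(G\times E)\times_E X\cong G\times X$, and carrying over the finite-limits-and-coequalizers hypothesis that the statement leaves implicit.
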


\begin{proof}
Apply Corollary~\ref{thm:internal-category-descent} to
$\DD=G\ltimes E$. In this case
$D_0=E$, $D_1=G\times E$, $s=\pi_2$, and $t=h$. Hence the map
$\langle t,s\rangle\to D_0\times D_0$ is exactly the torsor map
$\langle h,\pi_2\rangle\:G\times E\to E\times E$. The result follows.
\end{proof}

\begin{rem}
The preceding recovery of Galois descent can be read from the commutative triangle
given by the canonical internal functors $G\ltimes E\to I(E)\to 1$.
Since the terminal internal category $1=(1\rightrightarrows 1)$ has identity
action monad, we have $\Alg(\T_1)\simeq\C$, and functoriality gives
$$
\begin{tikzcd}
\Alg(\T_1)
\ar[r]
\ar[rr,bend right=15,swap]
&
\Alg(\T_{I(E)})
\ar[r]
&
\Alg(\T_{G\ltimes E}).
\end{tikzcd}
$$
The first arrow is ordinary $\check{\mathrm C}$ech descent for $E\to 1$. The
second is restriction along $\theta_{\DD}:\T_{\DD}\to\T_{I(E)}$, equivalently
the comparison induced by the internal functor $G\ltimes E\to I(E)$. The
torsor condition says that this second functor is an isomorphism of internal
categories, while effective descent of $E\to 1$ says that the first arrow is
an equivalence. Thus the usual Galois descent criterion is exactly the
specialization of Theorem~\ref{thm:monadic-descent-rigidity} to this
triangle.
\end{rem}

\section{Further examples}\label{sec:ex}

The following examples further illustrate descent and effective descent for generalized action monads associated with internal categories $\DD$ in a category $\C$, relative to a bifibration $p\:\E\to \C$.

In subsections \ref{subsec:recovclassicaldescent} and \ref{subsec:recovGaloisdesc}, we showed that our theory captures and gives new insights to both the classical descent along a morphism and Galois descent. We now show some further examples.

In all subsections below, we will follow the same pattern. We start with a
bifibration $p\:\E\to\C$.  An internal category
$\DD=(D_1\rightrightarrows D_0)$ in the base then induces the generalized action monad
$\T^p_{\DD}=t_!s\st$ on the fibre over $D_0$, relative to $p$.  In each case, the operation
$s\st$ pulls an object of the fibre to the arrows of $\DD$, while $t_!$
collects it again over the targets.  A $\T^p_{\DD}$-algebra is therefore an
object over $D_0$ equipped with an action of the arrows of $\DD$. The induced generalized action monads then yield the structure of a left adjoint right $\T^p_{\DD}$-module, and thus associated comparison and realization functors. Whence we can then explore descent and effective descent.

\subsection{Internal categories in $\Set$}
    Consider $\C=\Set$ and the codomain bifibration over $\Set$. As recalled in \exax\ref{exaactionsofinternalcatinSet}, actions of an internal category $\DD$ in $\Set$ (which is just a category) coincide with copresheaves on $\DD$. \thex\ref{thm:action-monad-algebras} then provides us with a monad $\T_{\DD}\:\Set/{D_0}\to \Set/{D_0}$ whose Eilenberg--Moore algebras are copresheaves on $\DD$. Explicitly, $\T_{\DD}$ sends a function $\gamma\:C\to D_0$ to the function
    \begin{fun}
 	\T_{\DD}(\gamma) & \: & D_1\x[D_0]C \hphantom{CC} & \too & {D_0}\\[1ex]
    && (X\in C,\gamma(X)\aar{f}_{\DD} Z) & \mto & Z
    \end{fun}
    An algebra for $\T_{\DD}$ consists of a pair $(\gamma\:C\to D_0, \xi\:\T_{\DD}(\gamma)\to \gamma)$ satisfying the relevant axioms. The corresponding copresheaf on $\DD$ is
    \begin{fun}
 	(\gamma,\xi) & \: & \DD & \too & \Set\\[1ex]
    && \fib{Y}{f}{Z} & \mto & \fib{\gamma^{-1}(Y)}{\xi(-,f)}{\gamma^{-1}(Z)}
    \end{fun}
    Interestingly, the axioms of algebra precisely match the functoriality conditions for $(\gamma,\xi)$.

    Following \defx\ref{def:comparison-functor-module} and \remx\ref{rem:comparison-functor-as-em-functor}, it is straightforward to show that the comparison functor associated to the generalized action monad $\T_{\DD}$ is
    \begin{fun}
 	K & \: & \Set & \too & \Alg(\T_{\DD})\\[1ex]
    && \fib{C}{h}{C'} & \mto & \fib{({D_0\x C}\aar{\pi_1}{D_0}, \, D_1\x[D_0](D_0\x C)\aar{t\x \pi_2}D_0\x C)}{D_0\x h}{({D_0\x C'}\aar{\pi_1}{D_0}, \, D_1\x[D_0](D_0\x C')\aar{t\x \pi_2}D_0\x C')}
    \end{fun}
    It is straightforward to show that $K(C)=({D_0\x C}\aar{\pi_1}{D_0}, \, D_1\x[D_0](D_0\x C)\aar{t\x \pi_2}D_0\x C)$ corresponds to a copresheaf on $\DD$ that is constant at $C$. So that $K$ is actually equivalent to the constant diagram functor $\Set\to [\DD,\Set]$.
    
    Using \defx\ref{def:Phi-alpha}, the realization functor $\Phi\:\Alg(\T_{\DD})\to \Set$, which is the left adjoint of $K$, is then built as follows. Given an algebra $(\gamma\:C\to D_0, \xi\:\T_{\DD}(\gamma)\to \gamma)$, which is precisely a copresheaf $\DD\to \Set$, $\Phi(\gamma,\xi)$ is the coequalizer in $\Set$
    \begin{cd}
    \dom(\T_{\DD}(\gamma)) \ar[r, shift left=.5ex, "\dom(\xi)"]
    \ar[r, shift right=.5ex, swap, "\alpha_\gamma"]
    \&
    \dom(\gamma) \ar[r, "q_{(\gamma,\xi)}"] \&
    \dom\otimes_{\T_{\DD}}(\gamma,\xi)=\Phi(\gamma,\xi)
    \end{cd}
    That is, the coequalizer in $\Set$
    \begin{cd}
    D_1\x[D_0]C \ar[r, shift left=.5ex, "\dom(\xi)"]
    \ar[r, shift right=.5ex, swap, "{\pi_2}"]
    \&
    C \ar[r, "q_{(\gamma,\xi)}"] \&
    \Phi(\gamma,\xi)
    \end{cd}
    By the explicit construction of coequalizers in $\Set$, we get that $\Phi(\gamma,\xi)$ is the quotient of $C$ by the smallest equivalence relation generated by 
    $$\xi(X,f)\simeq X \text{ for every } (X\in C,\gamma(X)\aar{f}_{\DD} Z)\in D_1\x[D_0] C$$ 

    This precisely coincides with the set $\pi_0(\Groth{(\gamma,\xi)})$ of connected components of the Grothendieck construction of the copresheaf $(\gamma,\xi)\:\DD\to \Set$. It is known that this is also the recipe to build the colimit of the copresheaf functor $(\gamma,\xi)\:\DD\to \Set$. After all, $K$ is the constant diagram functor $\Set\to [\DD,\Set]$, so its left adjoint must indeed be the functor that takes colimits.

    Finally, we study which categories $\DD$, seen as internal categories in $\Set$ or equivalently as their corresponding generalized action monads $\T_{\DD}$, satisfy descent or effective descent.
    
    By the above, $\DD$ satisfies descent precisely when the constant diagram functor $\Delta\:\Set\to [\DD,\Set]$ is fully faithful. This means that, given any $A,B\in \Set$, the function
    $$\HomC{\Set}{A}{B}\to \HomC{[\DD,\Set]}{\Delta A}{\Delta B}$$
    is bijective. But a natural transformation $\Delta A\aR{} \Delta B$ is precisely a collection of as many functions $A\to B$ as the number of connected components of $\DD$. The bijectivity of the function above then translates precisely into the request that $\DD$ has precisely one connected component. So the categories $\DD$ that satisfy descent are precisely the non-empty connected categories.

    A category $\DD$ then satisfies effective descent precisely when the constant diagram functor $\Delta:\Set\to [\DD,\Set]$ is an equivalence. This condition implies that any representable $\Hom{X}{-}\:\DD\to \Set$ is isomorphic to a constant copresheaf, say $\Delta W$. Now by the Yoneda lemma, given any $B\in \Set$,
    $$B=\Delta B (X)\iso \HomC{[\DD,\Set]}{\Hom{X}{-}}{\Delta B}\iso \HomC{[\DD,\Set]}{\Delta W}{\Delta B}\iso \HomC{\Set}{W}{B}$$
    where the last isomorphism is given by the assumption that $\Delta$ is fully faithful. This implies that $W$ is a singleton. So every category $\DD$ that satisfies effective descent must be such that for every $X,Y\in \DD$ the homset between $X$ and $Y$ has exactly one element. This condition translates precisely into $\DD$ being equivalent to the singleton category $\1$. We conclude that the categories $\DD$ that satisfy effective descent are precisely the categories equivalent to $\1$. Notice that \corx\ref{thm:internal-category-descent} offers an alternative quicker way to determine which categories $\DD$ satisfy effective descent. Indeed, it immediately forces all such $\DD$ to have that $\langle t,s\rangle:D_1\to D_0\times D_0$ is a bijection. And the homset between $X$ and $Y$ in $\DD$ is precisely given by the fibre of $\langle t,s\rangle$ over $(Y,X)$.
    
    This example also shows how internal categories satisfying descent can be far away from \vvv{C}ech groupoids. So the rigidity criterion of \corx\ref{thm:internal-category-descent} does not similarly hold for descent, as opposed to effective descent.

\subsection{The families bifibration}

This example generalizes the previous one. Let $\A$ be a category with the coproducts needed below. We consider the families bifibration $p\:\operatorname{Fam}(\A)\to\Set$, which has fibre over a set $I$ equal
to $\A^I$. Thus an object over $I$ is an $I$-indexed family $A=(A_i)_{i\in I}$ of objects of $\A$. It is well known that, when $\A=\Set$, the families bifibration is equivalent to the codomain bifibration over $\Set$.

For a function $u:I\to J$, reindexing with respect to $p$ is precomposition:
$u\st:\A^J\to\A^I$, with $(u\st B)_i=B_{u(i)}$.  Its left adjoint, when the
required coproducts exist, is given by summing over the fibres of $u$:
$$(u_!A)_j=\coprod_{i\in u^{-1}(j)}A_i.$$
The Beck--Chevalley condition follows from the canonical identification of
fibres in pullback squares of sets.

Let now $\DD$ be a small category, viewed as an internal category in $\Set$.
We write $D_0$ for its set of objects, $D_1$ for its set of arrows, and
$s,t:D_1\to D_0$ for source and target.  For a family
$A=(A_x)_{x\in D_0}$, the monad $\T^p_{\DD}=t_!s\st$ is computed as follows:
$s\st A$ assigns to an arrow $f:y\to x$ the object $A_y$, and $t_!$ then sums
over all arrows with fixed target.  Hence
$$(\T^p_{\DD}A)_x=\coprod_{f:y\to x}A_y.$$

A $\T^p_{\DD}$-algebra structure on $A$ is a morphism
$\T^p_{\DD}A\to A$ in $\A^{D_0}$.  Componentwise, this is the same as giving,
for every arrow $f:y\to x$ of $\DD$, a morphism $\alpha_f:A_y\to A_x$.  The
unit and multiplication axioms say respectively that
$\alpha_{\id{x}}=\id{A_x}$ and $\alpha_{gf}=\alpha_g\circ\alpha_f$ for
composable arrows $f:y\to x$ and $g:x\to z$.  Therefore a
$\T^p_{\DD}$-algebra is exactly a functor $\DD\to\A$, and
$$\Alg(\T^p_{\DD})\cong\A^\DD.$$
Thus, in the families bifibration, the $\DD$-descent data are precisely the
functors $\DD\to\A$.

The comparison functor comes from the unique map $D_0\to 1$.  Since the fibre
over $1$ is equivalent to $\A$, it is the constant-diagram functor
$\Delta:\A\to\A^\DD$.  It sends $B\in\A$ to the functor with value $B$ at
every object of $\DD$ and with every arrow acting as $\id{B}$.

Descent means that $\Delta$ is fully faithful.  If $\DD$ is non-empty and
connected, this holds: a natural transformation $\eta:\Delta B\to\Delta C$
is a family of morphisms $\eta_x:B\to C$, and naturality along any arrow
$f:x\to y$ gives $\eta_x=\eta_y$.  Since any two objects are joined by a
zigzag, all components of $\eta$ are equal; since $\DD$ is non-empty, such a
natural transformation is uniquely determined by one morphism $B\to C$. Assuming that in $\DD$ there are at least two objects with at least two arrows from one to the other one, the same argument as the previous example shows that also the converse holds: if $\DD$ satisfies descent then $\DD$ must be non-empty and connected.

Effective descent is stronger: it asks that $\Delta$ be an equivalence.  This
is not true for an arbitrary connected category, since a general diagram need
not be isomorphic to a constant one.  For the \vvv{C}ech groupoid $I(D_0)$ of
the map $D_0\to 1$, however, it is true when $D_0\neq\varnothing$.  Indeed,
$I(D_0)$ has one arrow $y\to x$ for every pair $x,y\in D_0$.  A functor
$I(D_0)\to\A$ is therefore a family $(A_x)$ together with coherent
isomorphisms $A_y\cong A_x$.  Choosing one object $x_0\in D_0$ identifies the
whole datum with the constant diagram on $A_{x_0}$.  Hence
$\Delta:\A\to\A^{I(D_0)}$ is an equivalence whenever $D_0$ is non-empty.

\subsection{Subobjects}

Let $\C$ be a regular category.  The subobject fibration
$p\:\operatorname{Sub}(\C)\to\C$ has as fibre over $X$ the poset
$\operatorname{Sub}(X)$ of subobjects of $X$.  Thus an object over $X$ is a
subobject $A\leq X$.

For a map $u:X\to Y$, reindexing is pullback:
$u\st:\operatorname{Sub}(Y)\to\operatorname{Sub}(X)$.  Thus, if $B\leq Y$ is
represented by a monomorphism $B\hookrightarrow Y$, then $u\st B\leq X$ is
the subobject represented by its pullback along $u$.  In $\Set$ this is the
ordinary inverse image $u^{-1}(B)$.  The left adjoint $u_!$ is regular image:
it sends $A\leq X$ to the image of $A\to X\xrightarrow{u}Y$.  The
Beck--Chevalley condition is the stability of regular images under pullback.

Let $\DD=(D_1\rightrightarrows D_0)$ be an internal category in $\C$.  For a
subobject $A\leq D_0$, the monad $\T^p_{\DD}=t_!s\st$ sends $A$ to the
subobject of $D_0$ obtained as follows: first pull $A$ back along
$s:D_1\to D_0$, giving the subobject of arrows whose source lies in $A$; then
take the regular image along $t:D_1\to D_0$.  Elementwise, in $\Set$,
$$\T^p_{\DD}(A)=\{x\in D_0\mid \text{there is an arrow }f:y\to x\text{ with }y\in A\}.$$

Since the fibres are posets, a $\T^p_{\DD}$-algebra structure on $A$ is just
an inequality $\T^p_{\DD}(A)\leq A$.  This says that $A$ is closed under the
arrows of $\DD$: whenever $y\in A$ and $f:y\to x$ is an arrow, then $x\in A$.
Because identity arrows give $A\leq\T^p_{\DD}(A)$, the algebra condition is
equivalent to $\T^p_{\DD}(A)=A$.

Thus, in the subobject fibration, the $\DD$-descent data are the invariant
subobjects of $D_0$.  In $\Set$, these are exactly the subsets
$A\subseteq D_0$ closed under the arrow relation of $\DD$.

The comparison functor for $D_0\to 1$ is
$\operatorname{Sub}(1)\to\Alg(\T^p_{\DD})$.  In $\Set$, the poset
$\operatorname{Sub}(1)$ has two elements, and their pullbacks to $D_0$ are
$\varnothing\subseteq D_0$ and $D_0\subseteq D_0$.  Hence effective descent
means that the only invariant subsets of $D_0$ are $\varnothing$ and $D_0$,
and one also needs $D_0\neq\varnothing$ so that these two pullbacks are
distinct.

For the \vvv{C}ech groupoid $I(D_0)$, this condition holds whenever
$D_0\neq\varnothing$.  If $A\subseteq D_0$ is non-empty and invariant, choose
$y\in A$.  For every $x\in D_0$ there is an arrow $y\to x$ in $I(D_0)$, so
invariance forces $x\in A$.  Hence $A=D_0$.  Thus, in $\Set$, the subobject
fibration has effective descent along $I(D_0)$ exactly when $D_0$ is
non-empty.

\subsection{Sheaves and gluing data}

Let $\mathsf{Spaces}$ be a suitable category of spaces.  We consider the
fibration $p\:\mathsf{Sh}\to\mathsf{Spaces}$ whose fibre over a space $X$ is
$\operatorname{Sh}(X)$.  Reindexing along $u:X\to Y$ is inverse image of
sheaves, $u\st=u^{-1}:\operatorname{Sh}(Y)\to\operatorname{Sh}(X)$.  We work
with maps for which the needed left adjoints $u_!$ and Beck--Chevalley
isomorphisms are available.

Let $\DD=(D_1\rightrightarrows D_0)$ be an internal category in
$\mathsf{Spaces}$.  For a sheaf $F$ on $D_0$, the action monad is
$\T^p_{\DD}F=t_!s\st F$.  A $\T^p_{\DD}$-algebra is a morphism
$t_!s\st F\to F$, or equivalently, by $t_!\dashv t\st$, a morphism of sheaves
on $D_1$
$$a:s\st F\longrightarrow t\st F.$$
This is the precise form of the action: $s\st F$ is the pullback of $F$ along
the source map, while $t\st F$ is the pullback along the target map.

The algebra axioms say that $a$ is compatible with the internal category
structure.  If $e:D_0\to D_1$ is the identity map, then $e\st a=\id{F}$.  If
$D_2=D_1\times_{D_0}D_1$ is the space of composable pairs, with projections
$p_1,p_2:D_2\to D_1$ and composition $m:D_2\to D_1$, then
$$m\st a=p_1\st a\circ p_2\st a.$$
Thus a $\T^p_{\DD}$-algebra is a sheaf on $D_0$ with a coherent action of
$\DD$.  When $\DD$ is a groupoid, these transition morphisms are
isomorphisms.

This recovers the usual gluing data for sheaves.  Suppose that we have spaces
$U_i$ and prescribed overlaps $U_{ij}$ with maps
$p^i_{ij}:U_{ij}\to U_i$ and $p^j_{ij}:U_{ij}\to U_j$.  Assume that these
data form an internal category with $D_0=\coprod_iU_i$ and
$D_1=\coprod_{i,j}U_{ij}$, where on $U_{ij}$ the source is $p^j_{ij}$ and
the target is $p^i_{ij}$.  Thus $U_{ij}$ is the space of arrows from the
$j$-th piece to the $i$-th piece.

A sheaf on $D_0$ is the same as a family of sheaves $F_i$ on the pieces
$U_i$.  Restricting $a:s\st F\to t\st F$ to $U_{ij}$ gives
$$a_{ij}:(p^j_{ij})\st F_j\longrightarrow (p^i_{ij})\st F_i.$$
The unit axiom says that the transition over $U_{ii}$ is the identity along
the prescribed identity map $U_i\to U_{ii}$, and the multiplication axiom is
the cocycle condition: on triple overlaps, going directly from $F_k$ to $F_i$
agrees with going first from $F_k$ to $F_j$ and then from $F_j$ to $F_i$.

In the classical algebraic-geometric situation, $U_{ij}=U_i\times_XU_j$ and
$p^i_{ij},p^j_{ij}$ are the two projections.  Then the maps $a_{ij}$ are the
usual isomorphisms between the restrictions of $F_j$ and $F_i$ to the
overlap, satisfying the identity and cocycle conditions.  The present
formulation is slightly more general: it also allows prescribed overlaps, as
long as they form an internal category.

If the pieces lie over a space $X$, say $q_i:U_i\to X$, and if the prescribed
overlaps are compatible with these maps in the sense that
$q_ip^i_{ij}=q_jp^j_{ij}$, then every sheaf $G$ on $X$ gives local sheaves
$F_i=q_i\st G$.  The equality $q_ip^i_{ij}=q_jp^j_{ij}$ canonically identifies
$(p^i_{ij})\st q_i\st G$ with $(p^j_{ij})\st q_j\st G$, and hence gives the
transition isomorphisms.  This defines the comparison functor
$\operatorname{Sh}(X)\to\Alg(\T^p_{\DD})$.

Descent means that this comparison functor is fully faithful.  Effective
descent means that it is an equivalence: every compatible system of local
sheaves on the $U_i$, with gluing isomorphisms on the overlaps $U_{ij}$,
comes from a unique sheaf on $X$, up to unique isomorphism.

\subsection{Modules and quasi-coherent modules}

We finish with the algebraic version.  The clean affine formulation is over
commutative rings.  Let $\mathsf{CRing}$ denote the category of commutative
rings, and consider the module bifibration over $\mathsf{CRing}$ whose fibre
over a ring $R$ is $R\operatorname{-Mod}$.  For a ring homomorphism
$u:R\to S$, reindexing is restriction of scalars,
$u\st:S\operatorname{-Mod}\to R\operatorname{-Mod}$, and its left adjoint is
extension of scalars, $u_!(M)=S\otimes_RM$.  The Beck--Chevalley isomorphism
is the usual tensor-product base-change isomorphism for pushout squares of
rings.

Thus, for an internal category $\DD=(D_1\rightrightarrows D_0)$ in rings, the
monad $\T^p_{\DD}=t_!s\st$ means: first restrict a module along the source
ring map, and then extend scalars along the target ring map.  Its algebras
are modules over $D_0$ equipped with compatible transition maps along the
arrows of $\DD$.

Now let $R\to S$ be a ring homomorphism.  Geometrically, this corresponds to
$q:\operatorname{Spec}S\to\operatorname{Spec}R$, whose \vvv{C}ech groupoid has
arrow scheme $\operatorname{Spec}(S\otimes_RS)$.  Algebraically, the two
projections correspond to the two maps $p_1,p_2:S\to S\otimes_RS$, given by
$p_1(s)=s\otimes1$ and $p_2(s)=1\otimes s$.

The descent category $\operatorname{Desc}(S/R)$ has as objects $S$-modules
$N$ together with an isomorphism
$$\alpha:(S\otimes_RS)\otimes_{S,p_2}N\longrightarrow (S\otimes_RS)\otimes_{S,p_1}N$$
which restricts to the identity after $S\otimes_RS\to S$ and satisfies the
cocycle condition after base change to $S\otimes_RS\otimes_RS$.  This is the
usual algebraic descent datum for modules.

The comparison functor is extension of scalars with its canonical descent
datum:
$$R\operatorname{-Mod}\longrightarrow\operatorname{Desc}(S/R),\qquad M\longmapsto S\otimes_RM.$$
Descent means that this functor is fully faithful: morphisms of $R$-modules
are recovered from their scalar extensions, provided the extended morphisms
respect the descent data.  Effective descent means that it is an equivalence:
every $S$-module with descent datum is, up to unique isomorphism, of the form
$S\otimes_RM$.  If $S$ is faithfully flat over $R$, this comparison functor
is an equivalence; this is the classical faithfully flat descent theorem for
modules.

The quasi-coherent version is the same statement in geometric language.  For
a scheme $X$, the fibre is $\operatorname{QCoh}(X)$, and for a morphism
$u:X\to Y$ the reindexing functor is pullback $u\st$.  In affine schemes,
$X=\operatorname{Spec}R$ and $U=\operatorname{Spec}S$, this pullback is exactly
extension of scalars $M\mapsto S\otimes_RM$.  Thus the ring calculation above
is the affine form of descent for quasi-coherent modules.  More generally,
for a morphism $q:U\to X$, the \vvv{C}ech groupoid $I(q)$ gives the usual
descent datum: a quasi-coherent module $F$ on $U$ together with an
isomorphism $\pi_2\st F\to\pi_1\st F$ on $U\times_XU$, satisfying the identity
and cocycle conditions.  Effective descent says that such data are exactly
quasi-coherent modules on $X$; for instance this holds in the standard
faithfully flat setting.

\subsection*{Acknowledgements}
We thank Rui Prezado for his help in finding connections between our theory and the existing literature in descent theory. We also thank James Deikun for pointing out to us the links between our work and the theory developed by Leinster for $T$-multicategories.

\end{document}